\newtheorem{thm}{Theorem}
\DeclarePairedDelimiter\floor{\lfloor}{\rfloor}
\title{Recovering Conductances of Resistor Networks in a Punctured Disk}
\author{Yulia Alexandr\thanks{Wesleyan University, CT} \and Brian Burks\thanks{University of California, Berkeley, CA}\and Sunita Chepuri\thanks{University of Minnesota, Twin Cities, MN} \and Patricia Commins\thanks{Carleton College, MN}}
\begin{document}

\maketitle

\theoremstyle{definition}
\newtheorem{exc}{Exercise}
\newtheorem{prob}{Problem}
\newtheorem{lemma}{Lemma}
\newtheorem{ex}{Example}
\newtheorem{cor}{Corollary}
\newtheorem{rem}{Remark}

\newtheorem{defn}{Defnition}

\newtheorem{conj}{Conjecture}

\newcommand{\QQ}{\mathbb{Q}}
\newcommand{\ZZ}{\mathbb{Z}}
\newcommand{\NN}{\mathbb{N}}
\newcommand{\RR}{\mathbb{R}}
\newcommand{\CC}{\mathbb{C}}
\begin{abstract}
    \noindent The response matrix of a resistor network is the linear map from the potential at the boundary vertices to the net current at the boundary vertices.  For circular planar resistor networks, Curtis, Ingerman, and Morrow have given a necessary and sufficient condition for recovering the conductance of each edge in the network uniquely from the response matrix using local moves and medial graphs. We generalize their results for resistor networks on a punctured disk. First we discuss additional local moves that occur in our setting, prove several results about medial graphs of resistor networks on a punctured disk, and define the notion of $z$-sequences for such graphs.  We then define certain circular planar graphs that are electrically equivalent to standard graphs and turn them into networks on a punctured disk by adding a boundary vertex in the middle.  We prove such networks are recoverable and are able to generalize this result to a much broader family of networks. A necessary condition for recoverability is also introduced.
    
\end{abstract}

\section{Introduction}

In this paper, we study resistor networks, electrical networks made up of only resistors.  The electrical properties of these networks is characterized by the response matrix.  The response matrix gives the linear map from a potential assignment at each boundary vertex to the net current flow at each vertex.  A common question in the study of electrical networks is when we can uniquely recover the conductances of a resistor network given the response matrix.  This is called the Dirichlet-to-Neumann problem.

This problem has been extensively studied for a class of networks called circular planar resistor networks.  Curtis, Moores, and Morrow \cite{curtis_mooers_morrow} developed an algorithm to recover the conductances of a particular family of these graphs called standard graphs.  Curtis, Ingerman, and Morrow  \cite{curtis_ingerman_morrow} and De Verdiere, Gitler, and Vertigan \cite{deverdiere_gitler_vertigan} were able to use this algorithm to obtain results for all circular planar resistor networks. They considered the property of criticality, a condition regarding the connections of a network.  Building on the previous algorithm and using medial graphs, they proved that the networks where we uniquely recover conductances from the response matrix are exactly the critical networks.  They were also able to use medial graphs to show a complete catalogue of local moves that preserve the response matrix of a circular planar resistor network and use these local moves to classify critical networks.  Moreover, they were able to identify all response matrices of resistor networks with a positivity criteria for minors of the matrix.

Kenyon and Wilson \cite{KW1,KW2} studied the combinatorics of the response matrix for resistor networks.  In particular, they were able to realize the minors of the response matrix for a circular planar resistor network as a sum over groves in the network.  This explains the positivity criteria for response matrices of circular planar resistor networks found in earlier work.

However, very little is known for more complicated networks.  In \cite{2011arXiv1104.4998L}, Lam and Pylyavskyy studied the Dirichlet-to-Neumann 
problem for electrical networks on a cylinder. They gave a conjectural solution for general cylindrical electrical networks, and showed it held for a special class of these networks known as ``purely cylyndrical'' networks.

In this paper, we introduce resistor networks in a punctured disk (rnpds). These are networks embedded in a disk where exactly one boundary vertex is placed inside the disk. We first discuss new local moves that arise for these networks.  We conjecture that the local moves stated in the paper are the only moves that preserve electrical equivalence.  Next we define medial graphs for rnpds and prove several properties of the medial graph for irreducible rnpds.  Medial graphs allow us to define spider graphs, a famliy of rnpds closely related to the standard graphs.  Using spider graphs, we find several conditional local moves that can be applied to rnpds under certain conditions without changing the response matrix.  Turning to the question of recoverability, we find a class of recoverable rnpds.  We also give a necessary condition for recoverability or rnpds.

\section{Background and Definitions} \label{backgroundanddefinitions}
For more background on electrical networks, see \cite{curtis_ingerman_morrow}, \cite{deverdiere_gitler_vertigan}, and \cite{2012arXiv1203.1256K}.
\begin{defn}
A \textit{resistor network} is a graph $(V, E)$ with a specified set $B \subseteq V$ of \textit{boundary vertices} and a real nonnegative conductance $c_e$, for each $e \in E.$ The remaining vertices, $I = V \setminus B$, are called \textit{internal vertices}.  We will say graph to refer to a resistor network without conductances.
\end{defn}

In this paper, we'll assume all resistor networks are connected graphs. Our convention throughout this paper will be to color boundary vertices white and internal vertices black.
\begin{defn}
A \textit{circular planar resistor network} (cprn) is a resistor network that can be embedded in a disk so that it is planar and all the boundary vertices are on the boundary of the disk. 
A \textit{resistor network in a punctured disk} (rnpd) is a resistor network that can be embedded in a disk so that it is planar and all boundary vertices but one are on the boundary of the disk.  See Figure~\ref{fig:cprn-rnpd}.
\end{defn}

\begin{figure}[h]
    \label{fig:example}
  \begin{minipage}[t]{0.5\textwidth}
  \begin{center}
\begin{tikzpicture}[scale=0.8]
          \draw [line width = 0.25mm, fill = black] (0, 2)edge node[right]{$2$}(2,0);
          \draw [line width = 0.25mm, fill = black] (0, 2)edge node[left]{$1$}(-2,0);
          \draw [line width = 0.25mm, fill = black] (0, -2)edge node[below]{$1$}(2,0);
          \draw [line width = 0.25mm, fill = black] (0, -2)edge node[left]{$3$}(-2,0);
          \draw [line width = 0.25mm, fill = black]  (2,0)edge node[above]{$1$}(-2,0) ;
          \draw (2,0) edge[bend right] node[above]{$1$} (0,-2);
          
          \draw [line width=0.25mm, fill=black] (0, 2) circle (1.5mm);
          \draw [line width=0.25mm, fill=black] (0, -2) circle (1.5mm);
          \draw [line width=0.25mm, fill=white] (-2,0) circle (1.5mm);
          \draw [line width=0.25mm, fill=white] (2,0) circle (1.5mm);
    \end{tikzpicture}
      \end{center}
\end{minipage}
\begin{minipage}[t]{0.5\textwidth}
  \begin{center}
\begin{tikzpicture}[scale=0.8]

          \draw [line width = 0.25mm, fill = black] (0, 2)edge node[above]{$1$}(2,0);
          \draw [line width = 0.25mm, fill = black] (0, 2)edge node[above]{$2$}(-2,0);
          \draw [line width = 0.25mm, fill = black] (0, -2)edge node[below]{$7$}(2,0);
          \draw [line width = 0.25mm, fill = black] (0, -2)edge node[below]{$3$}(-2,0);
          \draw [line width = 0.25mm, fill = black]  (2,0)edge node[above]{$1$}(0,0) ;
          \draw [line width = 0.25mm, fill = black]  (-2,0)edge node[above]{$1$}(0,0) ;
          \draw [line width = 0.25mm, fill = black]  (0,0)edge node[left]{$5$}(0,-2) ;
          \draw [line width = 0.25mm, fill = black]  (0,0)edge node[left]{$1$}(0,2) ;

          \draw [line width=0.25mm, fill=black] (0, 2) circle (1.5mm);
          \draw [line width=0.25mm, fill=black] (0, -2) circle (1.5mm);
          \draw [line width=0.25mm, fill=white] (-2,0) circle (1.5mm);
          \draw [line width=0.25mm, fill=white] (2,0) circle (1.5mm);
          \draw [line width=0.25mm, fill=white] (0,0) circle (1.5mm);
    \end{tikzpicture}
      \end{center}
\end{minipage}
\caption{A cprn (left) and rnpd (right)}
\label{fig:cprn-rnpd}
\end{figure}
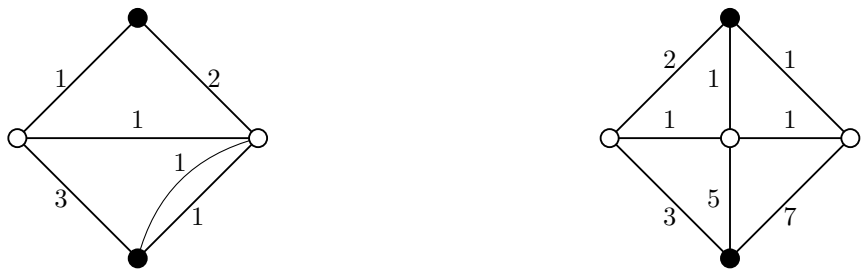
\begin{defn}
A \textit{non-trival rnpd} is an rnpd that cannot be drawn as a cprn. Note that in a non-trivial rnpd, the interior boundary vertex of must be surrounded by a polygon. Otherwise, we are able to redraw the network so that the interior boundary vertex is on the boundary of the disk.
\end{defn}

\begin{defn}
A \textit{potential function} for a resistor network $(V, E)$ is a function $f:V\to\mathbb{R}_{\geq0}$. An edge $(v_1,v_2)$ has a \emph{voltage} given by the difference in potential at $v_1$ and $v_2$.
\end{defn}

We want our resistor networks to model electrical circuits.  This means they need to satisfy Ohm's Law and Kirchhoff's Current Law.

\begin{thm}[Ohm's Law] If $v_e$ is the voltage across edge $e$, then $v_ec_e=i_e$, where $i_e$ is the current across edge $e$.
\end{thm}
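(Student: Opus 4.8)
The statement labelled ``Ohm's Law'' carries no real mathematical content: it is the constitutive postulate that turns a weighted graph into a model of an electrical circuit. Accordingly, the ``proof'' I would give is an unpacking of definitions together with a reconciliation with the classical physical law. The plan is as follows. First I would recall that the conductance $c_e$ assigned to an edge $e$ is by definition the reciprocal of that edge's resistance, $c_e = 1/r_e$ (this is the sense in which conductance $0$ models an absent wire and a large conductance models a near-short). Classical Ohm's law asserts that the voltage drop across a resistor equals the product of the current through it and its resistance, $v_e = i_e r_e$; substituting $r_e = 1/c_e$ and clearing the denominator yields $v_e c_e = i_e$, which is exactly the claimed identity.

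The only genuine care needed is bookkeeping about orientation. A ``voltage across edge $(v_1,v_2)$'' was defined above as the difference $f(v_1) - f(v_2)$ of the potential function, which depends on which endpoint is listed first; likewise $i_e$ must be read as the current flowing in the corresponding direction. So the second step of the writeup is to fix, once and for all, an arbitrary orientation of each edge, declare $v_e = f(\text{tail}) - f(\text{head})$ and let $i_e$ be the current from tail to head, and observe that reversing an orientation negates both $v_e$ and $i_e$, so the relation $v_e c_e = i_e$ is independent of the choice. No positivity is used, only that $c_e \ge 0$ is a well-defined real number; in the degenerate case $c_e = 0$ the law simply forces $i_e = 0$ regardless of the voltage, consistent with an open circuit.

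If instead one wants Ohm's law to emerge as a consequence rather than a definition, the alternative approach is to start from the power-dissipation functional $\mathcal{E}(f) = \tfrac12 \sum_{e=(v_1,v_2)} c_e\,(f(v_1)-f(v_2))^2$ on potential functions, postulate that the physical potential minimizes $\mathcal{E}$ subject to the prescribed boundary potentials, and then \emph{define} the edge current as the contribution of $e$ to the gradient of $\mathcal{E}$; with this definition $i_e = c_e v_e$ holds tautologically, and differentiating $\mathcal{E}$ in the internal coordinates recovers Kirchhoff's current law at internal vertices. I do not anticipate any real obstacle along either route; the substance of the paper lies in what can be recovered from the response matrix, and this statement is purely a setup convention. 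The one point I would be careful to state explicitly is the sign/orientation convention, since the response matrix and all later computations depend on it.
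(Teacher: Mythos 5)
Your reading is correct: the paper states Ohm's Law as a physical postulate and gives no proof, exactly as you observe. Your unpacking of $c_e = 1/r_e$ and the orientation convention is a reasonable elaboration of the same (non-)argument, so there is nothing to reconcile.
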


By Ohm's Law, a potential function induces a current across each edge. Note that current always flows from the vertex with the higher potential to the vertex with the lower potential.

\begin{thm}[Kirchhoff's Current Law] For each node in an electrical circuit, the sum of the currents entering the node must be the same as the sum of the currents leaving the node.
\end{thm}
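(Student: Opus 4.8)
The statement to be established is Kirchhoff's Current Law, which is not a theorem in the deductive sense but a structural axiom of the electrical model; accordingly, the plan is to \emph{justify} it from a single physical principle — conservation of electric charge — together with the modeling hypothesis that the network sits in a steady, time-independent state. First I would fix a node $v$ and let $Q_v(t)$ denote the total charge residing at $v$ at time $t$. By definition of current, the net current into $v$, namely $\sum_{e \text{ into } v} i_e - \sum_{e \text{ out of } v} i_e$, equals $\tfrac{d}{dt}Q_v(t)$. Next I would invoke the steady-state assumption: the potential function $f$ is prescribed and fixed, and by Ohm's Law each edge current is $i_e = c_e v_e$ with $v_e$ determined by the (time-independent) values of $f$, so every $i_e$ is constant in time; hence $Q_v$ is constant and $\tfrac{d}{dt}Q_v \equiv 0$. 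Combining the two observations gives $\sum_{e \text{ into } v} i_e = \sum_{e \text{ out of } v} i_e$, which is exactly the claim. A tidier bookkeeping variant is to orient each incident edge once and for all and carry signed currents, so that the law reads $\sum_{e \ni v} \pm i_e = 0$; this avoids the case split between ``in'' and ``out'' and is the form most convenient for later computations.

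The content that will actually be used downstream comes from feeding Ohm's Law back into the conservation identity. At an \emph{interior} vertex $v$, writing each incident current as $i_{vw} = c_{vw}\bigl(f(v)-f(w)\bigr)$ for the neighbors $w$ of $v$ and setting the signed sum to zero yields $\sum_{w \sim v} c_{vw}\bigl(f(v)-f(w)\bigr)=0$, equivalently $f(v) = \bigl(\sum_{w\sim v} c_{vw} f(w)\bigr)/\bigl(\sum_{w\sim v} c_{vw}\bigr)$; that is, a valid potential is \emph{harmonic} with respect to the conductance weights at every interior vertex. At a \emph{boundary} vertex the same signed sum need not vanish, and its value is by definition the net current out of that vertex; collecting these boundary values is precisely what produces the response vector, hence the response matrix. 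So the ``proof'' of Kirchhoff's Current Law is really the passage from a conservation axiom to the discrete harmonicity condition, and I would present it in that order: state the axiom, record the signed-sum form, then derive harmonicity at interior nodes and the boundary-current interpretation at boundary nodes.

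The main obstacle is conceptual rather than technical: there is no ``from scratch'' mathematical proof, so the care goes into pinning down what ``steady state'' means and checking it is consistent with the rest of the model — namely that $f$ has no time dependence, that nonnegativity of the conductances $c_e$ keeps all currents finite, and that no external charge is injected at interior vertices. Once those hypotheses are made explicit, the argument above is immediate. I would also remark, looking ahead, that combining interior harmonicity with a discrete maximum principle will guarantee that any assignment of potentials at the boundary vertices extends uniquely to a potential function on all of $V$, so that the response matrix in the Introduction is well defined; that observation is the real payoff of recording Kirchhoff's Law in this form.
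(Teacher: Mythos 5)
Your treatment is consistent with the paper, which states Kirchhoff's Current Law (like Ohm's Law) as a physical law of the model and offers no proof; your justification via charge conservation in steady state is the standard and appropriate way to ground it, and your downstream observations (harmonicity of the potential at interior vertices, net boundary currents assembling into the response matrix) match exactly how the paper uses the law in the subsequent theorem on existence and uniqueness of the potential extension. Nothing is missing.
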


The analogue of Kirchhoff's Current Law for resistor networks is that for each internal vertex in a resistor network, the sum of the currents entering the vertex must be the same as the sum of the currents leaving the vertex. If we use Ohm's Law to calculate current, only certain potential functions satisfy Kirchhoff's current Law.

\begin{thm}[\cite{curtis_ingerman_morrow}]
Given a function $\phi:B\to\mathbb{R}_{\geq0}$ for a resistor network, there is a unique choice of potential function $f$ where $f|_B=\phi$ and the currents induced by $f$ from Ohm's Law satisfy Kirchhoff's Current Law.
\end{thm}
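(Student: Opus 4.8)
The plan is to reduce the statement to the invertibility of a single matrix. Order the vertices with the internal vertices $I$ first, and write $f_I$ for the unknown restriction $f|_I$, while $f|_B = \phi$ is prescribed. By Ohm's Law, Kirchhoff's Current Law at an internal vertex $v$ becomes the linear equation $\sum_{u:(u,v)\in E} c_{uv}(f(v)-f(u)) = 0$. Collecting these $|I|$ equations yields a linear system $L_{II}\,f_I = -L_{IB}\,\phi$, where $L$ is the weighted Laplacian of the network ($L_{uv}=-c_{uv}$ off the diagonal, $L_{vv}=\sum_u c_{uv}$) and $L_{II}$, $L_{IB}$ are its blocks indexed by $I$ and $B$. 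Thus it suffices to show that the square matrix $L_{II}$ is invertible: then $f_I = -L_{II}^{-1}L_{IB}\phi$ is the unique solution, and together with $f|_B=\phi$ it gives the unique potential function.

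First I would establish uniqueness by showing $\ker L_{II} = \{0\}$. Given $x$ supported on $I$ with $L_{II}x=0$, extend it by zero on $B$ to a function $\tilde x$ on $V$; the standard Laplacian quadratic-form identity gives $0 = x^{\top}L_{II}x = \tilde x^{\top}L\tilde x = \sum_{(u,v)\in E} c_{uv}(\tilde x(u)-\tilde x(v))^2$. Since each $c_{uv}\ge 0$, every term vanishes, so $\tilde x$ is constant across each positive-conductance edge; as the network is connected and $\tilde x\equiv 0$ on the nonempty set $B$, we get $\tilde x\equiv 0$, hence $x=0$. This is really the discrete maximum principle in disguise: a function harmonic at every internal vertex (in the sense of Kirchhoff) attains its extreme values on $B$, so if it vanishes on $B$ it vanishes everywhere. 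Existence is then automatic from rank--nullity, since $L_{II}$ is square with trivial kernel. One should additionally note that the resulting $f$ is nonnegative — again by the maximum principle $f$ attains its minimum on $B$, where it equals $\phi\ge 0$ — so that $f$ is a potential function in the sense of the earlier definition.

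The step I expect to be the only genuine obstacle is the handling of degenerate (zero) conductances. If such edges are permitted, $L_{II}$ can fail to be invertible — an internal vertex joined to the rest of the network only by zero-conductance edges is entirely unconstrained — so one really needs the positive-conductance subgraph to still connect every internal vertex to the boundary. Under the paper's running convention that networks are connected (with conductances regarded as positive, or with zero-conductance edges deleted in a way that preserves this connectivity), this issue disappears and the argument above goes through verbatim.
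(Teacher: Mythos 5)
Your proposal is correct: reducing the Dirichlet problem to invertibility of the interior block of the weighted Laplacian, killing the kernel with the quadratic form (equivalently the discrete maximum principle) plus connectedness, and then getting existence from rank--nullity is the standard argument, and it is essentially the proof given in the cited source (the paper itself states this theorem with a citation to \cite{curtis_ingerman_morrow} and supplies no proof of its own). Your two side remarks are also apt: strictly positive conductances (or deletion of zero-conductance edges preserving connectivity to the boundary) are needed for $L_{II}$ to be invertible, and nonnegativity of $f$ follows from the maximum principle since $\phi\ge 0$, matching the paper's definition of a potential function.
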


\begin{defn}
Let $\Gamma=(V,E)$ be a resistor network with $m$ vertices, $n$ of which are boundary vertices. Index all boundary vertices $v\in B$ as $\{1,\cdots,n\}$ and all internal vertices $v\in I$ as $\{n+1,\cdots,m\}$. Then, we define the \textit{Kirchhoff matrix} of $\Gamma$ as follows:

$$K_{i,j}=
\begin{cases}
\displaystyle \sum_{e=(i,j)}c_e & i\neq j\\
\displaystyle -\sum_{\substack {e= (i,k)\\ k\in[n]}} c_e & i=j\\
\end{cases}$$
\end{defn}

Note that the Kirchhoff matrix can be represented as:
$$K=\begin{bmatrix}
    A & B \\
    B^T & C
\end{bmatrix}$$
where $A$ is a symmetric $n\times n$ matrix, $B$ is an $n\times (m-n)$, and $C$ is a symmetric $(m-n)\times(m-n)$ matrix. 

\begin{defn}
The \textit{response matrix}, $\Lambda(\Gamma)$ of a resistor network $\Gamma$ is the Schur complement $A-BC^{-1}B^T$ of the Kirchhoff matrix.
\end{defn}

We can represent a potential assignment to the boundary vertices as a vector $u\in\mathbb{R}_{\geq0}^n$.  $\Lambda(\Gamma)$ gives the linear map from such an assignment to the vector of currents flowing into each boundary vertex (where a negative value represents current flowing out of the vertex).

\newpage
\begin{ex} \label{cprnexample}
Consider the following cprn:
\begin{center}
          \begin{tikzpicture}[scale=0.8]
          \draw [line width = 0.25mm, fill = black] (0, 2)edge node[above]{$2$}(2,0);
          \draw [line width = 0.25mm, fill = black] (0, 2)edge node[above]{$1$}(-2,0);
          \draw [line width = 0.25mm, fill = black] (0, -2) edge node[below]{$1$}(2,0);
          \draw [line width = 0.25mm, fill = black] (0, -2)edge node[below]{$3$}(-2,0);
          \draw [line width = 0.25mm, fill = black]  (2,0)edge node[above]{$1$}(-2,0) ;
          \draw (2,0) edge[bend right] node[above]{$1$} (0,-2);
          
          \draw [line width=0.25mm, fill=black] (0, 2) circle (1.5mm);
          \draw [line width=0.25mm, fill=black] (0, -2) circle (1.5mm);
          \draw [line width=0.25mm, fill=white] (-2,0) circle (1.5mm);
          \draw [line width=0.25mm, fill=white] (2,0) circle (1.5mm);
          
          \node at (0,2) [label=above:$b$]{};
          \node at (0,-2) [label=below:$d$]{};
          \node at (2,0) [label=right:$c$]{};
          \node at (-2,0) [label=left:$a$]{};
    \end{tikzpicture}
\end{center}
We see that the vertex set is $V=\{a,b,c,d\}$. The set of boundary vertices is $B=\{a,c\}$ and the set of internal vertices is $I=\{b,d\}$. Thus, in order to compute the Kirchhoff matrix, we label $a$ and $c$ by 1 and 2, respectively, and $b$ and $d$ by 3
and 4, respectively. Then, the Kirchhoff matrix is:

$$K=\begin{bmatrix}
   -5 & 1 & 1 & 3 \\
    1 & -5 & 2 & 2 \\
    1 & 2 & -3 & 0 \\
    3 & 2 & 0 & -5 \\
\end{bmatrix}$$

Now, as described above, one can compute the response matrix:

$$\begin{bmatrix}
   -5 & 1 \\
    1 & -5 \\
    \end{bmatrix} -  \begin{bmatrix}
   1 & 3 \\
    2 & -2 \\
    \end{bmatrix} \begin{bmatrix}
   -3 & 0 \\
    0 & -5 \\
    \end{bmatrix}^{-1}\begin{bmatrix}
   1 & 3 \\
    2 & -2 \\
    \end{bmatrix}^T=\begin{bmatrix}-\frac{43}{15}&\frac{7}{15}\\ \frac{7}{15}&-\frac{43}{15}\end{bmatrix}$$ 
\end{ex}

\begin{defn}

Graphs $\Gamma_1$ and $\Gamma_2$ are \textit{electrically equivalent}, if, for every assignment of conductances for $\Gamma_1$, there exists an assignment of conductances to $\Gamma_2$ such that the resulting electrical networks have the same response matrix, and vice versa.
\end{defn}

We are interested in three properties of networks that have been shown to be the same for the cprn case: irreducibility, recoverability, and criticality.

\begin{defn}
We call a graph \textit{reducible} if it is electrically equivalent to a graph with fewer edges, and \textit{irreducible} otherwise. 
\end{defn}

Note that a network may have the same response matrix as a network with fewer edges while still being irreducible.

\begin{defn}
A resistor network is \textit{recoverable} if given its response matrix and and graph, we are able to uniquely determine its conductances.
\end{defn}

\begin{defn}
A \textit{connection} of a resistor network $\Gamma = (V, E)$ is a tuple $(P, Q)$ s.t. $P = \{p_1, \cdots, p_k\}$ and $Q = \{q_1, \cdots, q_k\}$ are subsets of $B$ and there exists $k$ disjoint paths that do not pass through boundary vertices connecting $p_i$ to $q_{\pi(i)}$, $\forall i$ and for some $\pi \in S_k$. For a cprn, we require that $p_1, \cdots, p_k, q_k, \cdots, q_1$ occur in clockwise order around the disk's boundary. For an rnpd, we have the same requirement, with the exception that the interior boundary vertex can appear in either $Q$ or $P$, at any index.
\end{defn}

\begin{rem}
If $\Gamma$ is a cprn and $(P, Q)$ is a connection, then the only possible $\pi \in S_{k}$ connecting $P$ and $Q$ is the identity permutation. Otherwise, the paths will not be disjoint.
\end{rem}

\begin{defn}
A cprn is \textit{critical} if the removal of any edge in the network breaks a connection. 
\end{defn}

Curtis, Ingerman, and Morrow \cite{curtis_ingerman_morrow} proved that these three definitions are equivalent in the cprn case. In this paper, we investigate to what extent this equivalence holds in the rnpd case.

\section{Local Moves}
There are five local transformations on cprns that do not change the response matrix \cite{deverdiere_gitler_vertigan}.  We list these below.  Note that the networks we draw below are components of a possibly larger network.  The grey vertices indicate where the components in the pictures may be connected to the rest of the graph.  That is, they are vertices that may be either boundary vertices or internal vertices and may have other edges incident to them.  In the cases where the arrow goes only in one direction, we may still apply the transformation in the opposite direction, but there is not a unique way to do so.
\begin{itemize}
    \item Loop removal
    \begin{center}
    \begin{tikzpicture}
    \node at (1.4,0) {$a$};
    \draw [line width=0.4mm,black] plot [smooth, tension=1] coordinates { (0,0) (1,0.5) (1,-0.5) (0,0) };
    \draw [line width=0.25mm, fill=gray!60] (0,0) circle (1.5mm);
    \node at (3,0) {\LARGE$\rightarrow$};
    \draw [line width=0.25mm, fill=gray!60] (4.5,0) circle (1.5mm);
    \fill[white] (5,0) rectangle (6,0);
    \end{tikzpicture}
    \end{center}
    \item Pendant removal
    \begin{center}
    \begin{tikzpicture}
    \path[-, line width=0.4mm]
    (2,0) edge node[below]{$a$} (0,0);
    \draw [line width=0.25mm, fill=gray!60] (0,0) circle (1.5mm);
    \draw [line width=0.25mm, fill=black] (2,0) circle (1.5mm);
    \node at (3.5,0) {\LARGE$\rightarrow$};
    \draw [line width=0.25mm, fill=gray!60] (5,0) circle (1.5mm);
    \fill[white] (6,0) rectangle (7,0);
    \end{tikzpicture}
    \end{center}
    \item Series transformation
    \begin{center}
    \begin{tikzpicture}
    \path[-, line width=0.4mm]
    (2,0) edge node[below]{$a$} (0,0)
    (2,0) edge node[below]{$b$} (4,0);
    \draw [line width=0.25mm, fill=gray!60] (0,0) circle (1.5mm);
    \draw [line width=0.25mm, fill=black] (2,0) circle (1.5mm);
    \draw [line width=0.25mm, fill=gray!60] (4,0) circle (1.5mm);
    \node at (5.5,0) {\LARGE$\rightarrow$};
    \path[-, line width=0.4mm]
    (7,0) edge node[below]{$\displaystyle\frac{ab}{a+b}$} (9,0);
    \draw [line width=0.25mm, fill=gray!60] (7,0) circle (1.5mm);
    \draw [line width=0.25mm, fill=gray!60] (9,0) circle (1.5mm);
    \fill[white] (10,0) rectangle (11,0);
    \end{tikzpicture}
    \end{center}
    \item Parallel transformation
    \begin{center}
    \begin{tikzpicture}
    \path[-, line width=0.4mm]
    (2,0) edge[bend right] node[above]{$a$} (0,0)
    (2,0) edge[bend left] node[below]{$b$} (0,0);
    \draw [line width=0.25mm, fill=gray!60] (0,0) circle (1.5mm);
    \draw [line width=0.25mm, fill=gray!60] (2,0) circle (1.5mm);
    \node at (3.5,0) {\LARGE$\rightarrow$};
    \path[-, line width=0.4mm]
    (7,0) edge node[below]{$a+b$} (5,0);
    \draw [line width=0.25mm, fill=gray!60] (7,0) circle (1.5mm);
    \draw [line width=0.25mm, fill=gray!60] (5,0) circle (1.5mm);
    \end{tikzpicture}
    \end{center}
    
    \newpage
    \item Y-$\Delta$ move
    \begin{center}
    \begin{tikzpicture}
    \path[-, line width=0.4mm]
    (0,0) edge node[above]{$b$} (1.5,1)
    (1.5,2.35) edge node[right]{$c$} (1.5,1)
    (3,0) edge node[above]{$a$} (1.5,1);
    \draw [line width=0.25mm, fill=gray!60] (0,0) circle (1.5mm);
    \draw [line width=0.25mm, fill=gray!60] (1.5,2.5) circle (1.5mm);
    \draw [line width=0.25mm, fill=gray!60] (3,0) circle (1.5mm);
    \draw [line width=0.25mm, fill=black] (1.5,1) circle (1.5mm);
    \node at (4.5,1) {\LARGE$\leftrightarrow$};
    \path[-, line width=0.4mm]
    (6,0) edge node[left]{$A$} (7.5,2.5)
    (7.5,2.5) edge node[right]{$B$} (9,0)
    (9,0) edge node[below]{$C$} (6,0);
    \draw [line width=0.25mm, fill=gray!60] (6,0) circle (1.5mm);
    \draw [line width=0.25mm, fill=gray!60] (7.5,2.5) circle (1.5mm);
    \draw [line width=0.25mm, fill=gray!60] (9,0) circle (1.5mm);
    \node at (1.5,-1) {$\displaystyle a=\frac{AB+AC+BC}{A}$};
    \node at (1.5,-2) {$\displaystyle b=\frac{AB+AC+BC}{B}$};
    \node at (1.5,-3) {$\displaystyle c=\frac{AB+AC+BC}{C}$};
    \node at (7.5,-1) {$\displaystyle A=\frac{bc}{a+b+c}$};
    \node at (7.5,-2) {$\displaystyle B=\frac{ac}{a+b+c}$};
    \node at (7.5,-3) {$\displaystyle C=\frac{ab}{a+b+c}$};
    \end{tikzpicture}
    \end{center}
\end{itemize}

All of these moves are valid for rnpds. We also have two new local moves for rnpds that do not exist for cprns:

\begin{itemize}
    \item Antenna Jumping
    
    We define an \textit{antenna} to be an interior boundary vertex of degree one. If we have such a vertex and it is adjascent to vertex $v$, then \textit{antenna jumping} is performed by jumping the antenna over an edge incident to $v$ and into a different face. The conductances are unchanged by this move.
    \begin{center}
    \begin{tikzpicture}
    \path[-, line width=0.4mm]
    (2,0) edge node[below]{$a$} (0,0)
    (2,0) edge node[right]{$b$} (2,2)
    (2,0) edge node[left]{$c$} (1,1);
    \fill[white] (-1,0) rectangle (0,1);
    \draw [line width=0.25mm, fill=white] (1,1) circle (1.5mm);
    \draw [line width=0.25mm, fill=gray!60] (0,0) circle (1.5mm);
    \draw [line width=0.25mm, fill=gray!60] (2,0) circle (1.5mm);
    \draw [line width=0.25mm, fill=gray!60] (2,2) circle (1.5mm);
    \node at (3.5,1) {\LARGE$\leftrightarrow$};
    \path[-, line width=0.4mm]
    (7,0) edge node[below]{$a$} (5,0)
    (7,0) edge node[left]{$b$} (7,2)
    (7,0) edge node[right]{$c$} (8,1);
    \draw [line width=0.25mm, fill=white] (8,1) circle (1.5mm);
    \draw [line width=0.25mm, fill=gray!60] (5,0) circle (1.5mm);
    \draw [line width=0.25mm, fill=gray!60] (7,0) circle (1.5mm);
    \draw [line width=0.25mm, fill=gray!60] (7,2) circle (1.5mm);
    \end{tikzpicture}
    \end{center}
    \item Antenna Absorption
    \begin{center}
    \begin{tikzpicture}
    \path[-, line width=0.4mm]
    (0,0) edge node[above]{$b$} (1.5,1)
    (1.5,2.35) edge node[right]{$c$} (1.5,1)
    (3,0) edge node[above]{$a$} (1.5,1)
    (0,0) edge node[left]{$e$} (1.5,2.5)
    (1.5,2.5) edge node[right]{$f$} (3,0)
    (3,0) edge node[below]{$d$} (0,0)
    (1.5,0.2) edge node[right]{$g$} (1.5,1);
    \draw [line width=0.25mm, fill=gray!60] (0,0) circle (1.5mm);
    \draw [line width=0.25mm, fill=gray!60] (1.5,2.5) circle (1.5mm);
    \draw [line width=0.25mm, fill=gray!60] (3,0) circle (1.5mm);
    \draw [line width=0.25mm, fill=black] (1.5,1) circle (1.5mm);
    \draw [line width=0.25mm, fill=white] (1.5,0.25) circle (1.5mm);
    \node at (4.5,1) {\LARGE$\rightarrow$};
    \path[-, line width=0.4mm]
    (6,0) edge node[above]{$B$} (7.5,1)
    (7.5,2.35) edge node[right]{$C$} (7.5,1)
    (9,0) edge node[above]{$A$} (7.5,1)
    (6,0) edge node[left]{$E$} (7.5,2.5)
    (7.5,2.5) edge node[right]{$F$} (9,0)
    (9,0) edge node[below]{$D$} (6,0);
    \draw [line width=0.25mm, fill=gray!60] (6,0) circle (1.5mm);
    \draw [line width=0.25mm, fill=gray!60] (7.5,2.5) circle (1.5mm);
    \draw [line width=0.25mm, fill=gray!60] (9,0) circle (1.5mm);
    \draw [line width=0.25mm, fill=black] (7.5,1) circle (1.5mm);
    \node at (5.5,-1) {$\displaystyle A=\frac{ag}{a+b+c+g}$};
    \node at (5.5,-2) {$\displaystyle B=\frac{bg}{a+b+c+g}$};
    \node at (5.5,-3) {$\displaystyle C=\frac{cg}{a+b+c+g}$};
    \node at (9.5,-1) {$\displaystyle D=d+\frac{ab}{a+b+c+g}$};
    \node at (9.5,-2) {$\displaystyle E=e+\frac{bc}{a+b+c+g}$};
    \node at (9.5,-3) {$\displaystyle F=f+\frac{ac}{a+b+c+g}$};
    \node at (-0.5,-3) {\phantom{$\displaystyle F=f+\frac{ac}{a+b+c+g}$}};
    \end{tikzpicture}
    \end{center}
\end{itemize}

\begin{conj}
Any two electrically equivalent graphs associated to rnpds are connected by the local moves stated in this section.
\end{conj}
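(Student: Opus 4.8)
The plan is to mirror the medial-graph strategy that De Verdiere, Gitler, and Vertigan \cite{deverdiere_gitler_vertigan} used for cprns and push it through to the punctured disk. The first step is to reduce to the irreducible case: show that every rnpd can be brought, via the local moves of this section (loop and pendant removal, series and parallel reductions, Y-$\Delta$, and the two antenna moves), to an electrically equivalent irreducible rnpd, and then prove the conjecture for irreducible rnpds. Since every listed move either preserves or strictly decreases the number of edges, the reduction process terminates, so the real content lies in the converse: two irreducible rnpds with the same response data must have the same underlying graph up to these moves.

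For the irreducible case I would pass to medial graphs on the punctured disk -- essentially strand diagrams on an annulus -- as developed in the later sections, and establish two things. First, an \emph{invariant}: electrically equivalent irreducible rnpds have medial graphs carrying the same combinatorial connection data, which in this setting should be encoded by the $z$-sequences promised in the abstract; these play the role that the pairing of boundary strands plays in the disk case, now refined to record how strands wind around the puncture. Second, a \emph{realization/uniqueness} statement: any two medial graphs on the punctured disk that are lens-free in the appropriate sense and carry the same $z$-sequence data are related by a finite sequence of strand homotopies, each of which is one of swapping two strand endpoints past a crossing (Y-$\Delta$), uncrossing a series or parallel bigon, deleting a contractible closed strand or a strand with both ends at the same empty boundary region (loop or pendant removal), or sliding a strand past the puncture (the antenna moves). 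Translating each elementary homotopy back to the graph side should return exactly the local-move list, which together with the first step proves the conjecture.

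The main obstacle -- and the reason this is only a conjecture -- is the lens-removal lemma in the presence of the puncture. In the disk, two strands crossing twice bound a lens that can always be emptied and removed by Y-$\Delta$ moves; on the annulus this fails, because a lens can wind around the puncture and cannot be contracted without dragging a strand across the hole. One must show that the medial graphs arising from \emph{irreducible} rnpds contain only the lenses that are genuinely removable, and that the antenna moves supply precisely the extra flexibility needed to kill the winding obstructions. This is intertwined with the still-open understanding of the Dirichlet-to-Neumann problem on the cylinder, the punctured disk being topologically an annulus, where the analogous characterization of Lam and Pylyavskyy \cite{2011arXiv1104.4998L} is itself conjectural; a complete proof would likely require settling, or substantially advancing, the cylindrical case. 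A more tractable intermediate goal, which I would attack first, is to verify the conjecture on the spider graphs and the broader recoverable family constructed later in the paper, where the medial structure is explicit and the winding behavior is controlled.
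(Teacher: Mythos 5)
The statement you are asked to prove is stated in the paper as a \emph{conjecture}; the paper offers no proof of it, so there is nothing to compare your argument against. What the paper does prove are two partial results in the direction of your plan: Theorem~\ref{FirstMedialTheorem}, which characterizes irreducibility (by the moves \emph{excluding} antenna absorption) in terms of medial circles, self-intersections, and lenses containing the puncture, and Theorem~\ref{thm:same-z-seq}, which shows that two such irreducible rnpds have the same $z$-sequence if and only if their graphs are $Y$-$\Delta$ equivalent. Your sketch follows essentially this road map, and you are right that the lens-removal argument in the presence of the puncture is where the new difficulties live.

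However, your proposal is a plan, not a proof, and the decisive step is missing. Your ``invariant'' claim --- that electrically equivalent irreducible rnpds have the same $z$-sequence --- is exactly the part that nobody knows how to prove. Theorem~\ref{thm:same-z-seq} relates the $z$-sequence to $Y$-$\Delta$ equivalence of \emph{graphs}; it says nothing about how the $z$-sequence is read off from the \emph{response matrix}. In the cprn case that link is supplied by the connection/determinant machinery of Curtis--Ingerman--Morrow (the $z$-sequence is recovered from which circular minors of $\Lambda$ vanish), and no analogue is established here for the punctured disk --- indeed the paper only proves positivity of minors for connections avoiding the interior boundary vertex, in a commented-out section. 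A second concrete gap: antenna absorption does not fit your ``elementary strand homotopy'' dictionary. It is a one-way, conductance-mixing move that destroys a self-intersecting strand configuration rather than sliding a strand past the puncture, and it is deliberately excluded from the hypotheses of both Theorems~\ref{FirstMedialTheorem} and~\ref{thm:same-z-seq}; any completeness proof must account for equivalences that factor through it, and your outline does not. Your closing suggestion --- verify the conjecture on spider graphs and the recoverable family first --- is reasonable, but as written the proposal establishes neither direction of the conjecture beyond what the paper already proves.
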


\section{Medial Graphs and Z-sequences}

\begin{defn}[\cite{curtis_ingerman_morrow}]
For an cprn $\Gamma$ with $n$ boundary vertices, $v_1,...,v_n$ clockwise, place $2n$ points, $t_1,...,t_{2n}$, around the boundary of the disk in clockwise order such that for even $i$, $t_{i-1}$ and $t_i$ are on either side of $v_{i/2}$.  That is, between $v_j$ and $v_{j+1}$ we have, in clockwise order, $v_j, t_{2j}, t_{2j+1}, v_{j+1}$ where indices are modulo $n$ for the $v_i$'s and modulo $2n$ for the $t_i$'s.  For each edge $e$ in $\Gamma$, let $m_e$ be its midpoint.
Then the \textit{medial graph}, $M(\Gamma)$, of $\Gamma$ is the graph with vertices $\{t_i\ |\ 1\leq i\leq 2n\} \cup \{m_e\ |\ e\in E\}$ and edges $\{(m_e,m_f)\ |\ e,f\text{ edges in }\Gamma\text{ on the same face and sharing a vertex}\}\cup\{(m_e,m_e)\ |\ m_e\text{ a spike into a face of }\Gamma\}\cup\{(t_i,m_e)\ |\ t_i,e\text{ on the same face, }i\text{ odd, }e\text{ an edge connected}\\\text{to }v_{(i+1)/2} \}\cup\{(t_i,m_e)\ |\ t_i,e\text{ on the same face, }i\text{ even, }e\text{ an edge connected to }v_{i/2} \}$.
\end{defn}

\begin{ex}\label{ex:medial-graph}
The medial graph of the cprn in Figure~\ref{fig:cprn-rnpd} is drawn below.
\begin{center}
    \begin{tikzpicture}[scale=1]
          \draw [line width = 0.25mm, fill = black] (0, 2)edge (2,0);
          \draw [line width = 0.25mm, fill = black] (0, 2)edge (-2,0);
          \draw [line width = 0.25mm, fill = black] (0, -2)edge[bend right] (2,0);
          \draw [line width = 0.25mm, fill = black] (0, -2)edge (-2,0);
          \draw [line width = 0.25mm, fill = black]  (2,0)edge (-2,0) ;
          \draw (2,0) edge[bend right] (0,-2);
          
          \draw [line width=0.25mm, fill=black] (0, 2) circle (1.5mm);
          \draw [line width=0.25mm, fill=black] (0, -2) circle (1.5mm);
          \draw [line width=0.25mm, fill=white] (-2,0) circle (1.5mm);
          \draw [line width=0.25mm, fill=white] (2,0) circle (1.5mm);
          
          \draw [line width=1mm, red] plot [smooth, tension=0] coordinates{(-2.25, 1) (-2,1) (-1, 1) (1, 1) (2, 1) (2.25, 1)};

          \draw [line width=1mm, red] plot [smooth, tension=0] coordinates{(-1, -1) (0, 0) (1, 1)};
          \draw [line width=1mm, red] plot [smooth, tension=1] coordinates{(1, 1) (0, 2.5) (-1, 1)};
          \draw [line width=1mm, red] plot [smooth, tension=0] coordinates{(-1, 1) (0, 0) (.7, -.7)};
          \draw [line width=1mm, red] plot [smooth, tension=1] coordinates{(.7, -.7) (.8, -1.2) (1.3, -1.3)};
          \draw [line width=1mm, red] plot [smooth, tension=1] coordinates{(1.3, -1.3) (2, -1.3) (2.25, -1.3)};
          
          \draw [line width=1mm, red] plot [smooth, tension=0] coordinates{(-2.25, -1) (-2, -1) (-1, -1)  (.7, -.7)};
          \draw [line width=1mm, red] plot [smooth, tension=1] coordinates{(.7, -.7) (1.2, -.8) (1.3, -1.3)};
          \draw [line width=1mm, red] plot [smooth, tension=0] coordinates{(1.3, -1.3)};
          \draw [line width=1mm, red] plot [smooth, tension=1] coordinates{(1.3, -1.3) (0, -2.5) (-1, -1)};
          \draw [line width=1mm, red] plot [smooth, tension=0] coordinates{(-1, -1)};
    \end{tikzpicture}
\end{center}
\end{ex}

\begin{rem}
For an edge $e$ of cprn $\Gamma$, $m_e$ has degree $4$ in $M(\Gamma)$. This means that if we approach $m_e$ via some edge in $M(\Gamma)$, we can turn right, turn left, or go straight.
\end{rem}

\begin{defn}
We define an equivalence relation $\equiv$ on the edges of $M(\Gamma)$ as follows.  For every $e$ an edge of $\Gamma$ with corresponding vertex $m_e$ in $M(\Gamma)$, let the four medial edges adjacent to $m_e$ be $v_{e,1}, v_{e,2}, v_{e,3},$ and $v_{e,4}$ in clockwise order. Then $v_{e,1} \equiv v_{e,3}$ and $v_{e,2} \equiv v_{e,4}$. Then the \textit{medial strands} of $M(\Gamma)$ are the equivalence classes of edges of $M(\Gamma)$ induced by $\equiv$.
In other words, a medial strand is a path in $M(\Gamma)$ where we always go straight through a vertex $m_e$.  We frequently smooth the corners of a strand in depictions of the medial graph.
\end{defn}

\begin{ex}
The medial graph depicted in Example~\ref{ex:medial-graph} has two medial strands, as shown below.
\begin{center}
    \begin{tikzpicture}[scale=1]
          \draw [line width = 0.25mm, fill = black] (0, 2)edge (2,0);
          \draw [line width = 0.25mm, fill = black] (0, 2)edge (-2,0);
          \draw [line width = 0.25mm, fill = black] (0, -2)edge[bend right] (2,0);
          \draw [line width = 0.25mm, fill = black] (0, -2)edge (-2,0);
          \draw [line width = 0.25mm, fill = black]  (2,0)edge (-2,0) ;
          \draw (2,0) edge[bend right] (0,-2);
          
          \draw [line width=0.25mm, fill=black] (0, 2) circle (1.5mm);
          \draw [line width=0.25mm, fill=black] (0, -2) circle (1.5mm);
          \draw [line width=0.25mm, fill=white] (-2,0) circle (1.5mm);
          \draw [line width=0.25mm, fill=white] (2,0) circle (1.5mm);
          
          \draw [line width=1mm, blue] plot [smooth, tension=1] coordinates{(-2.5,.5) (-1, 1) (1, 1) (2.5, .5)};
          \draw [line width=1mm, red] plot [smooth, tension=.5] coordinates{(0, 0) (1, 1) (.5, 2.25) (0, 2.5) (-.5, 2.25) (-1, 1) (0, 0) (.7, -.7)};
          \draw [line width=1mm, red] plot [smooth, tension=1] coordinates{(.7, -.7) (1.2, -.8) (1.3, -1.3)};
          \draw [line width=1mm, red] plot [smooth, tension=1] coordinates{(1.3, -1.3) (2.5, -.5)};
          \draw [line width=1mm, red] plot [smooth, tension=.5] coordinates{(-2.5, -.5) (-1, -1)  (.7, -.7)};
         \draw [line width=1mm, red] plot [smooth, tension=1] coordinates{(.7, -.7) (.8, -1.2) (1.3, -1.3)};
        \draw [line width=1mm, red] plot [smooth, tension=.5] coordinates{(1.3, -1.3) (.5, -2.25) (0, -2.5) (-.5, -2.25) (-1, -1) (0, 0)};
    \end{tikzpicture}
\end{center}
\end{ex}

\begin{rem}
Each vertex $t_i$ has degree $1$ in $M(\Gamma)$.  This means that each $t_i$ is an endpoint of exactly one medial strand.
\end{rem}

\begin{defn}\cite{curtis_ingerman_morrow}
Let $\Gamma$ be a cprn with $n$ boundary vertices, $v_1,...,v_n$ clockwise, and let $t_1,...,t_{2n}$ be the endpoints of medial strands. Label the strand beginning at $t_1$ as with a 1.  The remaining strands are labeled 2 through $n$ so that if $i<j$, the endpoints of the strand $i$ are $t_a$ and $t_b$, and the endpoints of strand $j$ are $t_c$ and $t_d$, we have either $a<c,d$ or $b<c,d$.  In other words, the first endpoint of strand $i$ appears clockwise from $t_1$ before the first endpoint of $j$.  For each $i\in\{1, 2, \cdots, 2n\}$ let $z_i$ be the number associated with the strand with an endpoint at $t_i$. The sequence $z_1,...,z_{2n}$ is called the \textit{$z$-sequence} of $\Gamma$.
\end{defn}

We can now make some analogous definitions for rnpds.

\begin{defn}
For an rnpd $\Gamma$ we can define the \textit{medial graph} $M(\Gamma)$ of $\Gamma$ to be as for a cprn, treating the interior boundary vertex as an internal vertex.  We obtain \emph{medial strands} as in the cprn case.
\end{defn}

\begin{ex}\label{ex:rnpd-medial-graph}
The medial graph of the rnpd in Figure \ref{fig:cprn-rnpd} is 
\begin{center}
    \begin{tikzpicture}[scale=1]
          \draw [line width = 0.25mm, fill = black] (0, 2)edge (2,0);
          \draw [line width = 0.25mm, fill = black] (0, 2)edge (-2,0);
          \draw [line width = 0.25mm, fill = black] (0, -2)edge (2,0);
          \draw [line width = 0.25mm, fill = black] (0, -2)edge (-2,0);
          \draw [line width = 0.25mm, fill = black]  (2,0)edge (0,0) ;
          \draw [line width = 0.25mm, fill = black]  (-2,0)edge (0,0) ;
          \draw [line width = 0.25mm, fill = black]  (0,0)edge (0,-2) ;
          \draw [line width = 0.25mm, fill = black]  (0,0)edge (0,2) ;

          \draw [line width=0.25mm, fill=black] (0, 2) circle (1.5mm);
          \draw [line width=0.25mm, fill=black] (0, -2) circle (1.5mm);
          \draw [line width=0.25mm, fill=white] (-2,0) circle (1.5mm);
          \draw [line width=0.25mm, fill=white] (2,0) circle (1.5mm);
          \draw [line width=0.25mm, fill=white] (0,0) circle (1.5mm);
          
          \draw [line width=1mm, red] plot [smooth, tension=.5] coordinates{(-2.5, -.5) (0, -1) (1, 1) (0, 2.5) (-1, 1) (0, -1)  (2.5, -.5)};
          \draw [line width=1mm, blue] plot [smooth, tension=.5] coordinates{(-2.5, .5) (-1, 1) (0, 1) (1, 0) (1, -1) (0, -2.5) (-1, -1) (-1, 0) (0, 1) (1, 1) (2.5, .5)};
    \end{tikzpicture}
\end{center}
\end{ex}

\begin{defn}
For an rnpd $\Gamma$, we can define the \textit{z-sequence} by beginning with the construction for cprns to get some permutation of the multiset $\{1,1,2,2,...,n,n\}$.  For each $i$, label one strand endpoint of $s_i$ as $s_i^+$ and the other $s_i^-$ such that the strand from $s_i^-$ to $s_i^+$ moves clockwise around $b$. Additionally, if a strand contains a self-intersection, put a bar over its endpoints' labels.  The $z$-sequence of $\Gamma$ is the sequence of labels of strand endpoints, starting at $t_1$ going clockwise.  
\end{defn}

\begin{ex}\label{ex:rnpd-z-seq}
Consider the rnpd from Example~\ref{ex:rnpd-medial-graph}.  If the boundary vertex on the left is $v_1$ and the boundary vertex on the right is $v_2$, then the $z$-sequence for this rnpd is $\overline{1^+},\overline{2^-},\overline{2^+},\overline{1^-}$.
\end{ex}

\begin{defn}
A \textit{motion} on a triangular face $ABC$ of a medial graph is defined by the following (local) transformation:

\begin{center}
    \begin{tikzpicture}[scale=2]

          \draw [line width=1mm, blue] plot [smooth, tension=1] coordinates{(.766, 0.642)  (-0.939, -0.342)};
          
          \draw [line width=1mm, red] plot [smooth, tension=1] coordinates{(.766, -0.642)  (-0.939, 0.342)};
          
          \draw [line width=1mm, green] plot [smooth, tension=1] coordinates{(0.173, -0.984)  (0.173, 0.984)};
          
          \draw [line width=1mm, blue] plot [smooth, tension=1] coordinates{(3.766, 0.642) (3.185, -.320) (3-0.939, -0.342)};
          
          \draw [line width=1mm, red] plot [smooth, tension=1] coordinates{(3.766, -0.642) (3.185, .320) (3-0.939, 0.342)};
          
           \draw [line width=1mm, green] plot [smooth, tension=1] coordinates{(3.173, -0.984) (3-.370, 0) (3.173, 0.984)};
           
          \node at (1.4,0){\LARGE$\leftrightarrow$};
    \end{tikzpicture}
\end{center}

\end{defn}

\begin{rem}
Medial graph motions correspond directly to $Y-\Delta$ moves, so the graphs of two rnpds are $Y-\Delta$ equivalent if and only if their medial graphs are equivalent by motions.
\end{rem}

\begin{defn}
Suppose $s'$ and $t'$ are segments of two medial strands $s$ and $t$ such that $s'$ and $t'$ both have endpoints $a$ and $b$, $s'$ and $t'$ do not intersect themselves or each other between $a$ and $b$, and the medial edges in $s$ and $t$ adjacent to $a$ and $b$ but not on $s'$ and $t'$ are outside the region enclosed by $s'$ and $t'$.
Define a \textit{medial lens} to be the region enclosed by $s$ and $t$ between $a$ and $b$.
\end{defn}

\begin{defn}
Define a \textit{medial loop} to be the region enclosed by a medial strand segment whose endpoints are the same.
\end{defn}

\begin{defn}
Define a \textit{medial circle} to be a medial strand that does not have endpoints.
\end{defn}

\begin{ex}
In the following example medial graph, the green strand is a medial circle. It bounds two lenses with the blue strand. The red strand bounds a medial loop. The blue and red strands bound a medial lens.
\begin{center}
    \begin{tikzpicture}[scale=2]


          \draw [line width=1mm, red] plot [smooth, tension=0.5] coordinates{(-0.939, 0.342) (-0.3, 0) (0, -0.3) (0.3,0) (0, 0.3) (-0.3, 0) (-0.939, -0.342)};
          
          \draw [line width=1mm, blue] plot [smooth, tension=1] coordinates{(0, -0.984)  (0, 0.984)};
          \draw [line width=1mm, green] (0,0) circle (0.5);
    \end{tikzpicture}
\end{center}
\end{ex}


With these definitions in hand, we may state the first of two main theorems of this section.

\begin{thm}
\label{FirstMedialTheorem}
An rnpd is irreducible by $Y-\Delta$ moves, series reductions, parallel reductions, pendant removal, self-edge removal, and antenna jumping if and only if  the following conditions all hold:
\begin{enumerate}[(a)]
    \item $M(\Gamma)$ contains no medial circles.
    \item $M(\Gamma)$ contains at most one self-intersecting strand. Furthermore, if such a strand exists, it intersects itself only once, with the loop it creates containing the interior boundary vertex.
    
    \item Any two distinct medial strands of $M(\Gamma)$ intersect at most twice. Furthermore, if two distinct medial strands intersect twice, creating a lens, the lens they create contains the interior boundary vertex.
\end{enumerate}
\end{thm}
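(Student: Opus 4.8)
The plan is to adapt the medial-graph argument that Curtis, Ingerman, and Morrow \cite{curtis_ingerman_morrow} use in the cprn case, carrying along bookkeeping for the interior boundary vertex, which we call $b$. The first step is to record a dictionary between the six allowed moves and operations on $M(\Gamma)$. By the Remark above, a $Y$-$\Delta$ move is a motion, and I would argue that an antenna jump is the corresponding local reconnection of $M(\Gamma)$ near the medial vertex of the antenna edge; the key point is that motions and antenna jumps leave invariant the number of medial circles, the self-intersection number of each strand, the intersection number of each pair of strands, and the position of $b$ relative to any lens or loop, hence they preserve each of the conditions (a), (b), (c). The remaining four moves (loop removal, pendant removal, series reduction, parallel reduction) are exactly the ones that strictly decrease the number of edges, and each is the deletion of an empty local piece of $M(\Gamma)$: loop and pendant removal delete an empty medial circle, a series reduction (applicable only at an internal degree-two vertex, never at $b$) deletes an empty medial lens whose interior is a single internal vertex, and a parallel reduction deletes an empty medial lens whose interior is a single face of $\Gamma$. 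So an rnpd admits an edge-reducing move if and only if, after some sequence of motions and antenna jumps, its medial graph contains either an empty medial circle or an empty medial lens not containing $b$.

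For the ``if'' direction, suppose $M(\Gamma)$ satisfies (a)--(c) but $\Gamma$ is reducible; take a sequence of moves of minimal length ending in a strictly smaller edge count. Its last move is a loop removal, pendant removal, series reduction, or parallel reduction, and every earlier move is a motion or antenna jump, so by the invariance above the rnpd $\Gamma'$ reached just before the last move still satisfies (a)--(c). But $\Gamma'$ admits one of those four reductions, hence $M(\Gamma')$ contains an empty medial circle, contradicting (a), or an empty medial lens whose interior is a single internal vertex or a single face of $\Gamma'$, and such a lens does not contain $b$, contradicting (c). Hence $\Gamma$ is irreducible.

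For the ``only if'' direction I would prove the contrapositive: if one of (a), (b), (c) fails, then $\Gamma$ is reducible. The tool is the rnpd version of the innermost-lens lemma of \cite{curtis_ingerman_morrow}: if $M(\Gamma)$ contains a medial circle, a medial loop, or a medial lens, choose such a region $R$ of minimal enclosed area; then one shows that by a sequence of motions (flipping the triangular faces inside $R$, using antenna jumps to slide $b$ out of a triangular face that blocks such a flip) either $R$ becomes empty, so $\Gamma$ admits a loop, pendant, series, or parallel reduction, or one extracts a strictly smaller bad region, contradicting minimality of $R$. To deploy this one feeds it a bad region that does not contain $b$: if (c) fails, either two distinct strands cross at least three times, in which case consecutive crossings cut out lenses with pairwise disjoint interiors and at least one of them misses $b$, or two strands cross exactly twice with their lens missing $b$; if (b) fails, there is a self-intersecting strand whose loop misses $b$, or two self-intersecting strands, or a strand with at least two self-intersections, and in each case a short analysis of the strand arrangement produces a medial loop or lens missing $b$; and if (a) fails, a medial circle is present, and following the cprn argument (which does not interact with $b$, since $b$ can be removed from the interior of the circle by antenna jumps when it is an antenna, and otherwise the reduction extracted is disjoint from $b$) one reduces $\Gamma$.

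The step I expect to be the main obstacle is establishing the innermost-lens lemma with the interior boundary vertex present: one must confirm that the only way emptying $R$ by motions can be blocked is by $b$ lying inside $R$ (which is exactly the case conditions (b), (c) permit, and which yields no reduction), that antenna jumps are enough to resolve the remaining positional issues with $b$, and that the degenerate empty configurations inside $R$ correspond precisely to the four edge-reducing moves above. Within that, the self-intersection bookkeeping for condition (b) -- ruling out that every loop formed by a twice-self-intersecting strand, or by each of two self-intersecting strands, must contain $b$ -- is the most delicate part, and is where I would concentrate the argument.
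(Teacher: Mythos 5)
Your easy direction is essentially the paper's, modulo two slips worth fixing: pendant and self-edge removal correspond to an \emph{empty medial loop} (a once-self-intersecting strand whose loop encloses only the pendant vertex or an empty face), not to a medial circle, so the contradiction in that case is with condition (b), not (a); and antenna jumps do \emph{not} preserve the pairwise intersection numbers of strands --- they slide the small loop around $b$ across a neighbouring strand and thereby change crossings --- which is precisely why the paper states the weaker invariance it actually needs (circles stay circles, and lenses and loops not containing $b$ stay lenses and loops not containing $b$).

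The genuine gap is in the forward direction, at exactly the two places you defer. For (b), it is not true that a failure of (b) yields, by inspecting the strand arrangement, a loop or lens missing $b$: two nested self-intersection loops can both contain $b$. The paper's Lemma~\ref{OneMedialLoop} handles this with a genuinely new argument: using Lemma~\ref{CanMakeTriangleEmpty} it first makes the innermost loop a face of the medial graph, which by Lemma~\ref{NoEmptyLenses} forces that face to be the antenna at $b$; it then empties the region between the two loops; and only \emph{then} does an antenna jump create a lens or loop not containing $b$ (their cases 1A, 1B, 2), contradicting Lemma~\ref{NoEmptyLenses}. Your outline has no substitute for this step, and your proposed use of antenna jumps to slide $b$ out of a blocking triangle does not work in general, since $b$ need not have degree one at that stage; the paper instead arranges all emptying so that the triangles being flipped never contain $b$. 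For (a), the cprn argument does not dispose of medial circles here: the emptying argument only excludes circles \emph{not} containing $b$ (Lemma~\ref{NoEmptyLenses}(c)), and a circle containing $b$ has no cprn analogue. The paper's Lemma~\ref{NoMedialCircles} devotes a three-case analysis (on the number of self-intersections of the circle, using the strong restriction of $\Gamma$ to the inside of the circle, the at-most-one-loop fact, and connectedness) to rule these out, so your parenthetical claim that the extracted reduction ``does not interact with $b$'' is unfounded. Since these two lemmas are the rnpd-specific core of the theorem, the proposal as written is an outline of the known cprn machinery plus the easy bookkeeping, with the new steps missing.
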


We first prove the backwards direction.

\begin{proof}

Edges in series and parallel induce a medial lens face in the medial graph. Pendants and self-edges induce a medial loop face in the medial graph. Thus, no graph satisfying (b) and (c) can be reduced only by series reductions, parallel reductions, self-edge removal, and pendant removal.\\
Medial circles remain medial circles under medial graph motions and antenna jumping. Similarly, medial lenses and medial loops not containing $b$ remain medial lenses and medial loops not containing $b$ under motions and antenna jumping.
Thus, if a graph satisfies properties (a), (b), and (c), it will always satisfy the properties after $Y-\Delta$ moves and antenna jumping, and hence will never be able to be reduced by series reductions, parallel  reductions,  pendant removal, or self-edge  removal.
\end{proof}

To prove the forwards direction, we require several lemmas.

\begin{lemma}
\label{NoEmptyLenses}
In the medial graph of an  rnpd irreducible by $Y-\Delta$ moves, series reductions, parallel reductions, pendant removal, and self-edge removal:
\begin{enumerate}[(a)]
    \item Any medial lens contains the interior boundary vertex.
    \item Any medial loop contains the interior boundary vertex.
    \item Any medial circle contains the interior boundary vertex.
\end{enumerate}
\begin{proof}
Suppose our medial graph has a medial lens.  We follow the algorithm in Lemma 6.2 of \cite{curtis_ingerman_morrow}, noting that we may still perform motions on any triangular face in the medial graph that does not contain the boundary vertex.  For a lens that does not contain the boundary vertex, none of its faces contain the boundary vertex. Following the algorithm in Lemma 6.2 will never cause the lens to contain new vertices.  Thus, for rnpds, any medial graph with a medial lens not containing the boundary vertex is equivalent by motions to one with a lens (still not containing the boundary vertex) as a face.

We can apply the same argument for medial loops, in which case we obtain a medial loop face, and medial circles, in which case we obtain a medial circle face.

Lens faces may be reduced by a series or parallel transformation (depending on whether a vertex is inside the lens) and medial loop faces may be reduced by pendant removal or loop removal (depending on whether a vertex is inside the loop).  In both cases, we have a contradiction because our graph was reduced. Medial circle faces cannot exist.  
\end{proof}
\end{lemma}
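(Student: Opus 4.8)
The plan is to argue the contrapositive, adapting the lens-reduction algorithm of Curtis, Ingerman, and Morrow (their Lemma 6.2 in \cite{curtis_ingerman_morrow}) to the punctured-disk setting. Suppose $M(\Gamma)$ contains a medial lens that does not contain the interior boundary vertex $b$, and let $R$ denote the closed region it encloses; I will handle loops and circles afterward by the same template. Since $b \notin R$, every face of $M(\Gamma)$ lying inside $R$ also avoids $b$, so by the remark that medial-graph motions correspond to $Y-\Delta$ moves, every motion supported on a triangular face contained in $R$ is an admissible move on $\Gamma$.

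The first step is to run the reduction algorithm on the lens, restricted so that it only performs motions on triangular faces contained in $R$. The algorithm empties a lens by repeatedly performing motions on triangular faces interior to it, and I would check that in our setting (i) each such motion is permitted because its supporting face lies in $R$ and hence avoids $b$, and (ii) each motion leaves $b$ outside the (possibly modified) lens, since a motion only rearranges three strand segments locally within $R$ and cannot carry $b$ across a strand. It follows that the algorithm runs to completion and yields a medial graph, $Y-\Delta$ equivalent to $M(\Gamma)$, in which the lens appears as an honest face still avoiding $b$.

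Once the lens is realized as a face, a single reduction finishes part (a). If a vertex of $\Gamma$ lies inside the lens face, the two edges of $\Gamma$ forming the lens share that vertex and are in series, so a series transformation contracts them; if no vertex lies inside, those two edges are parallel and a parallel transformation merges them. In either case the number of edges of $\Gamma$ strictly decreases by a move from the allowed list, contradicting irreducibility.

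Parts (b) and (c) follow the same two steps. Replacing ``lens'' by ``loop'' produces a medial loop face avoiding $b$, which is reduced by pendant removal when a vertex of $\Gamma$ sits inside it and by self-edge removal otherwise; replacing ``lens'' by ``circle'' produces a medial circle face avoiding $b$, but such a face cannot occur in the medial graph of a connected network, so the contradiction is immediate. The main obstacle I anticipate is the verification in the second paragraph: confirming that, with the puncture placed outside $R$, the reduction algorithm never calls for a motion on a face containing $b$ and never drags $b$ into the shrinking region, so that the entire reduction stays within the rnpd-legal moves.
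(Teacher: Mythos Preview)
Your proposal is correct and follows essentially the same approach as the paper: both argue by contradiction, invoke the lens-emptying algorithm of Lemma~6.2 in \cite{curtis_ingerman_morrow} while observing that all required motions lie on triangular faces avoiding $b$ (hence correspond to legal $Y{-}\Delta$ moves), and then dispatch the resulting empty lens/loop/circle face via series/parallel, pendant/self-edge removal, or the nonexistence of circle faces. Your write-up is a bit more explicit about why $b$ never enters the shrinking region, but the underlying argument is the same.
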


\begin{lemma}
\label{CanMakeTriangleEmpty}
Let $p,q,r$ be pairwise-intersecting medial strands in an rnpd $\Gamma$ irreducible by $Y-\Delta$ moves, series reductions, parallel reductions, pendant removal, and self-edge removal.  Define $v_{qr}$ be an intersection of strands $q$ and $r$ and define $v_{pr}$, and $v_{pq}$ similarly. 
Suppose that the segments of strands $p,q$, and $r$ between $v_{pr}$ and $v_{pq}$, $v_{pq}$ and $v_{qr}$, and $v_{pr}$ and $v_{qr}$ form the sides of a triangle $T$.  That is, these strand segments do not intersect themselves or each other except for at $v_{qr},v_{pr}$, and $v_{pq}$.
If $T$ does not contain $b$, then we can apply a sequence of motions on faces of $T$ to obtain a network where $T$ is a face of the medial graph.

\begin{proof}
We prove this inductively on the number of faces inside $T$. If $T$ only has one face, no motions are required.

If $T$ has more than one face, some strand $s$ intersects the boundary of $T$ twice.  If these intersections occur on the same strand, there would be a lens inside $T$.  Since $T$ does not contain $b$, this contradicts Lemma~\ref{NoEmptyLenses}.  So, without loss of generality, we can assume these intersections occur on strands $q$ and $r$ and that the intersection points are $v_{qs}$ and $v_{rs}$. Then $q,r$, and $s$ enclose a region $U$ with vertices $v_{qr}$, $v_{qs}$, and $v_{rs}$. Since $U$ is strictly inside $T$, $U$ has fewer faces than $T$. Thus, by induction, we can perform motions so that there are no intersections inside $U$.  These motions will not increase the number of faces in $T$. Next we can perform a motion to move $U$ outside $T$, decreasing the number of faces in $T$ by one.  By induction, we can perform another sequence of motions to make $T$ a face of the medial graph.
\end{proof}
\end{lemma}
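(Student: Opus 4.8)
The plan is to induct on the number of faces of $M(\Gamma)$ contained in the triangle $T$, or equivalently on the number of medial strand intersection points lying in the interior of $T$. In the base case $T$ encloses a single face, so $T$ is already a face of $M(\Gamma)$ and no motions are required. For the inductive step, assume $T$ contains at least two faces. Then some medial strand $s$ meets the interior of $T$ other than along $\partial T$. Since $M(\Gamma)$ contains no empty medial circle and $b\notin T$, the strand $s$ cannot be a circle trapped inside $T$ (Lemma~\ref{NoEmptyLenses}(c)), so $s$ both enters and leaves $T$ and hence crosses $\partial T$ an even number of times, at least twice. Choose two crossings that are consecutive along $s$, so that the arc $s'$ of $s$ between them lies in the interior of $T$.

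Next I would do a case analysis on which of the three sides of $T$ the two endpoints of $s'$ lie on. If both endpoints lie on the same side, say on the arc of $p$, then $s'$ together with the subarc of $p$ between the two crossings bounds a region contained in $T$; passing to an innermost such configuration, this region is a medial lens, and it cannot contain $b$ because $b\notin T$, contradicting Lemma~\ref{NoEmptyLenses}(a). (If $s$ coincides with one of $p,q,r$ and the two endpoints of $s'$ lie on the same side, one instead obtains a medial loop inside $T$, contradicting Lemma~\ref{NoEmptyLenses}(b).) So the two endpoints of $s'$ lie on two different sides; without loss of generality these are the sides on $q$ and $r$, at points $v_{qs}$ and $v_{rs}$. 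Then the arc of $q$ from $v_{qr}$ to $v_{qs}$, the arc of $r$ from $v_{qr}$ to $v_{rs}$, and the arc $s'$ from $v_{qs}$ to $v_{rs}$ enclose a region $U$ with corners $v_{qr},v_{qs},v_{rs}$. Choosing the configuration to be innermost, $U$ is again a triangle in the sense of the lemma, $U$ is strictly contained in $T$, and so $U$ encloses strictly fewer faces than $T$.

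By the inductive hypothesis applied to $U$ (noting $b\notin U$ since $U\subseteq T$), a sequence of motions supported inside $U$ makes $U$ a face of the medial graph; because these motions are confined to $U\subseteq T$, they do not increase the number of faces in the interior of $T$. Once $U$ is an empty triangular face, performing the motion on $U$ slides the arc $s'$ across the vertex $v_{qr}$, moving $U$ out of $T$ and removing one face from the interior of $T$. Applying the inductive hypothesis once more to the resulting smaller triangle $T$ completes the argument; at the level of the underlying network, each motion is a legal $Y$-$\Delta$ move by the remark identifying motions with $Y$-$\Delta$ moves.

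The main obstacle I expect is making the repeated appeal to an ``innermost'' configuration precise: one must argue that $s$ and its crossing points with $\partial T$ can always be chosen so that the enclosed region $U$ is genuinely a triangle whose three sides are pairwise nonintersecting, self-intersection-free strand segments, and so that $U$ is strictly interior to $T$ so that the inductive measure strictly decreases. The content of Lemma~\ref{NoEmptyLenses} — the absence of empty lenses, loops, and circles — is exactly what rules out the degenerate alternatives, so the remaining work is bookkeeping that organizes these exclusions rather than any new electrical input.
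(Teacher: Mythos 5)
Your proposal is correct and follows essentially the same route as the paper: induct on the number of faces in $T$, find a strand segment crossing two distinct sides of $T$ (ruling out the same-side case via an empty lens/loop contradiction with Lemma~\ref{NoEmptyLenses}), form the smaller triangle $U$ at the corner $v_{qr}$, empty it by induction, and push it out of $T$ with a single motion. The extra details you supply (excluding a medial circle trapped in $T$, choosing consecutive crossings, and flagging the ``innermost'' bookkeeping) are refinements of, not departures from, the paper's argument.
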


\begin{lemma}
\label{OneMedialLoop}
In an rnpd irreducible by $Y-\Delta$ moves, series reductions, parallel reductions, pendant removal, self-edge removal, and antenna jumping with no medial circles, there exists at most one medial loop. That is, there exists at most once medial strand with a self-intersection, and if such a strand exists, it intersects itself only once.
\begin{proof}

For the sake of contradiction, suppose we have at least two medial loops. Let $\ell$ be a medial loop contains no other medial loops within its interior.  By the previous lemma, the interior boundary vertex $b$ must be inside $\ell$. 

Claim 1: We can use motions to make $\ell$ a face in the medial graph. 

Any strand segment $s$ in $\ell$ with boundary on $\ell$ contains no self-intersection. Then it divides $\ell$ into a lens and a triangle $T$. The lens must then contain the interior boundary vertex $b$, so $T$ does not. By Lemma~\ref{CanMakeTriangleEmpty} we can use motions to make $T$ empty.  These motions do not increase the number of strand segments in $\ell$.  After we make $T$ empty, a motion on $T$ decreases the number of strand segments in $\ell$. Applying this process repeatedly, we can make $\ell$ a face, proving Claim 1.

By Claim 1, up to electrical equivalence, we may assume $\ell$ is a face in the medial graph. This face must correspond to the vertex $b$ of the rnpd by part (b) of Lemma~\ref{NoEmptyLenses}.  Since every medial loop contains $b$, every medial loop contains $\ell$. Without loss of generality, let $\ell'$ be such that $\ell$ is the only loop contained in $\ell'$.

Claim 2: We can use motions to make $\ell'$ contain only $\ell$.

Let the self-intersection of $\ell'$ be $a$. As in the proof of Claim 1, any strand segment $s$ in $\ell'$ with endpoints $s_1$ and $s_2$ on $\ell'$ contains no self-intersection. Since $s$ cannot intersect $\ell$ or create a lens, $s_1$, $s_2$, and $a$ create a triangle $T$ that does not contain $b$. 

By Lemma~\ref{CanMakeTriangleEmpty} we can use motions to make $T$ empty.  These motions do not increase the number of strand segments in $\ell'$.  After we make $T$ empty, a motion on $T$ decreases the number of strand segments in $\ell'$. Applying this process repeatedly, we can make $\ell'$ contain only $\ell$, proving Claim 2.

Now we're in one of three situations, shown below. 

\begin{center}
    \begin{tikzpicture}[scale=2]

          \draw (-0.5,1.25) node {Case 1A:};
          \draw [line width=1mm, red] plot [smooth, tension=1] coordinates{(0, 1) (0, 0) (-0.5,-.3) (-1, 0) (-0.5,.3) (0, 0) (0, -1)};
          \draw [line width=1mm, blue] plot [smooth, tension=1] coordinates{(-1.2, 1) (-1.2, 0) (-0.5,-.5) (0.2, 0) (-0.5,.5) (-1.2, 0) (-1.2, -1)};
          
          \draw (2,1.25) node {Case 1B:};
          \draw [line width=1mm, red] plot [smooth, tension=1] coordinates{(1.5, 1) (1.5, 0) (2,-.3) (2.5, 0) (2,.3) (1.5, 0) (1.5, -1)};
          \draw [line width=1mm, blue] plot [smooth, tension=1] coordinates{(1.3, 1) (1.3, 0) (2,-.5) (2.7, 0) (2,.5) (1.3, 0) (1.3, -1)};
          
          \draw (4.25,1.25) node {Case 2:};
          \draw [line width=1mm, red] plot [smooth, tension=0.6] coordinates{(3.5, 1) (3.5, 0) (4,-0.7) (4.8,-0.7) (5,0) (4.2,0.3) (4, 0) (4.2,-0.3) (5,0) (4.8,0.7) (4,0.7) (3.5, 0) (3.5, -1)};

    \end{tikzpicture}
\end{center}

In cases 1A and 1B, antenna jumping yields a medial lens not containing $b$ (note the parallel edges in 1A and the parallel edges after a $Y-\Delta$ move in 1B). In case $2$, antenna jumping yields a medial loop not containing $b$ (note the self-edge).  This is a contradiction with Lemma~\ref{NoEmptyLenses}, so we have proven the lemma.
\end{proof}
\end{lemma}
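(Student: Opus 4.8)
Since a strand that self-intersects more than once, or two strands that each self-intersect, both produce at least two medial loops, it is enough to prove that $M(\Gamma)$ has at most one medial loop. The plan is to argue by contradiction: assuming $M(\Gamma)$ has at least two medial loops, I will contradict irreducibility. As there are only finitely many loops, I would choose an \emph{innermost} one, $\ell$ --- a medial loop whose interior contains no other medial loop --- and recall that the interior boundary vertex $b$ lies inside $\ell$ by Lemma~\ref{NoEmptyLenses}(b).

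First I would show that, after a sequence of motions (so up to electrical equivalence), $\ell$ may be taken to be a face of $M(\Gamma)$. A strand segment $s$ passing through the interior of $\ell$ with both endpoints on $\partial\ell$ cannot self-intersect inside $\ell$, since that would create a medial loop strictly inside $\ell$, contradicting minimality. Thus $s$ splits $\ell$ into a lens (bounded by $s$ and the arc of $\partial\ell$ avoiding the base point of $\ell$) and a triangle $T$ (bounded by $s$ together with the two arcs of $\partial\ell$ meeting at the base point). By Lemma~\ref{NoEmptyLenses}(a) the lens contains $b$, so $T$ does not; by Lemma~\ref{CanMakeTriangleEmpty} a sequence of motions makes $T$ empty without increasing the number of strand segments crossing $\ell$, and one further motion on the empty $T$ removes a segment from $\ell$. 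Iterating, $\ell$ becomes a face.

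Once $\ell$ is a face, the region it bounds contains $b$ (Lemma~\ref{NoEmptyLenses}(b)) and no medial vertices, so by irreducibility it cannot be removed by pendant or self-edge removal; this forces $b$ to be an antenna (a degree-one interior boundary vertex). Every other medial loop also contains $b$, hence contains the face $\ell$, so I may choose a second loop $\ell'$ whose interior contains $\ell$ but no other loop. Re-running the argument above inside $\ell'$ --- a segment there cannot enter the face $\ell$, and a $b$-free lens is forbidden by Lemma~\ref{NoEmptyLenses}, so such a segment forms a $b$-free triangle with the base point of $\ell'$ that motions can empty --- I can arrange that $\ell'$ contains nothing but $\ell$. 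This reduces the problem to one of a handful of explicit local pictures: the loop $\ell'$, the face $\ell$ inside it, and the antenna $b$, differing only in the relative position of the two strands and the edge at $b$.

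The final step, and the one I expect to be the main obstacle, is the case analysis on this local picture. In each case I apply antenna jumping to $b$ and verify that the result contains a medial lens or medial loop not enclosing $b$ --- concretely a parallel pair of edges (in some cases only after one further $Y-\Delta$ move) or a self-edge --- contradicting Lemma~\ref{NoEmptyLenses}. The delicate part is enumerating all configurations and checking that antenna jumping genuinely produces a forbidden feature in each; the routine supporting facts (that motions and antenna jumping create no medial circles, nor lenses or loops omitting $b$) are the same invariances used in the backward direction of Theorem~\ref{FirstMedialTheorem}.
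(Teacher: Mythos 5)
Your proposal follows the paper's proof essentially step for step: pick an innermost loop $\ell$ containing $b$, use Lemma~\ref{NoEmptyLenses} and Lemma~\ref{CanMakeTriangleEmpty} to sweep it clean into a face (correctly noting that the lens side of each crossing segment must hold $b$, so the triangle side can be emptied), identify $b$ as an antenna, clean out a second loop $\ell'$ containing only $\ell$, and finish by antenna jumping into a forbidden lens or loop. The paper completes the final case analysis with three explicit pictures (parallel edges, parallel edges after a $Y$-$\Delta$ move, and a self-edge), exactly the configurations you anticipate, so your outline is correct and matches the paper's argument.
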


To prove the next Lemma, we need to introduce some notation.

\begin{defn}
Let $\Gamma$ be a graph with vertex set $V$. Let $V' \subseteq V,\ V''$ be all vertices in $V'$ or adjacent to $V',\ E'$ be the edges of $\Gamma$ with both endpoints in $V'$, and $E''$ be the edges of $\Gamma$ with at least one endpoint in $V'$. Then the \textit{strong restriction} of $\Gamma$ to $V'$ is $\Gamma'=(V',E')$ and the \textit{weak restriction} of $\Gamma$ to $V'$ is $\Gamma''=(V'',E'')$. The boundary vertices of $\Gamma'$ are $(B\cap V')\cup\{v\in V'\ |\ v\text{ is adjacent to a vertex in }V\setminus V'\}$. The boundary vertices of $\Gamma''$ are $(B\cap V'')\cup(V''\setminus V')$.
\end{defn}

\begin{rem}
Both the strong restriction and weak restrictions of $\Gamma$ to $V'$ are \textit{subgraphs} of $\Gamma$.
\end{rem}

\begin{rem}
All graphs $\Gamma$ are equal to the strong restriction to their vertex set. Assuming connectedness, they are also equal to the weak restriction to their interior vertex set.
\end{rem}

\begin{lemma}
\label{NoMedialCircles}
The medial graph of an irreducible rnpd does not contain any medial circles. 
\begin{proof}
For the sake of contradiction, suppose there exists an irreducible rnpd $\Gamma$ whose medial graph $M(\Gamma)$ contains a medial circle $c$. Without loss of generality, assume $c$ contains no medial circles in its interior.
Let $\Gamma'$ be the strong restriction of $\Gamma$ to the vertices inside $c$. Then $M(\Gamma')$ is the restriction of $M(\Gamma)$ to medial edges strictly contained in $c$. Note that $M(\Gamma')$ contains no medial circles and at most one medial loop.
We consider several disjoint cases:\\

Case 1: The medial circle $c$ contains no self-intersection. 

If a strand segment inside $c$ does not have a self-intersection, then the strand creates a two lenses with $c$.  Only one of those lenses can contain the boundary vertex, so by Lemma 2 we find $\Gamma$ is not reduced.  This is a contradiction, so every strand segment inside $c$ with endpoints on $c$ contains a self-intersection.

\newpage
Since $M(\Gamma')$ contains at most one medial loop, $c$ contains at most one strand segment.  The graph is connected, so $c$ can't be empty. The medial graph containing only $c$ and a medial loop as segments is shown below.

\begin{center}
    \begin{tikzpicture}[scale=0.85]
        \draw [line width=1mm, red] (-5,0) circle (1.4);
        \draw [line width=1mm, blue] plot [smooth, tension=1] coordinates{(-4, 2) (-4, 0) (-5,-.6) (-6, 0) (-5,.6) (-4, 0) (-4, -2)};
        \draw [line width=0.25mm, fill=white] (-5,0) circle (1.5mm);
    \end{tikzpicture}
\end{center}

This is not a possible configuration as its network is reducible.\\

Case 2: The medial circle $c$ contains exactly one self-intersection.

The medial circle $c$ cannot form a figure-eight, as that creates two disjoint loops. Thus it must have the following form:
\begin{center}
    \begin{tikzpicture}[scale=0.85]
        \draw [line width=1mm, red] plot [smooth, tension=1] coordinates{(2,0) (1,2) (-2,1) (-2,0) (0,-1) (1,0) (0,1) (-2,0) (-2,-1) (1,-2) (2,0)};
        \draw [line width=0.25mm, fill=white] (0,0) circle (1.5mm);
    \end{tikzpicture}
\end{center}

Just as in case 1, any strand segment inside $c$ without self-intersections creates a lens not containing $b$.
From earlier, $c$ can strictly contain at most one medial loop which can be its only strand. This yields three configurations up to motions for strands in $c$:
\begin{center}
    \begin{tikzpicture}[scale=0.85]
        \draw [line width=1mm, blue] plot [smooth, tension=1] coordinates{(-4, 3) (-4.5, 0) (-5,-.6) (-6, 0) (-5,.6) (-4.5, 0) (-4, -3)};
        \draw [line width=1mm, red] plot [smooth, tension=1] coordinates{(-3,0) (-4,2) (-7,1) (-7,0) (-5,-1) (-4,0) (-5,1) (-7,0) (-7,-1) (-4,-2) (-3,0)};
        \draw [line width=0.25mm, fill=white] (-5,0) circle (1.5mm);
        
        \draw [line width=1mm, blue] plot [smooth, tension=1] coordinates{(-1, 3) (-0.5, 0) (0,-.4) (0.5, 0) (0,.4) (-0.5, 0) (-1, -3)};
        \draw [line width=1mm, red] plot [smooth, tension=1] coordinates{(2,0) (1,2) (-2,1) (-2,0) (0,-1) (1,0) (0,1) (-2,0) (-2,-1) (1,-2) (2,0)};
        \draw [line width=0.25mm, fill=white] (0,0) circle (1.5mm);
        
        \draw [line width=1mm, red] plot [smooth, tension=1] coordinates{(7,0) (6,2) (3,1) (3,0) (5,-1) (6,0) (5,1) (3,0) (3,-1) (6,-2) (7,0)};
        \draw [line width=0.25mm, fill=white] (5,0) circle (1.5mm);
    \end{tikzpicture}
\end{center}

The first two are reducible. The third can't exist, as the medial graph of a connected graph is connected and $c$ is not the full medial graph of any rnpd.\\

Case 3: The medial circle $c$ contains at least two self-intersections.

Let $p$ be a point on $c$. Then without loss of generality, moving along the to reach the first intersection $e_1$ of the strand segment, we create a counterclockwise loop. This must contain $b$. Then, continuing along $c$ to the second intersection $e_2$ we cannot go clockwise, as that creates a lens not containing $b$. Thus we must go counterclockwise, and a segment of $c$ mus look as below.
\begin{center}
    \begin{tikzpicture}[scale=0.85]
        \draw [line width=1mm, red] plot [smooth, tension=1] coordinates{(9,-1) (8,0) (7,2) (4,1) (4,0) (6,-1) (7,0) (6,1) (4,0) (4,-1) (7,-2) (8,0) (9,1)};
        \draw (8.7,-1.3) node {$p$};
        \draw (3.5, 0) node {$e_1$};
        \draw (8.5, 0) node {$e_2$};
        \draw [line width=0.25mm, fill=red] (9,-1) circle (1.5mm);
        \draw [line width=0.25mm, fill=white] (6,0) circle (1.5mm);
    \end{tikzpicture}
\end{center}

Now, let $\ell$ be the shown strand segment of $c$. Then any not-self-intersecting strand segment $s$ in $\ell$ intersecting the boundary of $\ell$ at endpoints $s_1$ and $s_2$ either creates a lens with $\ell$ not containing $b$ or a triangle $T$ with $s_1$, $s_2$, and $e_2$. The first case is reducible. In the second we may apply motions on regions in $T$ to make $T$ empty. Then a motion inside $T$ decreases the number of strand segments intersecting $\ell$. Thus, we may assume every strand segment $s$ in $\ell$ contains a self-intersection. 

Since the loop with self-intersecting vertex $e_1$ is strictly contained in $c$, no other loops strictly in $c$ exist. Thus, no strands may intersect $\ell$. But this is then reducible.
\end{proof}
\end{lemma}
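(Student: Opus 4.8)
The plan is to argue by contradiction: assume some irreducible rnpd $\Gamma$ has a medial graph $M(\Gamma)$ containing a medial circle, and take an \emph{innermost} such circle $c$ (one enclosing no other medial circle). First I would set up a restriction argument to control what can appear strictly inside $c$. Let $\Gamma'$ be the strong restriction of $\Gamma$ to the vertices enclosed by $c$; then $M(\Gamma')$ is exactly $M(\Gamma)$ cut down to the medial edges strictly inside $c$. I claim $\Gamma'$ is still irreducible by Y-$\Delta$ moves, series and parallel reductions, pendant removal, self-edge removal, and antenna jumping: any such move usable inside $\Gamma'$ touches a vertex that is interior in $\Gamma'$ only when it is interior in $\Gamma$, so it would reduce $\Gamma$ as well. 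Since $c$ is innermost, $M(\Gamma')$ has no medial circles, so Lemma~\ref{OneMedialLoop} applies and gives that $M(\Gamma')$ has at most one medial loop; equivalently, at most one medial strand segment strictly inside $c$ self-intersects, and it does so only once. Together with Lemma~\ref{NoEmptyLenses} this is the key structural input.

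Next I would run a case analysis on the number of self-intersections of $c$ itself. In every case the recurring step is: a strand segment $s$ with both endpoints on $c$ and no self-intersection of its own cuts off a region that is either a lens or a triangle with a vertex of $c$; if it is a lens, only one of the two lens-shaped regions bounded by $s$ and $c$ can contain the interior boundary vertex $b$, so the other violates Lemma~\ref{NoEmptyLenses} and forces $\Gamma$ reducible, a contradiction; if it is a triangle $T$ avoiding $b$, Lemma~\ref{CanMakeTriangleEmpty} lets me empty $T$ by motions and then a single motion on $T$ decreases the number of segments meeting $c$. So, after motions, I may assume every strand segment strictly inside $c$ self-intersects. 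By the previous paragraph there is at most one such segment, and connectedness of the medial graph of a connected graph forbids $c$ from being empty, so I am reduced to a short explicit list of pictures: $c$ (possibly with one self-intersection, forced into the non-figure-eight ``petal'' shape, since a figure-eight would split off a loop avoiding $b$) together with a single self-intersecting segment, in one of finitely many positions up to motions. Each such picture I would check by hand to be either electrically reducible (a pendant, self-edge, series, or parallel reduction appears, sometimes only after a Y-$\Delta$ move or an antenna jump) or else impossible because the medial graph is not connected, contradicting irreducibility of $\Gamma$ either way.

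The case where $c$ has two or more self-intersections needs one extra idea, which I expect to be the main obstacle. Here I would fix a point $p$ on $c$ and trace $c$ forward: the first self-intersection $e_1$ encloses a loop, which by Lemma~\ref{NoEmptyLenses} contains $b$, so I may orient things so that it is counterclockwise; then at the second self-intersection $e_2$, turning ``the other way'' would produce a lens not containing $b$ (again contradicting Lemma~\ref{NoEmptyLenses}), so $c$ is forced into a specific local shape, yielding a distinguished arc $\ell$ of $c$ bounded essentially by the passage through $e_1$ and $e_2$. Running the lens/triangle dichotomy \emph{inside} $\ell$, with $e_2$ as the triangle apex, lets me assume no segment inside $\ell$ is non-self-intersecting; but the loop at $e_1$ already uses up the one self-intersection allowed strictly inside $c$, so nothing can lie inside $\ell$, and that configuration is reducible. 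The subtle points to nail down are (i) checking that the innermost restriction really transfers irreducibility, including with respect to antenna jumping, and (ii) being genuinely exhaustive in the small-configuration checks so that ``reducible or disconnected'' covers every leftover picture. Assuming Lemma~\ref{NoEmptyLenses}, Lemma~\ref{CanMakeTriangleEmpty}, and Lemma~\ref{OneMedialLoop}, this completes the contradiction and hence the proof.
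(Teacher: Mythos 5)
Your proposal follows essentially the same route as the paper's proof: the same innermost-circle setup with the strong restriction $\Gamma'$, the same appeal to Lemma~\ref{OneMedialLoop} and Lemma~\ref{NoEmptyLenses} to limit the interior to at most one self-intersecting segment, the same lens/triangle dichotomy resolved via Lemma~\ref{CanMakeTriangleEmpty}, and the same three-case analysis on the self-intersections of $c$ ending in the same small-configuration and connectivity checks. Your explicit remark that irreducibility must transfer to $\Gamma'$ is a sound clarification of a step the paper leaves implicit, but it does not change the argument.
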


\begin{lemma}
\label{NoTwoLenses}
In an rnpd irreducible by $Y-\Delta$ moves, pendant removal, self-edge removal, parallel reductions, series reductions, and antenna jumping, any two medial strands intersect at most twice.

\begin{proof}
Assume for contradiction we have two medial strands $p$ and $q$ that intersect (at least) three times. Without loss of generality, assume $p$ contains no self-intersection. As we move along $q$ let our three intersections be $v_1$, $v_2$, and $v_3$. Then $p$ and $q$ form two lenses - one with endpoints $v_1$ and $v_2$, and one with endpoints $v_2$ and $v_3$. Since these lie on opposite sides of $p$, they have disjoint interiors. Thus at least one medial lens does not contain the interior boundary vertex, a contradiction.
\end{proof}
\end{lemma}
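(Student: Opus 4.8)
The plan is to argue by contradiction. Suppose two medial strands $p$ and $q$ of $M(\Gamma)$ cross at least three times. By Lemma~\ref{OneMedialLoop} at most one medial strand self-intersects, so after relabeling we may assume $p$ is simple. Being a simple arc with both endpoints on the boundary circle, $p$ cuts the disk into two closed subdisks; and since $b$ lies on no medial strand, $b$ lies in the interior of exactly one of them, say $D^{+}$, so that $b\notin D^{-}$ where $D^{-}$ is the other subdisk. Let $v_{1},v_{2},v_{3}$ be the first three points at which $q$ meets $p$, in the order they are encountered along $q$, and write $q[v_i,v_{i+1}]$ for the sub-arc of $q$ between them. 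Each meeting is a transversal crossing (it is a degree-four vertex $m_{e}$ of $M(\Gamma)$, where $q$ goes straight and $p$ goes straight), so $q[v_1,v_2]$ and $q[v_2,v_3]$ lie in opposite sides of $p$; in particular one of them, call it $\gamma$, lies in $D^{-}$. Let $\hat p$ be the sub-arc of $p$ with the same two endpoints as $\gamma$.

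Suppose first that $\gamma$ is simple. Because the only crossings of $p$ and $q$ lying on $\gamma$ are its two endpoints (they are consecutive crossings along $q$), the arcs $\gamma$ and $\hat p$ meet only there, so $\gamma\cup\hat p$ is a simple closed curve; and since $\gamma$ runs through the interior of $D^{-}$ with its endpoints on $\partial D^{-}$, the region $R$ it bounds lies in $D^{-}$. I would then check that $R$ is a \emph{medial lens} in the sense of the definition: $\gamma$ and $\hat p$ are both arcs with the two endpoints of $\gamma$, neither self-intersects ($p$ is simple, and $\gamma$ is simple by assumption), they do not cross each other strictly between those endpoints, and at each endpoint the two medial edges not lying on $\gamma\cup\hat p$ point out of $R$---one continues along $p$, which bounds the side of $p$ containing $R$, and the other runs into the opposite side. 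Granting this, Lemma~\ref{NoEmptyLenses}(a) forces $b\in R$, contradicting $R\subseteq D^{-}$ and $b\notin D^{-}$.

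It remains to treat the case that $\gamma$ is not simple, which can happen only because $q$ is permitted one self-crossing. Since $q$ self-intersects at most once (Lemma~\ref{OneMedialLoop}), if $\gamma$ is not simple then both passages of $q$ through its self-crossing point lie inside $\gamma$, so the medial loop $\ell$ created by that self-crossing satisfies $\ell\subseteq\gamma\subseteq D^{-}$. Then Lemma~\ref{NoEmptyLenses}(b) forces $b\in\ell\subseteq D^{-}$, again contradicting $b\notin D^{-}$. As both cases are impossible, no two medial strands cross three times, which proves the lemma.

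I expect the main obstacle to be care rather than depth: checking precisely that the region $R$ matches the somewhat fussy definition of a medial lens, and being exact about where a possible self-crossing of $q$ sits relative to $v_{1},v_{2},v_{3}$ so as to know which of $q[v_1,v_2],q[v_2,v_3]$ (if either) fails to be simple. Everything else is the elementary observation that a simple arc separates the disk, so a lens or loop confined to the $b$-free component $D^{-}$ contradicts Lemma~\ref{NoEmptyLenses}. Finally, I would note that running this same reasoning on two strands that cross exactly twice immediately gives the ``furthermore'' clause of Theorem~\ref{FirstMedialTheorem}(c), since the lens those two strands bound contains $b$ by Lemma~\ref{NoEmptyLenses}(a).
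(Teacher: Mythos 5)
Your proof is correct and follows essentially the same route as the paper's: both exploit that the two consecutive crossing segments of $q$ lie on opposite sides of the simple strand $p$, so one of the resulting regions avoids $b$, contradicting Lemma~\ref{NoEmptyLenses}. Your extra case handling a self-intersection of $q$ inside one segment (producing a loop in $D^-$ rather than a lens) is a careful refinement of a point the paper's one-line argument glosses over, but it is not a different approach.
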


With this series of lemmas, we can now  prove Theorem \ref{FirstMedialTheorem}.
\begin{proof}
Let $\Gamma$ be an rnpd irreducible by our moves. Part (a) holds by Lemma~\ref{NoMedialCircles}. Part (a) and Lemma~\ref{OneMedialLoop} imply that $M(\Gamma)$ contains at most one medial loop.  By Lemma~\ref{NoEmptyLenses}, if there is a medial loop, then it must contain the interior boundary vertex.  This proves part (b) of the theorem. Finally, by Lemma~\ref{NoTwoLenses}, any two distinct strands of $m(\Gamma)$ intersect at most twice. Lemma~\ref{NoEmptyLenses} tells us that any two strands that do intersect twice must contain the interior boundary vertex in the lens they create, so we have proven part (c) or the theorem.
\end{proof}

\begin{conj}
A graph is irreducible by $Y-\Delta$ moves,  pendant  removal,  self-edge  removal,  parallel reductions, series reductions, antenna jumping, and antenna absorption if and only if it has three medial strands which pairwise intersect twice, there is a medial strand that self-intersects, and it is irreducible by $Y-\Delta$ moves,  pendant  removal,  self-edge  removal,  parallel reductions, series reductions, and antenna jumping.
\end{conj}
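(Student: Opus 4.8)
The plan is to prove the two implications separately, letting Theorem~\ref{FirstMedialTheorem} absorb the partial-move content and isolating the genuinely new work in the interaction with antenna absorption. Write $b$ for the interior boundary vertex. Since the full move set contains the partial one, full-irreducibility immediately forces partial-irreducibility, which by Theorem~\ref{FirstMedialTheorem} is equivalent to conditions (a), (b), (c): no medial circles, at most one self-intersecting strand (its loop containing $b$), and pairwise intersections at most twice (any resulting lens containing $b$). Hence in both directions the partial-irreducibility clause comes for free, and the real assertion is that, among partial-irreducible rnpds, full-irreducibility is equivalent to possessing three strands that pairwise intersect twice (condition X) \emph{together with} a self-intersecting strand (condition Y) that is genuinely distinct from that triple. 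Reading Y as requiring an independent self-intersecting strand is essential: the network $K_4$-with-an-antenna is partial-irreducible, has three pairwise-twice strands, and is antenna-absorbable, so its lone self-intersection must not count as a witness for Y, and indeed there it sits on one of the three strands rather than on a separate one.

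The first preliminary step is to record the effect of antenna absorption on the medial graph, since---unlike the $Y$-$\Delta$ move, which the Remark identifies with a motion---no medial-graph interpretation has yet been given for it. I would compute $M(\Gamma)$ on both sides of the antenna-absorption picture directly: the degree-one interior boundary vertex is a spike, producing a strand that self-intersects once in a loop enclosing $b$, while the surrounding $K_4$ produces three strands that pairwise intersect twice; absorbing the antenna deletes the spike and therefore removes precisely that one self-intersection, leaving the three pairwise-twice strands intact. The structural conclusion to extract is that antenna absorption is applicable exactly when a self-intersecting strand's loop is the triangular face cut out by three pairwise-twice strands that all meet at the base of the antenna---equivalently, when the self-intersecting strand coincides with one of the three pairwise-crossing strands rather than being independent of them.

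For the backward implication I would assume X, Y, and partial-irreducibility and show that no full move reduces the graph; only antenna absorption must be excluded. I would argue contrapositively and invariantly: antenna jumping and motions preserve conditions (a), (b), (c) (by the invariance arguments used in the backward direction of Theorem~\ref{FirstMedialTheorem}) and, crucially, also preserve the property that the self-intersecting strand is distinct from the three pairwise-twice strands. If an absorbable $K_4$-plus-antenna pattern could be reached by such moves, the medial description above would force the self-intersecting strand to lie on one of the three pairwise-crossing strands, contradicting distinctness. Making this rigorous requires using Lemma~\ref{CanMakeTriangleEmpty} and Lemma~\ref{NoEmptyLenses} to control which faces the maneuvers create and to confirm that a separate loop around $b$ cannot be merged onto the triple without first producing an empty lens or loop.

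The forward implication is where I expect the real difficulty, and I would attack it by the contrapositive: a partial-irreducible rnpd lacking either X or a \emph{distinct} self-intersecting strand is antenna-absorbable. If there are fewer than three pairwise-twice strands around $b$, or if the only self-intersection already sits on one of the three, the strategy is to clear intervening faces using motions (made possible by Lemma~\ref{CanMakeTriangleEmpty}), then use antenna jumping to bring $b$ to the base of the resulting triangle, realizing the exact local $K_4$-plus-antenna graph and absorbing. The main obstacle is that antenna absorption is \emph{not} a motion, so it does not commute with the strand-pushing machinery of Section~4: one must verify that the sequence of motions and jumps reaches the precise local graph (not merely the medial pattern), never violates (a), (b), (c), and is actually guaranteed to exist whenever X or distinct-Y fails. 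Controlling this bookkeeping---and ruling out exotic, richly entangled configurations that might be reducible despite superficially resembling the X-and-Y form---is the crux, and is exactly what keeps the statement at the level of a conjecture.
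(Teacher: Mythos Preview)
The statement you are attempting to prove is Conjecture~2 in the paper, and the paper offers \emph{no proof} of it whatsoever---it is explicitly left as a conjecture, sandwiched between the proof of Theorem~\ref{FirstMedialTheorem} and the statement of Theorem~\ref{thm:same-z-seq}. There is therefore nothing to compare your proposal against: the authors do not even sketch an approach.

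Your write-up is not a proof but a strategic outline, and you yourself say as much in the final sentence. Two substantive remarks are still worth making. First, your reinterpretation of the conjecture---reading ``there is a medial strand that self-intersects'' as ``there is a self-intersecting strand \emph{distinct from} the three pairwise-twice strands''---is your own emendation, not the paper's. You motivate it by the $K_4$-with-antenna example, but the paper's text does not support that reading, and it is entirely possible the conjecture as literally stated is simply false (or mis-stated) rather than in need of reinterpretation. Second, the forward-implication strategy you describe (clear faces via Lemma~\ref{CanMakeTriangleEmpty}, jump the antenna into position, absorb) is plausible but, as you concede, the bookkeeping that guarantees one actually reaches the exact local $K_4$-plus-antenna graph---and not merely an equivalent medial pattern---is the entire content of the problem and is not supplied. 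So what you have is a reasonable plan of attack on an open problem, not a proof, and the paper provides nothing further.
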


\begin{thm}\label{thm:same-z-seq}
Two rnpds irreducible by $Y-\Delta$ moves, pendant removal, self-edge removal, parallel reductions, series reductions, and antenna jumping have the same z-sequence if and only if their graphs related by $Y-\Delta$ moves. 

\begin{proof}
Note that motions don't change the relative order of strand endpoints, or whether strands go around the interior boundary vertex clockwise or counterclockwise, so the z-sequence is preserved by motions and thus $Y-\Delta$ moves.

For the other direction, we  induct on the number of strands. If only one strand exists, the theorem holds trivially.

Suppose we have we have two rnpds $\Gamma$ and $\Gamma'$ with multiple strands and the same $z$-sequence. Each strand in $\Gamma$ which does not intersect itself divides $\Gamma$ into two regions. For strand $i$ without a self-intersection, call the region which does not contain the boundary vertex $\Gamma_{i,1}$, and the other $\Gamma_{i,2}$. Choose $s$ such that $\Gamma_{s,1}$ contains the fewest possible number of regions.  
We can define $\Gamma'_{s,1}$, and $\Gamma'_{s,2}$ as we did for $\Gamma$.
Since we chose $s$ where $\Gamma_{s,1}$ is minimal, every strand in region $\Gamma_{s,1}$ intersects $s$. 

Let $G$ be the graph we obtain when we perform $Y-\Delta$ moves on $\Gamma$ to minimize the number of regions on the side of strand $s$ not containing the interior boundary vertex. Define $G_1$ and $G_2$ as the regions partitioned by strand $s$ in $G$ where $G_1$ is the side corresponding to $\Gamma_{s,1}$. Define $G'$, $G'_1$, and  $G'_2$ similarly from $\Gamma'$.

We claim  that $G_1$ contains no medial strand intersections. Suppose for contradiction that $p$ and $q$ are strand segments that lie within $G_1$ and intersect at $v_0$ in $G_1$. Let the endpoint of $p$ on $s'$ be $v_1$. Let the endpoint of $q$ on $s'$ be $v_2$.
Let the triangle formed by $v
_0$, $v_1$, and $v_2$ be $T$. By motions on triangular faces in $T$, we may make $T$ empty without increasing the number of regions in $G_1$. Then, a motion on $T$ moves $T$ to $G_2$, decreasing the number of regions in $G_1$ by one, a contradiction. Similarly, $G'_1$ contains no medial strand intersections. 

Furthermore, $G$ strongly restricted to vertices in $G_2$ is an irreducible graph whose $z$-sequence may be computed from from the $z$-sequence of $G$ by deleting strand $s$ and relabeling the remaining strands. Since we may determine from the $z$-sequence of $G'$ which strand segments of $G'_1$ intersect $s$, $G$ weakly restricted to vertices $G_1$ and $G'$ weakly restricted to vertices in $G'_1$ are isomorphic  graphs. As above, $G'$ strongly restricted to $G'_2$ is a graph irreducible by $Y-\Delta$ moves, pendant removal, self-edge removal, parallel reductions, series reductions, and antenna jumping whose $z$-sequence may be computed from from the z-sequence of $\Gamma'$ by deleting strand $s$ and relabeling the remaining strands. Then $G_2$ and $G'_2$ have the same $z$-sequence. By the inductive hypothesis, $G_2$ and $G'_2$ are $Y-\Delta$ equivalent. This means $G$ and $G'$, and therefore $\Gamma$ and $\Gamma'$, are $Y-\Delta$ equivalent as well.
\end{proof}
\end{thm}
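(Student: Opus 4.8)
The plan is to prove both directions, with the forward implication being routine and the converse carrying the weight.

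\emph{Forward direction.} By the Remark identifying medial motions with $Y-\Delta$ moves, it suffices to check that a single motion preserves the $z$-sequence. A motion rearranges three strand segments inside one triangular face and does nothing else, so it changes neither the cyclic order in which the strands meet the boundary circle (the labels $t_i$), nor the orientation (clockwise versus counterclockwise about $b$) of any strand, nor whether any strand self-intersects. Since the $z$-sequence records precisely this data, it is a $Y-\Delta$ invariant; this is the content of the first sentence of the proof.

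\emph{Converse: setup.} I would induct on the number $n$ of medial strands, the case $n=1$ being trivial. For $n \geq 2$, by Theorem~\ref{FirstMedialTheorem} at most one strand self-intersects, so a non-self-intersecting strand $s$ exists; it separates the disk into the region $\Gamma_{s,1}$ not containing $b$ and the region $\Gamma_{s,2}$ containing $b$. Among all such choices I would take $s$ minimizing the number of faces of $\Gamma_{s,1}$. Because $\Gamma$ and $\Gamma'$ have the same $z$-sequence, the identity of $s$ and the set of strands that meet it inside $\Gamma_{s,1}$ are the same combinatorial data for both networks, giving corresponding regions $\Gamma'_{s,1},\Gamma'_{s,2}$.

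\emph{Converse: the emptying step.} The key is to show that after $Y-\Delta$ moves confined to $\Gamma_{s,1}$ we may assume $\Gamma_{s,1}$ has no strand intersections at all. A strand segment with both endpoints on $s$ inside $\Gamma_{s,1}$ would create a lens or loop avoiding $b$, contradicting Lemma~\ref{NoEmptyLenses}; minimality of $s$ then forces every strand entering $\Gamma_{s,1}$ to cross $s$. Two such segments that intersect form, with $s$, a triangle $T$ not containing $b$, which by Lemma~\ref{CanMakeTriangleEmpty} can be emptied and pushed across $s$, strictly lowering the face count of $\Gamma_{s,1}$ without ever raising it — contradicting minimality, so no intersections survive inside $\Gamma_{s,1}$. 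The same applies to $\Gamma'_{s,1}$. Now the cleared inside regions are determined up to isomorphism purely by which segments cross $s$ and in what order along $s$, which the shared $z$-sequence supplies; meanwhile the strong restriction to $\Gamma_{s,2}$ is again an rnpd irreducible by the six moves, with $z$-sequence obtained from that of $\Gamma$ by deleting strand $s$ and relabeling. Applying the inductive hypothesis gives a $Y-\Delta$ equivalence between $\Gamma_{s,2}$ and $\Gamma'_{s,2}$, and gluing back the isomorphic inside regions yields the equivalence $\Gamma \sim \Gamma'$.

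\emph{Main obstacle.} The delicate part is the bookkeeping around restrictions: confirming that the strong restriction to the $b$-side remains irreducible by all six moves (so that induction even applies), that its $z$-sequence is genuinely the ``deletion'' of $s$ together with a consistent relabeling — including the bar data tracking the self-intersecting strand — and, dually, that the emptied non-$b$-side is completely pinned down by the $z$-sequence so the two pieces can be reassembled. One also needs that the emptying moves do not disturb the already-matched structure outside $\Gamma_{s,1}$, and that a non-self-intersecting strand with nonempty $\Gamma_{s,1}$ exists at all, which follows for $n\geq 2$ from connectedness together with Theorem~\ref{FirstMedialTheorem}.
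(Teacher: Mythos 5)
Your strategy is the same as the paper's: induct on the number of strands, pick a non-self-intersecting strand $s$ whose $b$-free side is as small as possible, clear that side of intersections by pushing triangles across $s$ (Lemmas~\ref{NoEmptyLenses} and~\ref{CanMakeTriangleEmpty}), note that the cleared side is pinned down by which strands cross $s$, and apply induction to the restriction to the $b$-side with $s$ deleted from the $z$-sequence. The one step that does not hold together as written is the emptying step. You conclude ``no intersections survive inside $\Gamma_{s,1}$'' by ``contradicting minimality,'' but the only minimality you set up is over the choice of the strand $s$ in the \emph{fixed} graph $\Gamma$. Pushing a triangle across $s$ is a $Y$-$\Delta$ move, so it produces a different (equivalent) graph; a smaller face count there contradicts nothing about how $s$ was chosen in $\Gamma$, and indeed $\Gamma_{s,1}$ itself may perfectly well contain intersections --- the correct claim is only that some $Y$-$\Delta$ representative has an intersection-free $s$-side. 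The paper repairs exactly this by a second minimization: it takes $G$ in the $Y$-$\Delta$ class of $\Gamma$ minimizing the number of regions on the non-$b$ side of $s$, and then the triangle push contradicts the minimality of $G$, so it is $G_1$, not $\Gamma_{s,1}$, that is intersection-free (strand-choice minimality is used only to ensure every strand entering the $b$-free side meets $s$, so the triangles have their vertices on $s$). Equivalently, you could phrase your pushing argument as a terminating process, since each push strictly decreases the number of regions on that side; either one-line fix brings your argument in line with the paper's. Your closing paragraph correctly flags the remaining bookkeeping (that the shared $z$-sequence determines which strands cross $s$, and that the $b$-side restriction stays irreducible with the deleted-and-relabeled $z$-sequence, including the bar data); the paper asserts these points with little more detail than you do.
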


\section{Conditional Local Moves and Spider Graphs}

\subsection{Spider Graphs}

\begin{defn}
The \textit{standard graphs}, denoted by $\Sigma_n$ for $n\geq 2$, are certain critical cprns with $z$-sequence $1,2,...,n,1,2,...,n$.  See Section 7 of~\cite{curtis_ingerman_morrow} for a detailed construction.
\end{defn}

In \cite{curtis_mooers_morrow}, it was shown that each $\Sigma_{4m + 3}$ is recoverable. An algorithm that transforms any critical cprn into some $\Sigma_{4m + 3}$ was introduced in \cite{curtis_ingerman_morrow} and was central to the proof that critical cprns are recoverable.  We define a family of cprns that are electrically equivalent to standard graphs. These graphs will be called 4-periodic graphs.  To do this, we'll first need to define some special types of edges.
\begin{defn}
An edge is a \textit{boundary edge} if it connects two boundary vertices.  An edge  is a \textit{boundary spike} if it is connected to a boundary vertex of degree 1.
\end{defn}
We will denote the 4-periodic graphs as $\Pi_n$ for $n\geq 3$. The construction is as follows: For $n\geq 3$, we start with an $n$-gon whcich we will consider to be layer 1.  Then we extend our polygon to have exactly $\floor{\frac{n+1}{4}}$ layers, where each layer is a concentric $n$-gon connected to the $n$-gon from the previous layer by $n$ edges between respective vertices (see Fig. 1). For $n\equiv 3 \mod 4$ we will have $\frac{n+1}{4}$ layers.  For $n\equiv 0 \mod 4$ we will have $\frac{n}{4}$ layers and add $\frac{n}{2}$ consecutive boundary spikes.  For $n\equiv 1 \mod 4$, we will have $\frac{n-1}{4}$ layers and add all $n$ boundary spikes. For $n\equiv 2 \mod 4$, we will have $\frac{n-2}{4}$ layers, add all $n$ boundary spikes, and then connect $\frac{n}{2}$ consecutive boundary vertices. In each case, we let all outermost vertices be boundary vertices (see Figure~\ref{fig:4-periodic}).

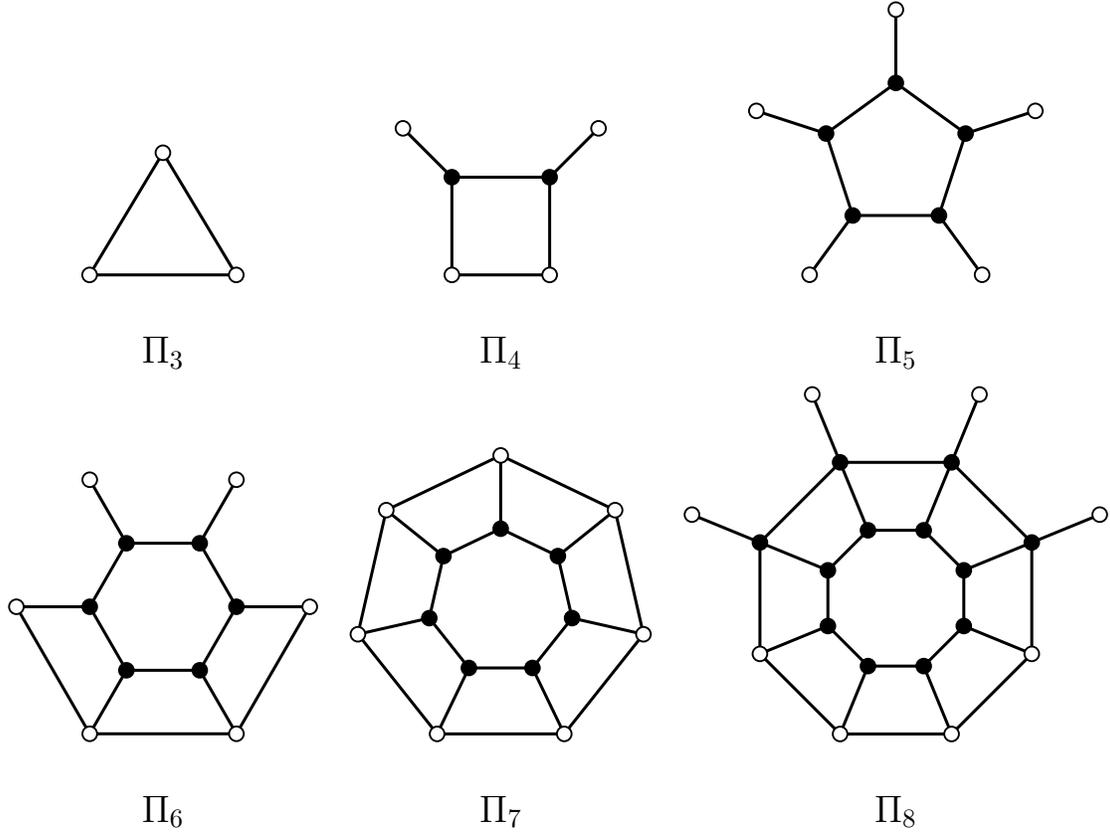
\begin{figure}[h]
\centering
\begin{tabular}{ccc}
    \begin{tikzpicture}[scale=0.65]
    \path[-, line width=0.4mm]
    (0,0) edge (1.5,2.5)
    (1.5,2.5) edge (3,0)
    (3,0) edge (0,0);
    \draw [line width=0.25mm, fill=white] (0,0) circle (1.5mm);
    \draw [line width=0.25mm, fill=white] (1.5,2.5) circle (1.5mm);
    \draw [line width=0.25mm, fill=white] (3,0) circle (1.5mm);
    \end{tikzpicture}
    &
    \begin{tikzpicture}[scale=0.65]
    \path[-, line width=0.4mm]
    (5,0) edge (7,0)
    (5,0) edge (5,2)
    (5,2) edge (7,2)
    (7,2) edge (7,0)
    (8,3) edge (7,2)
    (5,2) edge (4,3);
    \draw [line width=0.25mm, fill=white] (5,0) circle (1.5mm);
    \draw [line width=0.25mm, fill=white] (4,3) circle (1.5mm);
    \draw [line width=0.25mm, fill=black] (5,2) circle (1.5mm);
    \draw [line width=0.25mm, fill=white] (7,0) circle (1.5mm);
    \draw [line width=0.25mm, fill=black] (7,2) circle (1.5mm);
    \draw [line width=0.25mm, fill=white] (8,3) circle (1.5mm);
    \end{tikzpicture}
    &
    \begin{tikzpicture}[scale=0.65]
    \path[-, line width=0.4mm]
    (0.882,-1.213) edge (-0.882,-1.213)
    (-0.882,-1.213) edge (-1.427,0.464)
    (-1.427,0.464) edge (0,1.5)
    (0,1.5) edge (1.427,0.464)
    (1.427,0.464) edge (0.882,-1.213)
    (1.764,-2.426) edge (0.882,-1.213)
    (-1.764,-2.426) edge (-0.882,-1.213)
    (0,3) edge (0,1.5)
    (2.853,0.928) edge (1.427,0.464)
    (-2.853,0.928) edge (-1.427,0.464);
    \draw [line width=0.25mm, fill=white] (-2.853,0.928) circle (1.5mm);
    \draw [line width=0.25mm, fill=white] (2.853,0.928) circle (1.5mm);
    \draw [line width=0.25mm, fill=black] (0,1.5) circle (1.5mm);
    \draw [line width=0.25mm, fill=black] (0.882,-1.213) circle (1.5mm);
    \draw [line width=0.25mm, fill=black] (-0.882,-1.213) circle (1.5mm);
    \draw [line width=0.25mm, fill=black] (1.427,0.464) circle (1.5mm);
    \draw [line width=0.25mm, fill=black] (-1.427,0.464) circle (1.5mm);
    \draw [line width=0.25mm, fill=white] (0,3) circle (1.5mm);
    \draw [line width=0.25mm, fill=white] (1.764,-2.426) circle (1.5mm);
    \draw [line width=0.25mm, fill=white] (-1.764,-2.426) circle (1.5mm);
    \end{tikzpicture}
    \\
    {\Large$\Pi_3$} &
    {\Large$\Pi_4$} &
    {\Large$\Pi_5$} \\
    \begin{tikzpicture}[scale=0.65]
    \path[-, line width=0.4mm]
    (4,0) edge (5.5,0)
    (10,0) edge (8.5,0)
    (7.75,-1.3) edge (6.25,-1.3)
    (6.25,-1.3) edge (5.5,0)
    (5.5,0) edge (6.25,1.3)
    (6.25,1.3) edge (7.75,1.3)
    (7.75,1.3) edge (8.5,0)
    (8.5,0) edge (7.75,-1.3)
    (7.75,-1.3) edge (8.5,-2.6)
    (6.25,-1.3) edge (5.5,-2.6)
    (6.25,1.3) edge  (5.5,2.6)
    (7.75,1.3) edge  (8.5,2.6)
    (5.5,-2.6) edge  (8.5,-2.6)
    (5.5,-2.6) edge  (4,0)
    (8.5,-2.6) edge  (10,0);
    \draw [line width=0.25mm, fill=white] (10,0) circle (1.5mm);
    \draw [line width=0.25mm, fill=white] (4,0) circle (1.5mm);
    \draw [line width=0.25mm, fill=black] (7.75,1.3) circle (1.5mm);
    \draw [line width=0.25mm, fill=black] (7.75,-1.3) circle (1.5mm);
    \draw [line width=0.25mm, fill=black] (6.25,1.3) circle (1.5mm);
    \draw [line width=0.25mm, fill=black] (6.25,-1.3) circle (1.5mm);
    \draw [line width=0.25mm, fill=white] (8.5,2.6) circle (1.5mm);
    \draw [line width=0.25mm, fill=white] (8.5,-2.6) circle (1.5mm);
    \draw [line width=0.25mm, fill=white] (5.5,2.6) circle (1.5mm);
    \draw [line width=0.25mm, fill=white] (5.5,-2.6) circle (1.5mm);
    \draw [line width=0.25mm, fill=black] (5.5,0) circle (1.5mm);
    \draw [line width=0.25mm, fill=black] (8.5,0) circle (1.5mm);
    \end{tikzpicture}
    &
    \begin{tikzpicture}[scale=0.65]
    \path[-, line width=0.4mm]
    (0,1.5) edge (1.17, 0.94)
    (1.17, 0.94) edge (1.46,-0.33)
    (1.46,-0.33) edge (0.65,-1.35)
    (0.65,-1.35) edge (-0.65,-1.35)
    (-0.65,-1.35) edge (-1.46,-0.33)
    (-1.46,-0.33) edge (-1.17,0.94)
    (-1.17,0.94) edge (0,1.5)
    (0,3) edge (2.34, 1.88)
    (2.34, 1.88) edge (2.92,-0.66)
    (2.92,-0.66) edge (1.3,-2.7)
    (1.3,-2.7) edge (-1.3,-2.7)
    (-1.3,-2.7) edge (-2.92,-0.66)
    (-2.92,-0.66) edge (-2.34,1.88)
    (-2.34,1.88) edge (0,3)
    (0,1.5) edge (0,3)
    (1.17, 0.94) edge (2.34, 1.88)
    (1.46,-0.33) edge (2.92,-0.66)
    (0.65,-1.35) edge (1.3,-2.7)
    (-0.65,-1.35) edge (-1.3,-2.7)
    (-1.46,-0.33) edge (-2.92,-0.66)
    (-1.17,0.94) edge (-2.34,1.88);
    \draw [line width=0.25mm, fill=black] (0,1.5) circle (1.5mm);
    \draw [line width=0.25mm, fill=black] (1.17, 0.94) circle (1.5mm);
    \draw [line width=0.25mm, fill=black] (1.46,-0.33) circle (1.5mm);
    \draw [line width=0.25mm, fill=black] (0.65,-1.35) circle (1.5mm);
    \draw [line width=0.25mm, fill=black] (-0.65,-1.35) circle (1.5mm);
    \draw [line width=0.25mm, fill=black] (-1.46,-0.33) circle (1.5mm);
    \draw [line width=0.25mm, fill=black] (-1.17,0.94) circle (1.5mm);
    \draw [line width=0.25mm, fill=white] (0,3) circle (1.5mm);
    \draw [line width=0.25mm, fill=white] (2.34, 1.88) circle (1.5mm);
    \draw [line width=0.25mm, fill=white] (2.92,-0.66) circle (1.5mm);
    \draw [line width=0.25mm, fill=white] (1.3,-2.7) circle (1.5mm);
    \draw [line width=0.25mm, fill=white] (-1.3,-2.7) circle (1.5mm);
    \draw [line width=0.25mm, fill=white] (-2.92,-0.66) circle (1.5mm);
    \draw [line width=0.25mm, fill=white] (-2.34,1.88) circle (1.5mm);
    \end{tikzpicture}
    &
    \begin{tikzpicture}[scale=0.65]
    \path[-, line width=0.4mm]
    (0.57, 1.39) edge (1.39, 0.57)
    (1.39, 0.57) edge (1.39, -0.57)
    (1.39, -0.57) edge (0.57,-1.39)
    (0.57,-1.39) edge (-0.57,-1.39)
    (-0.57,-1.39) edge (-1.39, -0.57)
    (-1.39, -0.57) edge (-1.39, 0.57)
    (-1.39, 0.57) edge (-0.57, 1.39)
    (-0.57, 1.39) edge (0.57, 1.39)
    (1.14,2.78) edge (2.78, 1.14)
    (2.78, 1.14) edge (2.78, -1.14)
    (2.78, -1.14) edge (1.14,-2.78)
    (1.14,-2.78) edge (-1.14,-2.78)
    (-1.14,-2.78) edge (-2.78, -1.14)
    (-2.78, -1.14) edge (-2.78, 1.14)
    (-2.78, 1.14) edge (-1.14,2.78)
    (-1.14, 2.78) edge (1.14,2.78)
    (1.14,2.78) edge (0.57, 1.39)
    (2.78, 1.14) edge (1.39, 0.57)
    (2.78, -1.14) edge (1.39, -0.57)
    (1.14,-2.78) edge (0.57, -1.39)
    (-1.14,-2.78) edge (-0.57, -1.39)
    (-2.78, -1.14) edge (-1.39, -0.57)
    (-2.78, 1.14) edge (-1.39, 0.57)
    (-1.14, 2.78) edge (-0.57, 1.39)
    (1.71,4.17) edge (1.14,2.78)
    (4.17, 1.71) edge (2.78,1.14)
    (-4.17, 1.71) edge (-2.78,1.14)
    (-1.71,4.17) edge (-1.14,2.78);
    \draw [line width=0.25mm, fill=black] (0.57,1.39) circle (1.5mm);
    \draw [line width=0.25mm, fill=black] (1.39, 0.57) circle (1.5mm);
    \draw [line width=0.25mm, fill=black] (1.39, -0.57) circle (1.5mm);
    \draw [line width=0.25mm, fill=black] (0.57,-1.39) circle (1.5mm);
    \draw [line width=0.25mm, fill=black] (-0.57,-1.39) circle (1.5mm);
    \draw [line width=0.25mm, fill=black] (-1.39, -0.57) circle (1.5mm);
    \draw [line width=0.25mm, fill=black] (-1.39, 0.57) circle (1.5mm);
    \draw [line width=0.25mm, fill=black] (-0.57,1.39) circle (1.5mm);
    \draw [line width=0.25mm, fill=black] (1.14,2.78) circle (1.5mm);
    \draw [line width=0.25mm, fill=black] (2.78, 1.14) circle (1.5mm);
    \draw [line width=0.25mm, fill=white] (2.78, -1.14) circle (1.5mm);
    \draw [line width=0.25mm, fill=white] (1.14,-2.78) circle (1.5mm);
    \draw [line width=0.25mm, fill=white] (-1.14,-2.78) circle (1.5mm);
    \draw [line width=0.25mm, fill=white] (-2.78, -1.14) circle (1.5mm);
    \draw [line width=0.25mm, fill=black] (-2.78, 1.14) circle (1.5mm);
    \draw [line width=0.25mm, fill=black] (-1.14,2.78) circle (1.5mm);
    \draw [line width=0.25mm, fill=white] (1.71,4.17) circle (1.5mm);
    \draw [line width=0.25mm, fill=white] (4.17, 1.71) circle (1.5mm);
    \draw [line width=0.25mm, fill=white] (-4.17, 1.71) circle (1.5mm);
    \draw [line width=0.25mm, fill=white] (-1.71,4.17) circle (1.5mm);
    \end{tikzpicture}
    \\
    {\Large$\Pi_6$} &
    {\Large$\Pi_7$} &
    {\Large$\Pi_8$} \\
\end{tabular}
\caption{The first six 4-periodic cprns, $\Pi_n$ for $3\leq n\leq 8$.}
\label{fig:4-periodic}
\end{figure}

\begin{defn}
In $\Pi_n$ for $n$ even, we can see that as we go around the boundary of the disk, we have a set of boundary vertices with boundary spikes and then a set of boundary vertices connected by boundary edges.  Let the first and last of the boundary vertices connected by boundary edges be $v$ and $w$.  For $n\equiv 0\mod 4$, each of these vertices is of degree 2.  We will refer to the edges connected to $v$ and $w$ that are not boundary edges as \emph{boundary pseudo-spikes}.  For $n\equiv 2\mod 4$, each of these vertices is of degree 3.  The vertices have a boundary edge, an edge going inward, and an edge connected to an internal vertex that is connected to a boundary spike.  We will refer to the last of these edges as a \emph{pseudo-boundary edge}.
\end{defn}

\begin{figure}[h]
\centering
\begin{tabular}{ccc}
    \begin{tikzpicture}[scale=0.65]
    \path[-, line width=0.5mm]
    (5,2) edge (7,2);
    \path[-, line width=0.5mm, violet]
    (5,0) edge (7,0);
    \path[-, line width=0.5mm, magenta]
    (8,3) edge (7,2)
    (5,2) edge (4,3);
    \path[-, line width=0.5mm, cyan]
    (5,0) edge (5,2)
    (7,2) edge (7,0);
    \draw [line width=0.25mm, fill=white] (5,0) circle (1.5mm);
    \draw [line width=0.25mm, fill=white] (4,3) circle (1.5mm);
    \draw [line width=0.25mm, fill=black] (5,2) circle (1.5mm);
    \draw [line width=0.25mm, fill=white] (7,0) circle (1.5mm);
    \draw [line width=0.25mm, fill=black] (7,2) circle (1.5mm);
    \draw [line width=0.25mm, fill=white] (8,3) circle (1.5mm);
    \end{tikzpicture}
    &
    \begin{tikzpicture}[scale=0.65]
    \path[-, line width=0.5mm]
    (7.75,-1.3) edge (6.25,-1.3)
    (6.25,-1.3) edge (5.5,0)
    (5.5,0) edge (6.25,1.3)
    (6.25,1.3) edge (7.75,1.3)
    (7.75,1.3) edge (8.5,0)
    (8.5,0) edge (7.75,-1.3)
    (7.75,-1.3) edge (8.5,-2.6)
    (6.25,-1.3) edge (5.5,-2.6);
    \path[-, line width=0.5mm, violet]
    (5.5,-2.6) edge  (8.5,-2.6)
    (5.5,-2.6) edge  (4,0)
    (8.5,-2.6) edge  (10,0);
    \path[-, line width=0.5mm, magenta]
    (6.25,1.3) edge  (5.5,2.6)
    (7.75,1.3) edge  (8.5,2.6);
    \path[-, line width=0.5mm, orange]
    (4,0) edge (5.5,0)
    (10,0) edge (8.5,0);
    \draw [line width=0.25mm, fill=white] (10,0) circle (1.5mm);
    \draw [line width=0.25mm, fill=white] (4,0) circle (1.5mm);
    \draw [line width=0.25mm, fill=black] (7.75,1.3) circle (1.5mm);
    \draw [line width=0.25mm, fill=black] (7.75,-1.3) circle (1.5mm);
    \draw [line width=0.25mm, fill=black] (6.25,1.3) circle (1.5mm);
    \draw [line width=0.25mm, fill=black] (6.25,-1.3) circle (1.5mm);
    \draw [line width=0.25mm, fill=white] (8.5,2.6) circle (1.5mm);
    \draw [line width=0.25mm, fill=white] (8.5,-2.6) circle (1.5mm);
    \draw [line width=0.25mm, fill=white] (5.5,2.6) circle (1.5mm);
    \draw [line width=0.25mm, fill=white] (5.5,-2.6) circle (1.5mm);
    \draw [line width=0.25mm, fill=black] (5.5,0) circle (1.5mm);
    \draw [line width=0.25mm, fill=black] (8.5,0) circle (1.5mm);
    \end{tikzpicture}
    &
    \begin{tikzpicture}[scale=0.65]
    \path[-, line width=0.5mm]
    (0.57, 1.39) edge (1.39, 0.57)
    (1.39, 0.57) edge (1.39, -0.57)
    (1.39, -0.57) edge (0.57,-1.39)
    (0.57,-1.39) edge (-0.57,-1.39)
    (-0.57,-1.39) edge (-1.39, -0.57)
    (-1.39, -0.57) edge (-1.39, 0.57)
    (-1.39, 0.57) edge (-0.57, 1.39)
    (-0.57, 1.39) edge (0.57, 1.39)
    (1.14,2.78) edge (2.78, 1.14)
    (-2.78, 1.14) edge (-1.14,2.78)
    (-1.14, 2.78) edge (1.14,2.78)
    (1.14,2.78) edge (0.57, 1.39)
    (2.78, 1.14) edge (1.39, 0.57)
    (2.78, -1.14) edge (1.39, -0.57)
    (1.14,-2.78) edge (0.57, -1.39)
    (-1.14,-2.78) edge (-0.57, -1.39)
    (-2.78, -1.14) edge (-1.39, -0.57)
    (-2.78, 1.14) edge (-1.39, 0.57)
    (-1.14, 2.78) edge (-0.57, 1.39);
    \path[-, line width=0.5mm, cyan]
    (2.78, 1.14) edge (2.78, -1.14)
    (-2.78, -1.14) edge (-2.78, 1.14);
    \path[-, line width=0.5mm, magenta]
    (1.71,4.17) edge (1.14,2.78)
    (4.17, 1.71) edge (2.78,1.14)
    (-4.17, 1.71) edge (-2.78,1.14)
    (-1.71,4.17) edge (-1.14,2.78);
    \path[-, line width=0.5mm, violet]
    (2.78, -1.14) edge (1.14,-2.78)
    (1.14,-2.78) edge (-1.14,-2.78)
    (-1.14,-2.78) edge (-2.78, -1.14);
    \draw [line width=0.25mm, fill=black] (0.57,1.39) circle (1.5mm);
    \draw [line width=0.25mm, fill=black] (1.39, 0.57) circle (1.5mm);
    \draw [line width=0.25mm, fill=black] (1.39, -0.57) circle (1.5mm);
    \draw [line width=0.25mm, fill=black] (0.57,-1.39) circle (1.5mm);
    \draw [line width=0.25mm, fill=black] (-0.57,-1.39) circle (1.5mm);
    \draw [line width=0.25mm, fill=black] (-1.39, -0.57) circle (1.5mm);
    \draw [line width=0.25mm, fill=black] (-1.39, 0.57) circle (1.5mm);
    \draw [line width=0.25mm, fill=black] (-0.57,1.39) circle (1.5mm);
    \draw [line width=0.25mm, fill=black] (1.14,2.78) circle (1.5mm);
    \draw [line width=0.25mm, fill=black] (2.78, 1.14) circle (1.5mm);
    \draw [line width=0.25mm, fill=white] (2.78, -1.14) circle (1.5mm);
    \draw [line width=0.25mm, fill=white] (1.14,-2.78) circle (1.5mm);
    \draw [line width=0.25mm, fill=white] (-1.14,-2.78) circle (1.5mm);
    \draw [line width=0.25mm, fill=white] (-2.78, -1.14) circle (1.5mm);
    \draw [line width=0.25mm, fill=black] (-2.78, 1.14) circle (1.5mm);
    \draw [line width=0.25mm, fill=black] (-1.14,2.78) circle (1.5mm);
    \draw [line width=0.25mm, fill=white] (1.71,4.17) circle (1.5mm);
    \draw [line width=0.25mm, fill=white] (4.17, 1.71) circle (1.5mm);
    \draw [line width=0.25mm, fill=white] (-4.17, 1.71) circle (1.5mm);
    \draw [line width=0.25mm, fill=white] (-1.71,4.17) circle (1.5mm);
    \end{tikzpicture}
    \\
\end{tabular}
\caption{Types of edges in $\Pi_4, \Pi_6$, and $\Pi_8$.  Boundary edges appear in purple, boundary spikes in pink, pseudo-boundary edges in light blue, and boundary pseudo-spikes in orange.}
\label{fig:boundary-pseudo-spike-pseudo-boundary-spike}
\end{figure}
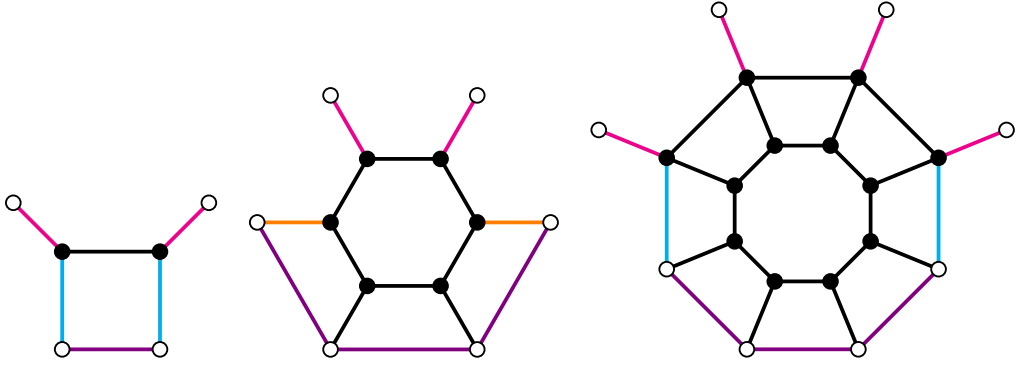

\begin{lemma}\label{4-periodic}
Let $\Pi_n$ be a 4-periodic graph and let $\ell$ be the number of layers in this graph. Let $s$ be a medial strand in $\Pi_n$ with endpoints $s_1$ and $s_2$ such that the center face of the graph is to the right of the strand when going from $s_1$ to $s_2$.  Then the number of strand endpoints between $s_1$ and $s_2$ to the left of the strand is:
\begin{enumerate}
    \item $4\ell-2$ if $n\equiv 3\mod 4$.
    \item $4\ell-1$ if $n\equiv 0\mod 4$.
    \item $4\ell$ if $n\equiv 1\mod 4$.
    \item $4\ell+1$ if $n\equiv 2\mod 4$.
\end{enumerate}
\end{lemma}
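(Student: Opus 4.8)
The plan is to compute, for each residue class of $n$ modulo $4$, the medial strands of $\Pi_n$ explicitly by tracking how a strand crosses the layers of the polygon, and then count the strand endpoints appearing on the left side. Recall that the medial graph of a cprn has $2N$ boundary points $t_1,\dots,t_{2N}$ where $N$ is the number of boundary vertices of $\Pi_n$, and each medial strand runs between two such points. The key observation is that $\Pi_n$ is built from $\ell = \floor{\tfrac{n+1}{4}}$ concentric $n$-gons joined by ``radial'' edges, plus a prescribed pattern of boundary spikes and boundary edges depending on $n \bmod 4$; a medial strand entering from a boundary point travels inward, crossing one radial edge per layer, turns around near the center (or near the central face), and travels back out, crossing the symmetric set of radial edges. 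So the symmetry of the construction about a diameter is what pins down the endpoints.

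First I would set up the medial graph of a single concentric $n$-gon annulus and describe the local picture: a strand crossing the $k$-th layer passes ``over'' a bounded number of edge-midpoints on each side. I would count precisely how many strand endpoints lie between $s_1$ and $s_2$ on the left after one inward pass and one outward pass through $\ell$ layers; this gives the ``bulk'' contribution of roughly $4\ell$ (two midpoints per layer on the way in, two on the way out, with the strand turning around contributing the constant correction). Then I would handle the four cases separately: for $n \equiv 3 \bmod 4$ there are no spikes or extra boundary edges and the strand's turnaround at the innermost layer accounts for the $-2$; for $n \equiv 0 \bmod 4$ the $\tfrac n2$ boundary spikes shift the count by $+1$ relative to the previous case; for $n \equiv 1 \bmod 4$ all $n$ boundary spikes contribute, giving $4\ell$; and for $n \equiv 2 \bmod 4$ the additional $\tfrac n2$ boundary edges among consecutive boundary vertices add one more, yielding $4\ell+1$. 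In each case I would either verify the formula by drawing the medial graph for the smallest instance ($\Pi_3,\Pi_4,\Pi_5,\Pi_6$ in Figure~\ref{fig:4-periodic}) and then argue by the self-similar structure of the layers that adding one layer increases the left-count by exactly $4$, matching the stated formulas.

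The main obstacle I expect is getting the constant term exactly right in each residue class — that is, carefully bookkeeping the strand endpoints contributed by the boundary spikes and boundary edges near where the strand $s$ has its endpoints $s_1, s_2$, and correctly accounting for the ``half-turn'' of the strand around the central face (which is why the answer depends on whether the innermost configuration is a bare $n$-gon, a spiked $n$-gon, or an $n$-gon with extra boundary edges). The inductive step in the number of layers $\ell$ is routine once the base case is pinned down, since each added layer is an identical annulus of $n$ radial edges and contributes a fixed number of crossings on each side of any through-strand. I would organize the write-up so that the base cases and the ``$+4$ per layer'' lemma do all the work, keeping the residue-by-residue analysis confined to the smallest graphs where it can be checked directly against the figure.
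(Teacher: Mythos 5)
Your proposal takes essentially the same route as the paper: establish the smallest instance in each residue class and then induct on the number of layers, showing that passing from $\Pi_n$ to $\Pi_{n+4}$ inserts exactly four new strand endpoints between $s_1$ and $s_2$ on the left (the paper implements this by first enlarging each layer's polygon by four sides on the far side of $s$, so $s$ is undisturbed, before adding the new layer). The only ingredient the paper makes explicit that your sketch leaves implicit is the claim, needed for $n$ even and proved there by a counting contradiction, that every strand has one endpoint crossing a boundary spike (or pseudo-spike) and the other crossing a (pseudo-)boundary edge, which is what makes the constant term uniform across all strands; your ``careful bookkeeping near the endpoints'' would have to reproduce this.
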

\begin{proof}
Suppose the lemma is true for case (3) for a graph with $\ell-1$ layers.  This means that given any strand $s$, the number of strand endpoints between $s_1$ and $s_2$ is $4\ell-4$ in such a graph.  Choose any strand $s$.  This strand partitions the graph into two pieces.  One of these pieces contains most of the center face.  Add extra vertices and edges to the graph so that the graph still has $\ell-1$ layers, but the underlying polygon now has 4 more sides.  We will assume without loss of generality that these vertices and edges are added in the piece of the graph that contains most of the center face.  This means it won't affect $s$ at all.  Now we add a layer to get to the next graph that falls under case (3).  We can observe that adding the layer forces us to have two extra sides of the polygon between $s_1$ and $s_2$, which corresponds to exactly 4 extra strand endpoints (see Figure~\ref{fig:add-layer}). Thus, the number of strand endpoints between $s_1$ and $s_2$ is $4\ell-4+4=4\ell$, as desired, and we have proven case (3) of the lemma.  The proof for case (1) is similar.
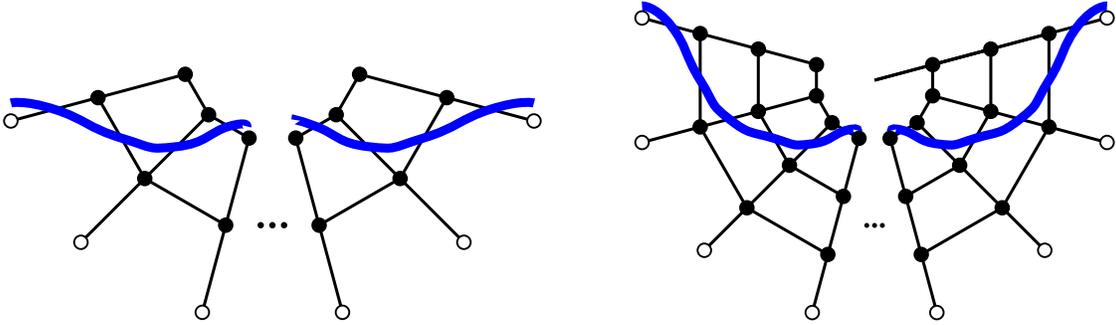
\begin{figure}[htp]
    \centering
    \begin{tikzpicture}[scale=0.6]
        \def\a{1.93185}
        \def\b{1.41421}
        \def\c{.517638}
        \draw[line width=0.4mm] (-2*\a, -2*\c) -- (-2*\b, -2* \b) -- (-2 * \c, -2 *\a);
        \draw[line width=0.4mm] (2*\a, -2*\c) -- (2*\b, -2* \b) -- (2 * \c, -2 *\a);
        \draw[line width=0.4mm] (-1*\a, -1*\c) -- (-1*\b, -1* \b) -- (-1 * \c, -1 *\a);
        \draw[line width=0.4mm] (1*\a, -1*\c) -- (1*\b, -1* \b) -- (1 * \c, -1 *\a);
        \draw[line width=0.4mm] (-3*\a, -3*\c) -- (-2*\a, -2*\c) -- (-1*\a, -1*\c);
        \draw[line width=0.4mm] (-3*\b, -3*\b) -- (-2*\b, -2* \b) -- (-1*\b, -1* \b);
        \draw[line width=0.4mm] (-3 *\c, -3 * \a) -- (-2 * \c, -2 *\a) -- (-1 *\c, -1 * \a);
        \draw[line width=0.4mm] (3*\a, -3*\c) -- (2*\a, -2*\c) -- (1*\a, -1*\c);
        \draw[line width=0.4mm] (3 * \b, -3*\b) -- (2*\b, -2* \b) -- (1*\b, -1* \b);
        \draw[line width=0.4mm] (3*\c, -3*\a) -- (2 * \c, -2 *\a) -- (1 *\c, -1 * \a);
        
        \draw [line width=1.2mm, blue] plot [smooth, tension=1] coordinates {(-3*\a, -3*\c + .4) (-2.5*\a, -2.5*\c) (-\a - \b, -\c - \b ) (-1.5 * \b, -1.5 * \b) (-.5 * \b - .5* \c , -0.5*\a - .5*\b ) (-.5 * \b - .5* \c + .4, -0.5*\a - .5*\b)};
        \draw [line width=1.2mm,blue] plot [smooth, tension=1] coordinates { (3*\a, -3*\c + .4) (2.5*\a, -2.5*\c) (\a + \b, -\c - \b ) (1.5 * \b, -1.5 * \b) (.5 * \b + .5* \c , -0.5*\a - .5*\b) (\c, -1*\a + .4)};
        \draw[line width = .25mm, fill=black](0, -2*\a) circle (.5mm);
        \draw[line width = .25mm, fill=black](.5*\c, -2*\a) circle (.5mm);
        \draw[line width = .25mm, fill=black](-.5*\c, -2*\a) circle (.5mm);
        \draw [line width=0.25mm, fill=black] (2*\c, -2*\a) circle (1.5mm);
        \draw [line width=0.25mm, fill=white] (3*\c, -3*\a) circle (1.5mm);
        \draw [line width=0.25mm, fill=black] (\c, -1*\a) circle (1.5mm);
        \draw [line width=0.25mm, fill=black] (2*\b, -2*\b) circle (1.5mm);
        \draw [line width=0.25mm, fill=white] (3*\b, -3*\b) circle (1.5mm);
        \draw [line width=0.25mm, fill=black] (\b, -1 *\b) circle (1.5mm);
        \draw [line width=0.25mm, fill=black] (2*\a, -2*\c) circle (1.5mm);
        \draw [line width=0.25mm, fill=white] (3*\a, -3*\c) circle (1.5mm);   
        \draw [line width=0.25mm, fill=black] (\a, -1*\c) circle (1.5mm);
        \draw [line width=0.25mm, fill=black] (-2*\a, -2*\c) circle (1.5mm);
        \draw [line width=0.25mm, fill=white] (-3*\a, -3*\c) circle (1.5mm);
        \draw [line width=0.25mm, fill=black] (-1*\a, -1*\c) circle (1.5mm);
        \draw [line width=0.25mm, fill=black] (-2*\b, -2*\b) circle (1.5mm);
        \draw [line width=0.25mm, fill=white] (-3*\b, -3*\b) circle (1.5mm);
        \draw [line width=0.25mm, fill=black] (-1*\b, -1*\b) circle (1.5mm);
        \draw [line width=0.25mm, fill=black] (-2*\c, -2*\a) circle (1.5mm);
        \draw [line width=0.25mm, fill=white] (-3*\c, -3*\a) circle (1.5mm);
        \draw [line width=0.25mm, fill=black] (-1 *\c, -1* \a) circle (1.5mm);
    \end{tikzpicture}
    \hspace{1cm}
     \begin{tikzpicture}[scale=0.4]
        \def\a{1.93185}
        \def\b{1.41421}
        \def\c{.517638}
        \draw[line width=0.4mm] (-3 * \a, 3*\c) -- (-3*\a, -3*\c) -- (-3*\b, -3* \b) -- (-3 * \c, -3 *\a);
        \draw[line width=0.4mm] (-2 * \a, 2*\c) -- (-2*\a, -2*\c) -- (-2*\b, -2* \b) -- (-2 * \c, -2 *\a);
        \draw[line width=0.4mm]  -- (2 * \a, 2*\c)-- (2*\a, -2*\c) -- (2*\b, -2* \b) -- (2 * \c, -2 *\a);
         \draw[line width=0.4mm]  -- (3 * \a, 3*\c)-- (3*\a, -3*\c) -- (3*\b, -3* \b) -- (3 * \c, -3 *\a);
        \draw [line width=0.4mm](-1 * \a, \c) -- (-1*\a, -1*\c) -- (-1*\b, -1* \b) -- (-1 * \c, -1 *\a);
        \draw [line width=0.4mm](\a, \c) -- (1*\a, -1*\c) -- (1*\b, -1* \b) -- (1 * \c, -1 *\a);
        \draw [line width=0.4mm](-4 * \a, 4 *\c) -- (-3*\a, 3*\c) -- (-1*\a, 1*\c);
        \draw [line width=0.4mm](4 *\a, 4 *\c) -- (3*\a, 3*\c) -- (1*\a, \c);
        \draw [line width=0.4mm](-4*\a, -4*\c) -- (-3*\a, -3*\c) -- (-1*\a, -1*\c);
        \draw [line width=0.4mm](-4*\b, -4 * \b) --  (-3*\b, -3* \b) -- (-1*\b, -1* \b);
        \draw[line width=0.4mm] (-4 * \c, -4 * \a) -- (-3 * \c, -3 *\a) -- (-1 *\c, -1 * \a);
        \draw [line width=0.4mm](4 * \a, -4 * \c) --   (3*\a, -3*\c) -- (1*\a, -1*\c);
        \draw[line width=0.4mm] (4 * \b, -4 * \b) -- (3*\b, -3* \b) -- (1*\b, -1* \b);
        \draw [line width=0.4mm](4 * \c, -4 * \a) --  (3 * \c, -3 *\a) -- (1 *\c, -1 * \a);
        \draw [line width=1.2mm, blue] plot [smooth, tension=1] coordinates {(-4*\a, 4*\c + .4)(-3.5*\a, 3.5*\c) (-3 * \a, 0) (-2.5 *\a, -2.5 * \c) (-\a - \b, -\c - \b ) (-1.5 * \b, -1.5 * \b) (-.5 * \b - .5* \c, -0.5*\a - .5*\b ) (-.5 * \b - .5* \c + .4, -0.5*\a - .5*\b) };
        \draw [line width=1.2mm,blue] plot [smooth, tension=1] coordinates {(4 *\a, 4 * \c + .4) (3.5 *\a, 3.5 * \c) (3 * \a, 0) (2.5 *\a, -2.5 * \c) (\a + \b , -\c - \b ) (1.5 * \b, -1.5 * \b) (.5 * \b + .5* \c , -0.5*\a - .5*\b) (.5 * \b + .5* \c - .4, -0.5*\a - .5*\b) };
        \draw[line width = .25mm, fill=black](0, -2.5*\a) circle (.5mm);
        \draw[line width = .25mm, fill=black](.5*\c, -2.5*\a) circle (.5mm);
        \draw[line width = .25mm, fill=black](-.5*\c, -2.5*\a) circle (.5mm);
        \draw[line width = .25mm, fill=black] (-2*\a, 2*\c) circle (2.25mm);
        \draw[line width = .25mm, fill=black] (-3*\a, 3*\c) circle (2.25mm);
        \draw[line width = .25mm, fill=white] (-4*\a, 4*\c) circle (2.25mm);
        \draw[line width = .25mm, fill=black] (-1*\a, 1*\c) circle (2.25mm);
        \draw[line width = .25mm, fill=black] (2*\a, 2*\c) circle (2.25mm);
        \draw[line width = .25mm, fill=black] (3*\a, 3*\c) circle (2.25mm);
        \draw[line width = .25mm, fill=white] (4*\a, 4*\c) circle (2.25mm);  
        \draw[line width = .25mm, fill=black] (1*\a, 1*\c) circle (2.25mm);
        \draw [line width=0.25mm, fill=black] (3*\c, -3*\a) circle (2.25mm);
        \draw [line width=0.25mm, fill=white] (4*\c, -4*\a) circle (2.25mm);
        \draw [line width=0.25mm, fill=black] (2*\c, -2*\a) circle (2.25mm);
        \draw [line width=0.25mm, fill=black] (3*\b, -3*\b) circle (2.25mm);
        \draw [line width=0.25mm, fill=white] (4*\b, -4*\b) circle (2.25mm);
        \draw [line width=0.25mm, fill=black] (2*\b, -2 *\b) circle (2.25mm);
        \draw [line width=0.25mm, fill=black] (1*\c, -1*\a) circle (2.25mm);
        \draw [line width=0.25mm, fill=black] (3*\a, -3*\c) circle (2.25mm);
        \draw [line width=0.25mm, fill=white] (4*\a, -4*\c) circle (2.25mm);
        \draw [line width=0.25mm, fill=black] (2*\a, -2*\c) circle (2.25mm);
        \draw [line width=0.25mm, fill=black] (1*\a, -1*\c) circle (2.25mm);
        \draw [line width=0.25mm, fill=black] (-1*\c, -1*\a) circle (2.25mm);
        \draw [line width=0.25mm, fill=black] (-3*\a, -3*\c) circle (2.25mm);
        \draw [line width=0.25mm, fill=white] (-4*\a, -4*\c) circle (2.25mm);
        \draw [line width=0.25mm, fill=black] (-2*\a, -2*\c) circle (2.25mm);
        \draw [line width=0.25mm, fill=black] (-1*\a, -1*\c) circle (2.25mm);
        \draw [line width=0.25mm, fill=black] (-3*\b, -3*\b) circle (2.25mm);
        \draw [line width=0.25mm, fill=white] (-4*\b, -4*\b) circle (2.25mm);
        \draw [line width=0.25mm, fill=black] (-2*\b, -2*\b) circle (2.25mm);
        \draw [line width=0.25mm, fill=black] (-1*\b, -1*\b) circle (2.25mm);
        \draw [line width=0.25mm, fill=black] (-3*\c, -3*\a) circle (2.25mm);
        \draw [line width=0.25mm, fill=white] (-4*\c, -4*\a) circle (2.25mm);
        \draw [line width=0.25mm, fill=black] (-2 *\c, -2* \a) circle (2.25mm);
        \draw [line width=0.25mm, fill=black] (1 *\b, -1* \b) circle (2.25mm);
    \end{tikzpicture}
    \caption{Adding a layer adds two extra boundary spikes between strand endpoints.}
    \label{fig:add-layer}
\end{figure}

Now suppose that $n\equiv 0\mod 4$. We claim that each strand must have one end crossing a boundary spike and the other end crossing an boundary edge or pseudo-boundary edge. To prove this, suppose for contradiction that the strand $s$ starts by crosses a boundary spike on both ends (the other case is similar). Then the strand looks exactly like a strand from case (3) and there must be $4\ell$ strand endpoints between $s_1$ and $s_2$ to the left. This means that there are at least $\frac{4\ell-2}{2}+2$ boundary spikes in the graph ($4\ell-2$ internal strands and 2 spikes that $S$ crosses). Since $n\equiv 0\mod 4$, we get the following relation:
$$\frac{4\ell-2}{2}+2=2\ell+1=2\floor*{\frac{n+1}{4}}+1=2\cdot\frac{n}{4}+1=\frac{n+2}{2}>\frac{n}{2}$$ \\
This is a contradiction, since the number of boundary spikes in such graphs is exactly $\frac{n}{2}$. Now we will prove that in any such graph, any strand that has one end crossing a boundary spike and one end crossing a boundary edge or pseudo-boundary edge has exactly $2\ell-1$ strand endpoints between its endpoints. Assume that the lemma holds for a graph with $\ell-1$ layers. Let $s$ be a strand in the graph.  We can increase the size of the polygons in each layer as we did for the proof of case (3) without affecting $s$.  Again, adding a layer adds exactly two sides of the polygon between $s_1$ and $s_2$, and consequently 4 more strand endpoints. The proof for case (4) is similar, where we use boundary edges instead of boundary spikes and boundary spikes and boundary pseudo-spikes instead of boundary edges and pseudo-boundary edges.\\
\end{proof}

\begin{cor}
For $n\geq 3$, $\Pi_n$ is electrically equivalent to $\Sigma_n$.
\end{cor}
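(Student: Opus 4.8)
The plan is to extract the $z$-sequence of $\Pi_n$ from Lemma~\ref{4-periodic} and then invoke the classification of critical cprns by their $z$-sequences. Writing $\ell=\floor{\frac{n+1}{4}}$ for the number of layers, the first step is to observe that the quantity in Lemma~\ref{4-periodic} collapses to $n-1$ in every congruence class: $4\ell-2=n-1$ when $n\equiv 3$, $4\ell-1=n-1$ when $n\equiv 0$, $4\ell=n-1$ when $n\equiv 1$, and $4\ell+1=n-1$ when $n\equiv 2 \pmod 4$. Hence every medial strand $s$ of $\Pi_n$, oriented with the center face on its right, has exactly $n-1$ of the remaining $2n-2$ strand endpoints on its left, and therefore exactly $n-1$ on its right as well. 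Labelling the endpoints $t_1,\dots,t_{2n}$ cyclically as in the definition of $M(\Gamma)$, this says precisely that each strand joins $t_i$ to $t_{i+n}$. Thus strand $k$ runs from $t_k$ to $t_{n+k}$ for $k=1,\dots,n$, and the $z$-sequence of $\Pi_n$ is $1,2,\dots,n,1,2,\dots,n$, the same as that of $\Sigma_n$.

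Next I would pass to a critical representative. Since $\Pi_n$ is a cprn it is electrically equivalent to some critical cprn $\Gamma'$, obtained by repeatedly applying the local moves listed earlier, because every non-critical cprn is reducible \cite{curtis_ingerman_morrow, deverdiere_gitler_vertigan}. Each such move fixes the cyclic pairing of medial strand endpoints on the boundary of the disk: $Y-\Delta$ moves are medial motions, while series, parallel, loop, and pendant reductions only delete a medial lens or a medial loop, none of which meets the endpoints $t_i$ (a medial circle, if one were present, would likewise be removed by such a reduction without disturbing the $t_i$). Therefore $\Gamma'$ again has $z$-sequence $1,2,\dots,n,1,2,\dots,n$.

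Finally, $\Sigma_n$ is by definition a critical cprn with $z$-sequence $1,2,\dots,n,1,2,\dots,n$, so by the result of \cite{curtis_ingerman_morrow} that critical cprns with the same $z$-sequence are electrically equivalent (indeed $Y-\Delta$ equivalent), $\Gamma'$ is electrically equivalent to $\Sigma_n$, and hence so is $\Pi_n$. The main point requiring care is the opening step: checking that the indexing of strand endpoints used in Lemma~\ref{4-periodic} lines up with the boundary order $v_1,\dots,v_n$ that defines the $z$-sequence, and confirming that no boundary vertex (in particular, no vertex carrying a boundary spike in the cases $n\not\equiv 3 \pmod 4$) is destroyed by the reductions; once these are settled, the corollary is essentially bookkeeping built on top of Lemma~\ref{4-periodic} and the cprn classification.
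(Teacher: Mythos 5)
Your proof is correct and follows essentially the same route as the paper: both arguments use Lemma~\ref{4-periodic} to show every strand separates the remaining $2n-2$ endpoints into two groups of $n-1$, conclude that the $z$-sequence is $1,2,\dots,n,1,2,\dots,n$, and then invoke the Curtis--Ingerman--Morrow classification of critical cprns by $z$-sequence. The only (harmless) difference is that the paper simply observes that $M(\Pi_n)$ is lenseless, so $\Pi_n$ is itself critical and the classification applies directly, whereas you pass to a critical reduction $\Gamma'$ and verify that the local moves preserve the endpoint pairing.
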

\begin{proof}
Let $n\equiv 1\mod 4$. We first note that the number of medial strands in $\Pi_n$ is $n$. Also, the number of endpoints is $2n$, as every boundary vertex corresponds to exactly two endpoints. Take any strand $s$ with endpoints $s_1,s_2$. By Lemma \ref{4-periodic}, we know that the number of strands between $s_1$ and $s_2$ is $4\ell$ on one side. Since $n\equiv 1\mod 4$, we know that $4\ell=4\floor{\frac{n+1}{4}}=4\cdot\frac{n-1}{4}=n-1$, so there are $n-1$ strand endpoints between $s_1$ and $s_2$. Since there are $2n-2$ endpoints other than $s_1$ and $s_2$, it follows that the number of endpoints between $s_1$ and $s_2$ on the other side is also $n-1$. Since $s$ was an arbitrary strand, the graph has the same $z$-sequence as the standard graph. Also note that the medial graph is lenseless.  This means $\Pi_n$ is electrically equivalent to $\Sigma_n$. The cases when $n\equiv 2\mod 4$, $n\equiv 3\mod 4$, and $n\equiv 0\mod 4$ are similar.
\end{proof}

\begin{defn}
A \textit{spider graph}, $\Xi_n$, is defined for each $n \geq 3$. $\Xi_n$ is constructed by placing a boundary vertex inside the center face of $\Pi_n$, and connecting it to each vertex on the boundary of that face.
\end{defn}

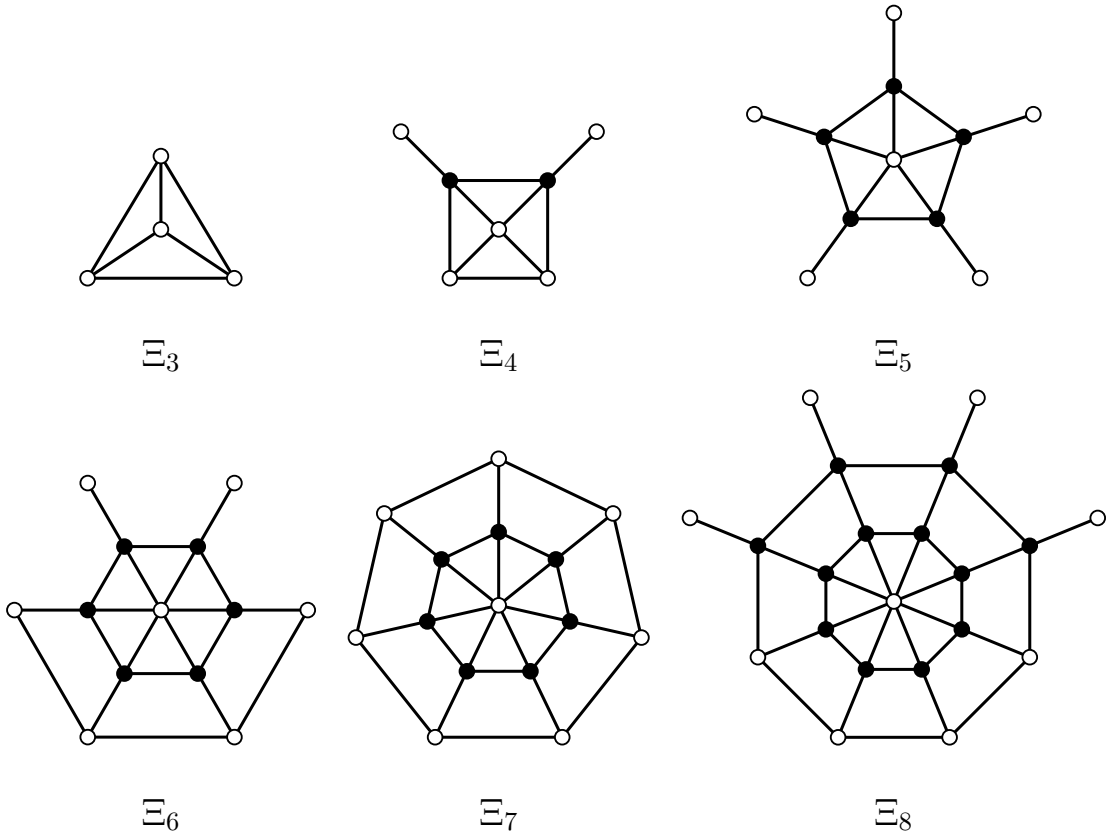
\begin{figure}[h]
    \centering
    \begin{tabular}{ccc}
    \begin{tikzpicture}[scale=0.65]
    \path[-, line width=0.4mm]
    (0,0) edge (1.5,1)
    (1.5,2.35) edge (1.5,1)
    (3,0) edge (1.5,1)
    (0,0) edge (1.5,2.5)
    (1.5,2.5) edge (3,0)
    (3,0) edge (0,0);
    \draw [line width=0.25mm, fill=white] (0,0) circle (1.5mm);
    \draw [line width=0.25mm, fill=white] (1.5,2.5) circle (1.5mm);
    \draw [line width=0.25mm, fill=white] (3,0) circle (1.5mm);
    \draw [line width=0.25mm, fill=white] (1.5,1) circle (1.5mm);
    \end{tikzpicture}
    &
    \begin{tikzpicture}[scale=0.65]
    \path[-, line width=0.4mm]
    (5,0) edge (6,1)
    (5,2) edge (6,1)
    (7,0) edge (6,1)
    (5,0) edge (7,0)
    (5,0) edge (5,2)
    (7,2) edge (6,1)
    (5,2) edge (7,2)
    (7,2) edge (7,0)
    (8,3) edge (7,2)
    (5,2) edge (4,3);
    \draw [line width=0.25mm, fill=white] (5,0) circle (1.5mm);
    \draw [line width=0.25mm, fill=white] (4,3) circle (1.5mm);
    \draw [line width=0.25mm, fill=black] (5,2) circle (1.5mm);
    \draw [line width=0.25mm, fill=white] (7,0) circle (1.5mm);
    \draw [line width=0.25mm, fill=black] (7,2) circle (1.5mm);
    \draw [line width=0.25mm, fill=white] (6,1) circle (1.5mm);
    \draw [line width=0.25mm, fill=white] (8,3) circle (1.5mm);
    \end{tikzpicture}
    &
    \begin{tikzpicture}[scale=0.65]
    \path[-, line width=0.4mm]
    (0,0) edge (0.882,-1.213)
    (0,0) edge (-0.882,-1.213)
    (0,0) edge (-1.427,0.464)
    (0,0) edge (0,1.5)
    (0,0) edge (1.427,0.464)
    (0.882,-1.213) edge (-0.882,-1.213)
    (-0.882,-1.213) edge (-1.427,0.464)
    (-1.427,0.464) edge (0,1.5)
    (0,1.5) edge (1.427,0.464)
    (1.427,0.464) edge (0.882,-1.213)
    (1.764,-2.426) edge (0.882,-1.213)
    (-1.764,-2.426) edge (-0.882,-1.213)
    (0,3) edge (0,1.5)
    (2.853,0.928) edge (1.427,0.464)
    (-2.853,0.928) edge (-1.427,0.464);
    \draw [line width=0.25mm, fill=white] (-2.853,0.928) circle (1.5mm);
    \draw [line width=0.25mm, fill=white] (2.853,0.928) circle (1.5mm);
    \draw [line width=0.25mm, fill=black] (0,1.5) circle (1.5mm);
    \draw [line width=0.25mm, fill=black] (0.882,-1.213) circle (1.5mm);
    \draw [line width=0.25mm, fill=black] (-0.882,-1.213) circle (1.5mm);
    \draw [line width=0.25mm, fill=black] (1.427,0.464) circle (1.5mm);
    \draw [line width=0.25mm, fill=black] (-1.427,0.464) circle (1.5mm);
    \draw [line width=0.25mm, fill=white] (0,0) circle (1.5mm);
    \draw [line width=0.25mm, fill=white] (0,3) circle (1.5mm);
    \draw [line width=0.25mm, fill=white] (1.764,-2.426) circle (1.5mm);
    \draw [line width=0.25mm, fill=white] (-1.764,-2.426) circle (1.5mm);
    \end{tikzpicture}
    \\
    {\Large$\Xi_3$} &
    {\Large$\Xi_4$} &
    {\Large$\Xi_5$} \\
    \begin{tikzpicture}[scale=0.65]
    \path[-, line width=0.4mm]
    (7,0) edge (7.75,-1.3)
    (7,0) edge (6.25,-1.3)
    (7,0) edge (5.5,0)
    (4,0) edge (5.5,0)
    (7,0) edge (6.25,1.3)
    (7,0) edge (7.75,1.3)
    (7,0) edge (8.5,0)
    (10,0) edge (8.5,0)
    (7.75,-1.3) edge (6.25,-1.3)
    (6.25,-1.3) edge (5.5,0)
    (5.5,0) edge (6.25,1.3)
    (6.25,1.3) edge (7.75,1.3)
    (7.75,1.3) edge (8.5,0)
    (8.5,0) edge (7.75,-1.3)
    (7.75,-1.3) edge (8.5,-2.6)
    (6.25,-1.3) edge (5.5,-2.6)
    (6.25,1.3) edge  (5.5,2.6)
    (7.75,1.3) edge  (8.5,2.6)
    (5.5,-2.6) edge  (8.5,-2.6)
    (5.5,-2.6) edge  (4,0)
    (8.5,-2.6) edge  (10,0);
    \draw [line width=0.25mm, fill=white] (10,0) circle (1.5mm);
    \draw [line width=0.25mm, fill=white] (4,0) circle (1.5mm);
    \draw [line width=0.25mm, fill=black] (7.75,1.3) circle (1.5mm);
    \draw [line width=0.25mm, fill=black] (7.75,-1.3) circle (1.5mm);
    \draw [line width=0.25mm, fill=black] (6.25,1.3) circle (1.5mm);
    \draw [line width=0.25mm, fill=black] (6.25,-1.3) circle (1.5mm);
    \draw [line width=0.25mm, fill=white] (8.5,2.6) circle (1.5mm);
    \draw [line width=0.25mm, fill=white] (8.5,-2.6) circle (1.5mm);
    \draw [line width=0.25mm, fill=white] (5.5,2.6) circle (1.5mm);
    \draw [line width=0.25mm, fill=white] (5.5,-2.6) circle (1.5mm);
    \draw [line width=0.25mm, fill=black] (5.5,0) circle (1.5mm);
    \draw [line width=0.25mm, fill=white] (7,0) circle (1.5mm);
    \draw [line width=0.25mm, fill=black] (8.5,0) circle (1.5mm);
    \end{tikzpicture}
    &
    \begin{tikzpicture}[scale=0.65]
    \path[-, line width=0.4mm]
    (0,1.5) edge (1.17, 0.94)
    (1.17, 0.94) edge (1.46,-0.33)
    (1.46,-0.33) edge (0.65,-1.35)
    (0.65,-1.35) edge (-0.65,-1.35)
    (-0.65,-1.35) edge (-1.46,-0.33)
    (-1.46,-0.33) edge (-1.17,0.94)
    (-1.17,0.94) edge (0,1.5)
    (0,3) edge (2.34, 1.88)
    (2.34, 1.88) edge (2.92,-0.66)
    (2.92,-0.66) edge (1.3,-2.7)
    (1.3,-2.7) edge (-1.3,-2.7)
    (-1.3,-2.7) edge (-2.92,-0.66)
    (-2.92,-0.66) edge (-2.34,1.88)
    (-2.34,1.88) edge (0,3)
    (0,1.5) edge (0,3)
    (1.17, 0.94) edge (2.34, 1.88)
    (1.46,-0.33) edge (2.92,-0.66)
    (0.65,-1.35) edge (1.3,-2.7)
    (-0.65,-1.35) edge (-1.3,-2.7)
    (-1.46,-0.33) edge (-2.92,-0.66)
    (-1.17,0.94) edge (-2.34,1.88)
    (0,1.5) edge (0,0)
    (1.17, 0.94) edge (0,0)
    (1.46,-0.33) edge (0,0)
    (0.65,-1.35) edge (0,0)
    (-0.65,-1.35) edge (0,0)
    (-1.46,-0.33) edge (0,0)
    (-1.17,0.94) edge (0,0);
    \draw [line width=0.25mm, fill=black] (0,1.5) circle (1.5mm);
    \draw [line width=0.25mm, fill=black] (1.17, 0.94) circle (1.5mm);
    \draw [line width=0.25mm, fill=black] (1.46,-0.33) circle (1.5mm);
    \draw [line width=0.25mm, fill=black] (0.65,-1.35) circle (1.5mm);
    \draw [line width=0.25mm, fill=black] (-0.65,-1.35) circle (1.5mm);
    \draw [line width=0.25mm, fill=black] (-1.46,-0.33) circle (1.5mm);
    \draw [line width=0.25mm, fill=black] (-1.17,0.94) circle (1.5mm);
    \draw [line width=0.25mm, fill=white] (0,3) circle (1.5mm);
    \draw [line width=0.25mm, fill=white] (2.34, 1.88) circle (1.5mm);
    \draw [line width=0.25mm, fill=white] (2.92,-0.66) circle (1.5mm);
    \draw [line width=0.25mm, fill=white] (1.3,-2.7) circle (1.5mm);
    \draw [line width=0.25mm, fill=white] (-1.3,-2.7) circle (1.5mm);
    \draw [line width=0.25mm, fill=white] (-2.92,-0.66) circle (1.5mm);
    \draw [line width=0.25mm, fill=white] (-2.34,1.88) circle (1.5mm);
    \draw [line width=0.25mm, fill=white] (0,0) circle (1.5mm);
    \end{tikzpicture}
    &
    \begin{tikzpicture}[scale=0.65]
    \path[-, line width=0.4mm]
    (0.57, 1.39) edge (1.39, 0.57)
    (1.39, 0.57) edge (1.39, -0.57)
    (1.39, -0.57) edge (0.57,-1.39)
    (0.57,-1.39) edge (-0.57,-1.39)
    (-0.57,-1.39) edge (-1.39, -0.57)
    (-1.39, -0.57) edge (-1.39, 0.57)
    (-1.39, 0.57) edge (-0.57, 1.39)
    (-0.57, 1.39) edge (0.57, 1.39)
    (1.14,2.78) edge (2.78, 1.14)
    (2.78, 1.14) edge (2.78, -1.14)
    (2.78, -1.14) edge (1.14,-2.78)
    (1.14,-2.78) edge (-1.14,-2.78)
    (-1.14,-2.78) edge (-2.78, -1.14)
    (-2.78, -1.14) edge (-2.78, 1.14)
    (-2.78, 1.14) edge (-1.14,2.78)
    (-1.14, 2.78) edge (1.14,2.78)
    (1.14,2.78) edge (0.57, 1.39)
    (2.78, 1.14) edge (1.39, 0.57)
    (2.78, -1.14) edge (1.39, -0.57)
    (1.14,-2.78) edge (0.57, -1.39)
    (-1.14,-2.78) edge (-0.57, -1.39)
    (-2.78, -1.14) edge (-1.39, -0.57)
    (-2.78, 1.14) edge (-1.39, 0.57)
    (-1.14, 2.78) edge (-0.57, 1.39)
    (1.71,4.17) edge (1.14,2.78)
    (4.17, 1.71) edge (2.78,1.14)
    (-4.17, 1.71) edge (-2.78,1.14)
    (-1.71,4.17) edge (-1.14,2.78)
    (0,0) edge (0.57, 1.39)
    (0,0) edge (1.39, 0.57)
    (0,0) edge (1.39, -0.57)
    (0,0) edge (0.57, -1.39)
    (0,0) edge (-0.57, -1.39)
    (0,0) edge (-1.39, -0.57)
    (0,0) edge (-1.39, 0.57)
    (0,0) edge (-0.57, 1.39);
    \draw [line width=0.25mm, fill=black] (0.57,1.39) circle (1.5mm);
    \draw [line width=0.25mm, fill=black] (1.39, 0.57) circle (1.5mm);
    \draw [line width=0.25mm, fill=black] (1.39, -0.57) circle (1.5mm);
    \draw [line width=0.25mm, fill=black] (0.57,-1.39) circle (1.5mm);
    \draw [line width=0.25mm, fill=black] (-0.57,-1.39) circle (1.5mm);
    \draw [line width=0.25mm, fill=black] (-1.39, -0.57) circle (1.5mm);
    \draw [line width=0.25mm, fill=black] (-1.39, 0.57) circle (1.5mm);
    \draw [line width=0.25mm, fill=black] (-0.57,1.39) circle (1.5mm);
    \draw [line width=0.25mm, fill=black] (1.14,2.78) circle (1.5mm);
    \draw [line width=0.25mm, fill=black] (2.78, 1.14) circle (1.5mm);
    \draw [line width=0.25mm, fill=white] (2.78, -1.14) circle (1.5mm);
    \draw [line width=0.25mm, fill=white] (1.14,-2.78) circle (1.5mm);
    \draw [line width=0.25mm, fill=white] (-1.14,-2.78) circle (1.5mm);
    \draw [line width=0.25mm, fill=white] (-2.78, -1.14) circle (1.5mm);
    \draw [line width=0.25mm, fill=black] (-2.78, 1.14) circle (1.5mm);
    \draw [line width=0.25mm, fill=black] (-1.14,2.78) circle (1.5mm);
    \draw [line width=0.25mm, fill=white] (1.71,4.17) circle (1.5mm);
    \draw [line width=0.25mm, fill=white] (4.17, 1.71) circle (1.5mm);
    \draw [line width=0.25mm, fill=white] (-4.17, 1.71) circle (1.5mm);
    \draw [line width=0.25mm, fill=white] (-1.71,4.17) circle (1.5mm);
    \draw [line width=0.25mm, fill=white] (0,0) circle (1.5mm);
    \end{tikzpicture}
    \\
    {\Large$\Xi_6$} &
    {\Large$\Xi_7$} &
    {\Large$\Xi_8$} \\
\end{tabular}
\caption{The first six spider graphs, $\Xi_n$ for $3\leq n\leq 8$.}
\label{fig:spider-graphs}
\end{figure}

\subsection{Conditional Local Moves}
In addition to the 7 local moves described in section 3, there are also \emph{conditional local moves} we can perform on rnpds.  These are similar to the local moves in that they do not change the response matrix, but the formulas are not subtraction free.  Thus, we can only perform the conditional local moves as long as they do not give us any negative conductances.\\

We have the following conditional local moves (see Appendix~\ref{app:moves-formulas} for formulas):
\begin{itemize}
    \item Triangle Conditional Local Move
    \begin{center}
    \begin{tikzpicture}
    \path[-, line width=0.4mm]
    (0,0) edge node[above]{$b$} (1.5,1)
    (1.5,2.35) edge node[right]{$c$} (1.5,1)
    (3,0) edge node[above]{$a$} (1.5,1)
    (0,0) edge node[left]{$e$} (1.5,2.5)
    (1.5,2.5) edge node[right]{$f$} (3,0)
    (3,0) edge node[below]{$d$} (0,0);
    \draw [line width=0.25mm, fill=gray!60] (0,0) circle (1.5mm);
    \draw [line width=0.25mm, fill=gray!60] (1.5,2.5) circle (1.5mm);
    \draw [line width=0.25mm, fill=gray!60] (3,0) circle (1.5mm);
    \draw [line width=0.25mm, fill=white] (1.5,1) circle (1.5mm);
    \node at (-0.5,1) {\phantom{1}};
    \node at (4.5,1) {\LARGE$\leftrightarrow$};
    \path[-, line width=0.4mm]
    (6.5,0) edge node[above]{$B$} (8,1)
    (8,2.35) edge node[right]{$C$} (8,1)
    (9.5,0) edge node[above]{$A$} (8,1)
    (6.5,0) edge node[left]{$E$} (8,2.5)
    (5,-1) edge node[above]{$F$} (6.5,0)
    (9.5,0) edge node[below]{$D$} (6.5,0);
    \draw [line width=0.25mm, fill=gray!60] (5,-1) circle (1.5mm);
    \draw [line width=0.25mm, fill=black] (6.5,0) circle (1.5mm);
    \draw [line width=0.25mm, fill=gray!60] (8,2.5) circle (1.5mm);
    \draw [line width=0.25mm, fill=gray!60] (9.5,0) circle (1.5mm);
    \draw [line width=0.25mm, fill=white] (8,1) circle (1.5mm);
    \end{tikzpicture}
    \end{center}
    \item Square Conditional Local Move
    \begin{center}
    \begin{tikzpicture}
    \path[-, line width=0.4mm]
    (0,0) edge node[above]{$b$} (1,1)
    (0,2) edge node[right]{$c$} (1,1)
    (2,0) edge node[left]{$a$} (1,1)
    (0,0) edge node[below]{$e$} (2,0)
    (0,0) edge node[left]{$f$} (0,2)
    (2,2) edge node[below]{$d$} (1,1)
    (0,2) edge node[above]{$g$} (2,2)
    (2,2) edge node[right]{$h$} (2,0)
    (-1,-1) edge node[below]{$i$} (0,0);
    \draw [line width=0.25mm, fill=black] (0,0) circle (1.5mm);
    \draw [line width=0.25mm, fill=gray!60] (0,2) circle (1.5mm);
    \draw [line width=0.25mm, fill=gray!60] (2,0) circle (1.5mm);
    \draw [line width=0.25mm, fill=gray!60] (2,2) circle (1.5mm);
    \draw [line width=0.25mm, fill=white] (1,1) circle (1.5mm);
    \draw [line width=0.25mm, fill=gray!60] (-1,-1) circle (1.5mm);
    \node at (-0.5,1) {\phantom{1}};
    \node at (3.5,1) {\LARGE$\leftrightarrow$};
    \path[-, line width=0.4mm]
    (5,0) edge node[above]{$B$} (6,1)
    (5,2) edge node[right]{$C$} (6,1)
    (7,0) edge node[left]{$A$} (6,1)
    (5,0) edge node[below]{$E$} (7,0)
    (5,0) edge node[left]{$F$} (5,2)
    (7,2) edge node[below]{$D$} (6,1)
    (5,2) edge node[above]{$G$} (7,2)
    (7,2) edge node[right]{$H$} (7,0)
    (8,3) edge node[below]{$I$} (7,2);
    \draw [line width=0.25mm, fill=gray!60] (5,0) circle (1.5mm);
    \draw [line width=0.25mm, fill=gray!60] (5,2) circle (1.5mm);
    \draw [line width=0.25mm, fill=gray!60] (7,0) circle (1.5mm);
    \draw [line width=0.25mm, fill=black] (7,2) circle (1.5mm);
    \draw [line width=0.25mm, fill=white] (6,1) circle (1.5mm);
    \draw [line width=0.25mm, fill=gray!60] (8,3) circle (1.5mm);
    \end{tikzpicture}
    \end{center}
    \item Pentagon Conditional Local Move
    \begin{center}
    \begin{tikzpicture}
    \path[-, line width=0.4mm]
    (0,0) edge node[right]{$a$} (0.882,-1.213)
    (0,0) edge node[right]{$b$} (-0.882,-1.213)
    (0,0) edge node[below]{$c$} (-1.427,0.464)
    (0,0) edge node[left]{$d$} (0,1.5)
    (0,0) edge node[above]{$e$} (1.427,0.464)
    (0.882,-1.213) edge node[below]{$f$} (-0.882,-1.213)
    (-0.882,-1.213) edge node[left]{$g$} (-1.427,0.464)
    (-1.427,0.464) edge node[above]{$h$} (0,1.5)
    (0,1.5) edge node[above]{$i$} (1.427,0.464)
    (1.427,0.464) edge node[right]{$j$} (0.882,-1.213)
    (1.764,-2.426) edge node[right]{$k$} (0.882,-1.213)
    (-1.764,-2.426) edge node[left]{$\ell$} (-0.882,-1.213)
    (0,3) edge node[right]{$m$} (0,1.5);
    \draw [line width=0.25mm, fill=black] (0,1.5) circle (1.5mm);
    \draw [line width=0.25mm, fill=black] (0.882,-1.213) circle (1.5mm);
    \draw [line width=0.25mm, fill=black] (-0.882,-1.213) circle (1.5mm);
    \draw [line width=0.25mm, fill=gray!60] (1.427,0.464) circle (1.5mm);
    \draw [line width=0.25mm, fill=gray!60] (-1.427,0.464) circle (1.5mm);
    \draw [line width=0.25mm, fill=white] (0,0) circle (1.5mm);
    \draw [line width=0.25mm, fill=gray!60] (0,3) circle (1.5mm);
    \draw [line width=0.25mm, fill=gray!60] (1.764,-2.426) circle (1.5mm);
    \draw [line width=0.25mm, fill=gray!60] (-1.764,-2.426) circle (1.5mm);
    \node at (3.5,0) {\LARGE$\leftrightarrow$};
    \path[-, line width=0.4mm]
    (7,0) edge node[right]{$A$} (7.882,-1.213)
    (7,0) edge node[right]{$B$} (6.118,-1.213)
    (7,0) edge node[below]{$C$} (5.573,0.464)
    (7,0) edge node[left]{$D$} (7,1.5)
    (7,0) edge node[above]{$E$} (8.427,0.464)
    (7.882,-1.213) edge node[below]{$F$} (6.118,-1.213)
    (6.118,-1.213) edge node[left]{$G$} (5.573,0.464)
    (5.573,0.464) edge node[above]{$H$} (7,1.5)
    (7,1.5) edge node[above]{$I$} (8.427,0.464)
    (8.427,0.464) edge node[right]{$J$} (7.882,-1.213)
    (8.764,-2.426) edge node[right]{$K$} (7.882,-1.213)
    (5.236,-2.426) edge node[left]{$L$} (6.118,-1.213)
    (8.764,-2.426) edge node[below]{$M$} (5.236,-2.426);
    \draw [line width=0.25mm, fill=gray!60] (7,1.5) circle (1.5mm);
    \draw [line width=0.25mm, fill=black] (7.882,-1.213) circle (1.5mm);
    \draw [line width=0.25mm, fill=black] (6.118,-1.213) circle (1.5mm);
    \draw [line width=0.25mm, fill=gray!60] (8.427,0.464) circle (1.5mm);
    \draw [line width=0.25mm, fill=gray!60] (5.573,0.464) circle (1.5mm);
    \draw [line width=0.25mm, fill=white] (7,0) circle (1.5mm);
    \draw [line width=0.25mm, fill=gray!60] (8.764,-2.426) circle (1.5mm);
    \draw [line width=0.25mm, fill=gray!60] (5.236,-2.426) circle (1.5mm);
    \end{tikzpicture}
    \end{center}
\end{itemize}

\newpage
\begin{conj}
We conjecture a hexagon conditional local move as well:
\begin{center}
    \begin{tikzpicture}
    \path[-, line width=0.4mm]
    (0,0) edge node[right]{$a$} (0.75,-1.3)
    (0,0) edge node[right]{$b$} (-0.75,-1.3)
    (0,0) edge node[below]{$c$} (-1.5,0)
    (0,0) edge node[left]{$d$} (-0.75,1.3)
    (0,0) edge node[left]{$e$} (0.75,1.3)
    (0,0) edge node[above]{$f$} (1.5,0)
    (0.75,-1.3) edge node[below]{$g$} (-0.75,-1.3)
    (-0.75,-1.3) edge node[left]{$h$} (-1.5,0)
    (-1.5,0) edge node[left]{$i$} (-0.75,1.3)
    (-0.75,1.3) edge node[above]{$j$} (0.75,1.3)
    (0.75,1.3) edge node[right]{$k$} (1.5,0)
    (1.5,0) edge node[right]{$\ell$} (0.75,-1.3)
    (0.75,-1.3) edge node[right]{$m$} (1.5,-2.6)
    (-0.75,-1.3) edge node[left]{$n$} (-1.5,-2.6)
    (-0.75,1.3) edge node[left]{$o$} (-1.5,2.6)
    (0.75,1.3) edge node[right]{$p$} (1.5,2.6)
    (-1.5,-2.6) edge node[below]{$q$} (1.5,-2.6);
    \draw [line width=0.25mm, fill=black] (0.75,1.3) circle (1.5mm);
    \draw [line width=0.25mm, fill=black] (0.75,-1.3) circle (1.5mm);
    \draw [line width=0.25mm, fill=black] (-0.75,1.3) circle (1.5mm);
    \draw [line width=0.25mm, fill=black] (-0.75,-1.3) circle (1.5mm);
    \draw [line width=0.25mm, fill=gray!60] (1.5,2.6) circle (1.5mm);
    \draw [line width=0.25mm, fill=gray!60] (1.5,-2.6) circle (1.5mm);
    \draw [line width=0.25mm, fill=gray!60] (-1.5,2.6) circle (1.5mm);
    \draw [line width=0.25mm, fill=gray!60] (-1.5,-2.6) circle (1.5mm);
    \draw [line width=0.25mm, fill=gray!60] (-1.5,0) circle (1.5mm);
    \draw [line width=0.25mm, fill=white] (0,0) circle (1.5mm);
    \draw [line width=0.25mm, fill=gray!60] (1.5,0) circle (1.5mm);
    \node at (3.5,0) {\LARGE$\leftrightarrow$};
    \path[-, line width=0.4mm]
    (7,0) edge node[right]{$A$} (7.75,-1.3)
    (7,0) edge node[right]{$B$} (6.25,-1.3)
    (7,0) edge node[below]{$C$} (5.5,0)
    (7,0) edge node[left]{$D$} (6.25,1.3)
    (7,0) edge node[left]{$E$} (7.75,1.3)
    (7,0) edge node[above]{$F$} (8.5,0)
    (7.75,-1.3) edge node[below]{$G$} (6.25,-1.3)
    (6.25,-1.3) edge node[left]{$H$} (5.5,0)
    (5.5,0) edge node[left]{$I$} (6.25,1.3)
    (6.25,1.3) edge node[above]{$J$} (7.75,1.3)
    (7.75,1.3) edge node[right]{$K$} (8.5,0)
    (8.5,0) edge node[right]{$L$} (7.75,-1.3)
    (7.75,-1.3) edge node[right]{$M$} (8.5,-2.6)
    (6.25,-1.3) edge node[left]{$N$} (5.5,-2.6)
    (6.25,1.3) edge node[left]{$O$} (5.5,2.6)
    (7.75,1.3) edge node[right]{$P$} (8.5,2.6)
    (5.5,2.6) edge node[above]{$Q$} (8.5,2.6);
    \draw [line width=0.25mm, fill=black] (7.75,1.3) circle (1.5mm);
    \draw [line width=0.25mm, fill=black] (7.75,-1.3) circle (1.5mm);
    \draw [line width=0.25mm, fill=black] (6.25,1.3) circle (1.5mm);
    \draw [line width=0.25mm, fill=black] (6.25,-1.3) circle (1.5mm);
    \draw [line width=0.25mm, fill=gray!60] (8.5,2.6) circle (1.5mm);
    \draw [line width=0.25mm, fill=gray!60] (8.5,-2.6) circle (1.5mm);
    \draw [line width=0.25mm, fill=gray!60] (5.5,2.6) circle (1.5mm);
    \draw [line width=0.25mm, fill=gray!60] (5.5,-2.6) circle (1.5mm);
    \draw [line width=0.25mm, fill=gray!60] (5.5,0) circle (1.5mm);
    \draw [line width=0.25mm, fill=white] (7,0) circle (1.5mm);
    \draw [line width=0.25mm, fill=gray!60] (8.5,0) circle (1.5mm);
    \end{tikzpicture}
    \end{center}

We have conjectural formulas for this move (see Appendix~\ref{app:moves-formulas}).  We were able to check that these formulas preserve every entry of the response matrix except for the entry in the row and column corresponding to the interior boundary vertex.  The computations were too large to check for this entry.  However, we did check that the response matrices were the same for several choices of values for the variables in one of the networks.
\end{conj}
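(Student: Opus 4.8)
The plan is to reduce the conjecture to an equality of response matrices on a fixed seven-vertex terminal set and then to observe that the single entry we could not reach symbolically is forced by the others, so that the oversized computation becomes unnecessary. Let $\Gamma$ and $\Gamma'$ denote the left and right fragments in the figure and let $T$ be their common terminal set: the interior boundary vertex $w$ together with the six grey vertices. As in the standard theory of local moves, it suffices to show that, under the substitution of Appendix~\ref{app:moves-formulas}, the response matrices $\Lambda(\Gamma)$ and $\Lambda(\Gamma')$ restricted to $T$ coincide, and that the substitution keeps all conductances nonnegative (a direct inspection of the explicit formulas, which also pins down the region in which the conditional move may be performed). Once the two fragments have the same response matrix on $T$, substituting one for the other inside any larger rnpd preserves that network's response matrix.

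For the computation we would exploit the fact that each fragment has only four internal vertices $x_1,x_2,x_3,x_4$ and only two internal-to-internal edges, $x_1x_2$ (conductance $g$, resp.\ $G$) and $x_3x_4$ (conductance $j$, resp.\ $J$); every other edge at an $x_i$ runs to $T$. Hence the internal block $C$ of the Kirchhoff matrix is block diagonal with two $2\times2$ blocks, so $C^{-1}$ is immediate and $BC^{-1}B^{T}$ splits as a sum of two terms of rank at most two. The Schur complement $\Lambda = A - BC^{-1}B^{T}$ is then an explicitly writable rational function of the conductances, and one computes $\Lambda(\Gamma)$ and $\Lambda(\Gamma')$ and compares them entry by entry after substitution. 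It is this block structure that makes the comparison tractable symbolically; it is essentially the computation already carried out for every entry but the one indexed by $(w,w)$.

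The step that dissolves the remaining obstacle is the elementary observation that any response matrix has zero row and column sums: a constant potential on $T$ extends to the constant potential on the whole fragment, which is harmonic and carries no current, so $\Lambda\mathbf 1 = 0$, while $\Lambda$ is symmetric. Hence $\Lambda$ is determined by its off-diagonal entries; in particular $\Lambda_{ww} = -\sum_{x}\Lambda_{wx}$, the sum running over the six grey terminals. Since $\Gamma$ and $\Gamma'$ agree in every entry of $\Lambda$ other than $\Lambda_{ww}$ --- precisely the entries already verified, and in any case ones that the block structure above lets one check symbolically --- they agree in $\Lambda_{ww}$ as well, and the large symbolic identity need never be confronted. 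The real content of the conjecture is therefore not any individual matrix entry but the verification that the substituted conductances stay nonnegative and the identification of the precise inequalities on $a,\dots,q$ under which this holds; that, rather than the $(w,w)$ entry, is where I would expect the main work to lie.
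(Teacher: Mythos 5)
There is no proof of this statement in the paper to compare against: the hexagon move is left as a conjecture, supported only by a symbolic check of all response-matrix entries except the one indexed by the interior boundary vertex, plus numerical spot checks. Measured against that, your proposal contains one genuinely valuable new idea and one overstatement. The valuable idea is the zero-sum observation: since the Kirchhoff matrix of each fragment has zero row sums, its Schur complement does too, so $\Lambda\mathbf{1}=0$ and $\Lambda$ is symmetric; hence $\Lambda_{ww}=-\sum_{x\neq w}\Lambda_{wx}$, and the unchecked diagonal entry is forced by the entries the authors did verify. (In fact the argument extends: by symmetry and column sums, the entire row and column of $w$ is determined by the principal submatrix avoiding $w$, so even if the whole $w$-row had been out of reach the checked entries would suffice.) Combined with the standard gluing principle you invoke --- agreement of the two fragments' response matrices on the seven terminals implies the substitution preserves the response matrix of any ambient rnpd --- this really would upgrade the authors' partial symbolic verification to a proof of the response-matrix identity, and your remark that the internal block $C$ is block diagonal with two $2\times2$ blocks (only the $g$/$G$ and $j$/$J$ edges join internal vertices) is accurate and is what makes the Schur complements writable in closed form.

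The gap is that your proposal does not actually perform the comparison it rests on: the agreement of all entries away from $(w,w)$ is exactly the ``too large'' computation, and you inherit it from the paper rather than carrying it out or simplifying it, so as written this is a completion strategy contingent on the authors' unpublished symbolic work (or on redoing it via the block structure you describe). Two smaller corrections: the equivalence must be checked with the substitution formulas in each direction, since the appendix lists only one direction and symmetry of the move is part of what is being claimed; and your closing claim that the ``real content'' is the nonnegativity analysis misreads the role of that condition --- the move is \emph{conditional} precisely because the formulas are not subtraction-free, so nonnegativity is the hypothesis under which the move may be applied, not a conclusion to be established for all conductances. The mathematical content of the conjecture is the rational identity between the two response matrices, and it is there that your zero-sum observation does real work.
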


\begin{conj}
Notice that each of the conditional local moves listed above is a move on a subgraph of $\Pi_n$.  We conjecture that there exists a conditional local move for each $n\geq 3$ where applying the local move on $\Pi_n$ does the following:
\begin{itemize}
    \item For $n\equiv 3\mod 4$, we change a boundary edge from the outermost layer into a boundary spike on the opposite side of the graph.
    \item For $n\equiv 0\mod 4$, we change a boundary spike from the outermost partial layer into a boundary spike on the opposite side of the graph.
    \item For $n\equiv 1\mod 4$, we change a boundary spike from the outermost partial layer into a boundary edge on the opposite side of the graph.
    \item For $n\equiv 2\mod 4$, we change a boundary edge from the outermost partial layer into a boundary edge on the opposite side of the graph.
\end{itemize}
It also seems that the formulas for these moves have a combinatorial interpretation in terms of groves or a grove-like structure (see~\cite{2011arXiv1104.4998L}).  For example the numerator in the formula for $G, H$, and $I$ in the pentagon conditional local move is a sum of groves.
\end{conj}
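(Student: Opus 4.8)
The plan is to construct the move explicitly, one residue class of $n$ modulo $4$ at a time, and then verify its two defining properties — preservation of the response matrix, and rationality (but not subtraction-freeness) of the conductance formulas — by separating a combinatorial step from an algebraic one.

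First I would fix the target network. For each $n$, let $\Pi_n'$ be the cprn obtained from $\Pi_n$ by deleting the prescribed outermost edge and re-attaching an edge of the prescribed type on the antipodal side, with the boundary/internal labels adjusted so that $\Pi_n'$ still has $n$ boundary vertices, and let $\Xi_n'$ be the spider graph built from $\Pi_n'$; the triangle, square, and pentagon conditional moves are the cases $n=3,4,5$, and the general pattern is obtained by adjoining the remaining concentric layers of $\Pi_n$. A layer-counting argument parallel to Lemma~\ref{4-periodic} should show that the prescription ``on the opposite side'' is exactly what keeps the medial strand-crossing counts balanced, so that $M(\Pi_n')$ is again lenseless with the standard $z$-sequence $1,2,\dots,n,1,2,\dots,n$; hence $\Pi_n'$ is a critical cprn, electrically equivalent to $\Sigma_n$ and therefore to $\Pi_n$, and by the classification of critical cprns in~\cite{curtis_ingerman_morrow} there is a sequence of $Y-\Delta$ moves (together with series and parallel transformations) from $\Pi_n$ to $\Pi_n'$.

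The heart of the proof is to lift this equivalence over the puncture: I would prove that $\Xi_n$ and $\Xi_n'$ have the same response matrix for matched conductances. The natural approach is to run the $Y-\Delta$ sequence for $\Pi_n\to\Pi_n'$ on $\Xi_n$, re-routing the edges incident to the interior boundary vertex whenever a move disturbs the central face; a degree-$\geq 3$ interior boundary vertex cannot be $Y-\Delta$-ed away and is too rigid for antenna jumping, so there is no subtraction-free re-routing, and the cumulative effect is a genuinely new operation whose formulas involve subtraction. Granting the recoverability of spider graphs (provable by the same peeling argument that handles $\Sigma_{4m+3}$), the response matrix of $\Xi_n$ determines the conductances on $\Xi_n'$, so the composite ``conductances on $\Xi_n\mapsto\Lambda\mapsto$ conductances on $\Xi_n'$'' is a well-defined birational map, and this map is the sought conditional local move; it preserves $\Lambda$ by construction, it cannot be a composition of the (subtraction-free) unconditional moves of Section~3, and it is therefore applicable only on the locus where every output conductance is nonnegative.

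The step I expect to be the real obstacle is writing the formulas in closed form and verifying them for all $n$ at once. Propagating the re-routing — equivalently, the recovery algorithm of~\cite{curtis_mooers_morrow} — through the $\floor{\frac{n+1}{4}}$ layers of $\Pi_n$ produces expressions whose size grows rapidly, and the single entry of $\Lambda$ indexed by the interior boundary vertex is exactly the quantity the authors report being unable to verify even in the hexagon case. The route I would ultimately pursue is to bypass the algorithm by identifying the numerators and denominators as generating functions over groves — or the cylindrical groves of~\cite{2011arXiv1104.4998L} — and proving invariance of the response matrix through a grove bijection between $\Xi_n$ and $\Xi_n'$; this would simultaneously give the closed form and explain the combinatorial structure the authors observed in the pentagon formulas, but constructing the right grove model on a punctured disk, and in particular the part of it that sees the interior-vertex entry, is itself the main difficulty.
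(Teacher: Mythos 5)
First, a point of calibration: the statement you are proving is left as a \emph{conjecture} in the paper --- the authors give no proof, only the explicit formulas for $n=3,4,5$ (verified by computation) and a partially checked guess for $n=6$. So there is no argument of theirs to compare against, and your text should be judged as a research plan. As such it is reasonable in outline, but it does not close the conjecture, and its central step is assumed rather than proved. The crux is your claim that $\Xi_n$ and the modified network $\Xi_n'$ admit matched conductances with the same response matrix. The $Y$-$\Delta$ equivalence of the underlying cprns $\Pi_n$ and $\Pi_n'$ (which your layer-counting sketch would plausibly give, mirroring Lemma~\ref{4-periodic} and its corollary) does \emph{not} transfer across the puncture: once the central star is inserted, the moves in the cprn sequence that touch the central face are obstructed, and ``re-routing the edges incident to the interior boundary vertex'' is not a defined operation --- proving that some well-defined, response-preserving operation exists there is exactly the content of the conjecture. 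Your fallback, defining the move as ``conductances on $\Xi_n \mapsto \Lambda \mapsto$ conductances on $\Xi_n'$'' via recoverability, has the same gap in a different guise: recoverability of $\Xi_n'$ (which the paper does establish, via star insertion into a critical cprn) gives \emph{uniqueness} of conductances for response matrices in the image of $\Xi_n'$, but says nothing about whether $\Lambda(\Xi_n)$ lies in that image. For cprns this realizability is controlled by the circular-minor characterization, which depends only on the electrical equivalence class; no analogue is known for rnpds, so the composite map may simply fail to be defined, and nothing in the proposal rules that out.

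Two further mismatches are worth flagging. The conjectured move is a \emph{local} move: the relevant comparison is between the response matrices of the local pieces with the gray attachment vertices treated as terminals (in the pentagon move, for instance, two pentagon corners are gray attachment points, whereas in $\Xi_5$ those corners are internal vertices), so equality of $\Lambda(\Xi_n)$ and $\Lambda(\Xi_n')$ with the spider graphs' own boundary structure is not the right statement and would not license applying the move inside an arbitrary ambient rnpd. And the part of the conjecture about closed-form, grove-interpretable formulas --- which is where the authors themselves got stuck, even for the single interior-vertex entry in the hexagon case --- is explicitly deferred in your last paragraph. In short, the proposal correctly identifies the natural strategy and honestly names the obstacles, but the existence of the move (coincidence of the relevant response-matrix images), its locality, and the formulas all remain unproved, so the conjecture stays open under your argument just as it does in the paper.
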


\section{Recoverability Conditions}
\subsection{Recoverable Rnpds from Critical Cprns}

\begin{lemma}[Lemma 11.3 from \cite{curtis_ingerman_morrow}] \label{alwaysedgespike}
Let $\Gamma$ be a critical cprn. Then $\Gamma$ has a boundary edge or a boundary spike.
\end{lemma}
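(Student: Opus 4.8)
The plan is to argue via the medial graph $M(\Gamma)$ and the fact, established in \cite{curtis_ingerman_morrow}, that the medial graph of a critical cprn is lensless: distinct medial strands cross at most once, no strand self-intersects, and there are no medial circles, medial loops, or medial lenses. I would first reduce the statement to a purely medial-graph claim: $\Gamma$ has a boundary edge or a boundary spike if and only if $M(\Gamma)$ has a \emph{triangular boundary face}, meaning a face of $M(\Gamma)$ bounded by a single arc of the boundary circle between two consecutive endpoints $t_i,t_{i+1}$ together with exactly two medial edges, both incident to one common medial vertex $m_e$. Once this equivalence is in hand, the problem becomes: show that a lensless medial graph always has a triangular boundary face.

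The dictionary in this reduction is bookkeeping with the definition of $M(\Gamma)$. If a boundary vertex $v_j$ has degree one, the boundary spike $e$ at $v_j$ forces both flanking endpoints $t_{2j-1}$ and $t_{2j}$ to attach to $m_e$, so the face of $M(\Gamma)$ along the arc through $v_j$ between $t_{2j-1}$ and $t_{2j}$ is triangular; if $e=(v_j,v_{j+1})$ is a boundary edge (drawn outermost), then $t_{2j}$ and $t_{2j+1}$ both attach to $m_e$ and the face along the arc between them is triangular. Conversely, if the face along the arc between consecutive $t_i,t_{i+1}$ is triangular with corner $m_e$, then checking which vertices of $\Gamma$ can be incident to $e$ shows $e$ is a boundary spike (when $i$ is odd, since the flanked vertex is then forced to have degree one) or a boundary edge (when $i$ is even). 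To produce a triangular boundary face in a lensless $M(\Gamma)$ (with $\Gamma$ connected and carrying at least one edge), I would run an innermost-region argument: follow the medial strand leaving $t_1$ to its first crossing, consider the region this initial segment cuts off on the side toward the boundary circle, and repeatedly shrink it — any other strand entering that region must either cross our boundary strand a second time (forbidden, since that makes a lens) or fail to return to the boundary circle (impossible, since both its ends are endpoints on the circle), so after finitely many replacements we reach a face of $M(\Gamma)$ with no strand segments in its interior; because every medial strand passes through at least one medial vertex, such a face has exactly one medial vertex on its non-arc boundary, i.e.\ it is triangular.

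I expect the innermost-region step to be the main obstacle: one must set up the shrinking so that it provably terminates and so that the terminal region is genuinely a triangle — ruling out that it collapses to a region with no medial vertex on its boundary (which cannot occur once $\Gamma$ has an edge) or that it still has a strand \emph{corner} on its non-arc boundary — and then dispatch the degenerate small cases (very few strands, or $n=1$) directly. By contrast, the dictionary step is routine, and the lenslessness of critical cprns is quoted verbatim from \cite{curtis_ingerman_morrow}.
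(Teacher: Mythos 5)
The paper does not actually prove this lemma --- it is imported verbatim as Lemma~11.3 of \cite{curtis_ingerman_morrow} --- so there is no in-paper argument to compare against. Your route (critical $\Rightarrow$ lensless medial graph, then a dictionary between boundary spikes/edges and triangular boundary faces of $M(\Gamma)$, then an ``empty boundary triangle'' existence argument) is the standard one, and the dictionary step is essentially right. One small caution there: you only need the direction ``triangular boundary face $\Rightarrow$ boundary spike or boundary edge''; the converse you assert is not literally true for the paper's definition of boundary edge (an edge between two boundary vertices that are not adjacent on the circle, or one not bounding the outer face, produces no triangular boundary face), but this does not affect the lemma.

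The genuine gap is in the innermost-region step, and it is not merely a matter of polish: the dichotomy you rely on is false. A strand entering your region need not cross the same bounding strand twice, nor fail to return to the circle --- it can enter by crossing $\sigma$ once and leave by crossing the other bounding strand $\tau$ once, or enter through $\sigma$ and terminate at some $t_j$ lying on the bounding arc. Neither creates a lens. So your stated reason neither shows the region is already empty nor explains how the ``replacement'' handles these intruding strands or why it terminates. Relatedly, the final inference ``a face with no strand segments in its interior has exactly one medial vertex on its non-arc boundary'' is a non sequitur: \emph{every} face of $M(\Gamma)$ has empty interior, and boundary faces with many medial corners occur (the face at a boundary vertex of degree $r$ has $r$ medial vertices on its boundary, and consecutive boundary edges of a face always switch strands at each corner, so such a face is bounded by several strands). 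What you must actually produce is a region bounded by an arc with no $t_j$ in its interior together with exactly two strand-rays meeting at a single crossing. A workable fix is to minimize, over all crossings $x$ of two strands and all four sectors at $x$, first the number of strand endpoints in the open bounding arc and then the number of crossings in the closed sector; one then checks that any intruding strand in a minimal sector would yield a smaller sector (using lenslessness to rule out double crossings with a single bounding strand), so the minimal sector is the desired triangle. Until the shrinking step is set up with such a decreasing measure and the intrusion cases above are dispatched, the proof does not go through.
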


\begin{lemma}[Lemma 11.1 from \cite{curtis_ingerman_morrow}]
\label{stillcriticaledge}
Let $\Gamma$ be a critical cprn. Then $\Gamma$ remains critical after deleting a boundary edge.
\end{lemma}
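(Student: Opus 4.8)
The plan is to argue directly from the definition of criticality in terms of connections; the only real work is to re-insert the deleted boundary edge into systems of disjoint paths. Write $e = (v_i, v_{i+1})$ for the boundary edge and $\Gamma' = \Gamma \setminus e$. I must show that for every edge $f$ of $\Gamma'$, the graph $\Gamma' \setminus f$ fails some connection that $\Gamma'$ possesses. So fix an edge $f$ of $\Gamma'$. Since $\Gamma$ is critical, deleting $f$ breaks some connection $(P,Q)$ of $\Gamma$: there is a system of $k$ pairwise-disjoint paths in $\Gamma$ realizing $(P,Q)$ and meeting $B$ only at their endpoints, but no such system exists in $\Gamma \setminus f$. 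Fix one such system $P_1, \dots, P_k$ in $\Gamma$.

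If none of $P_1, \dots, P_k$ uses the edge $e$, then all of them lie in $\Gamma'$, so $(P,Q)$ is a connection of $\Gamma'$; and since $\Gamma' \setminus f \subseteq \Gamma \setminus f$ and $(P,Q)$ is not a connection of $\Gamma \setminus f$, it is not one of $\Gamma' \setminus f$ either. So this $f$ is handled.

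Otherwise some $P_j$ uses $e$. The key point is that a path in a connection may meet a boundary vertex only at its endpoints, while $e$ joins the two boundary vertices $v_i, v_{i+1}$; hence a path containing $e$ can only be $e$ itself, so $P_j = e$ and $\{p_j, q_j\} = \{v_i, v_{i+1}\}$. Also, the system $P_1,\dots,P_k$ must use $f$ (otherwise it would survive in $\Gamma \setminus f$), and $f \ne e = P_j$, so $k \ge 2$ and some $P_i$ with $i \ne j$ contains $f$. Deleting $P_j$ from the system leaves $k-1$ disjoint paths realizing a connection $(P',Q')$ of $\Gamma$, where $P' = P \setminus \{p_j\}$ and $Q' = Q \setminus \{q_j\}$; these paths avoid $e$, so $(P',Q')$ is in fact a connection of $\Gamma'$. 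I claim $\Gamma \setminus f$ does not have $(P',Q')$ as a connection: if it did, take a realizing system for $(P',Q')$ in $\Gamma \setminus f$ and adjoin the edge $e$. Then $e$ lies in $\Gamma \setminus f$ since $e \ne f$; it is disjoint from those paths because their endpoints avoid $v_i, v_{i+1}$ and, being connection-paths, they do not pass through the boundary vertices $v_i, v_{i+1}$; and adjoining it reconstitutes exactly the terminal sets $P, Q$, which lie on the circle in the standard cyclic order, so by the remark following the definition of connection this enlarged system realizes $(P,Q)$ in $\Gamma \setminus f$, contradicting that $(P,Q)$ is broken by $f$. Hence $\Gamma \setminus f$, and therefore its subgraph $\Gamma' \setminus f$, fails the connection $(P',Q')$, which $\Gamma'$ has. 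As $f$ was arbitrary, $\Gamma'$ is critical.

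The step needing the most care is the last one: checking that removing the matched pair $\{p_j, q_j\} = \{v_i, v_{i+1}\}$ from $(P,Q)$ and then restoring it joined by $e$ really does pass between valid cprn connections. This uses only that $v_i, v_{i+1}$ are boundary vertices (so the connection-paths avoid them, giving disjointness), that deleting or reinserting a single $(p,q)$ pair from a clockwise-ordered sequence of terminals leaves a clockwise-ordered sequence, and the fact recorded in the remark after the definition of connection that pairwise-disjoint paths between correctly ordered terminal sets in a cprn are automatically matched by the identity permutation. One degenerate case is worth noting: if the system through $e$ had $k = 1$, so $(P,Q) = (\{v_i\},\{v_{i+1}\})$ were realized by $e$ alone, then $e$ itself would still connect $v_i$ to $v_{i+1}$ in $\Gamma \setminus f$, so $f$ would not break $(P,Q)$; thus this never arises, consistent with the $k \ge 2$ deduction above. (If deleting $e$ happens to disconnect $\Gamma$, the argument is unchanged, since criticality as defined here makes no reference to connectedness. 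An alternative route is via medial graphs: deleting a boundary edge removes a single crossing of the strand arrangement near the boundary and so cannot create a lens, hence preserves lens-freeness, equivalently criticality; but the argument above is self-contained from the definitions of this section.)
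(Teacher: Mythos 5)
Your proof is correct. Note that the paper itself does not prove this statement at all: it is imported verbatim as Lemma 11.1 of Curtis--Ingerman--Morrow and used as a black box, so there is no in-paper argument to compare against. What you have supplied is a self-contained derivation directly from this paper's definition of criticality via connections, and the path surgery is sound: the observation that a connection path through a boundary edge must equal that edge (since its two endpoints are boundary vertices and connection paths may not pass through boundary vertices) is exactly what makes both directions of the surgery work --- deleting the pair $\{v_i,v_{i+1}\}$ from $(P,Q)$ leaves a correctly ordered connection realized in $\Gamma'$, and re-adjoining $e$ to any hypothetical realization of $(P',Q')$ in $\Gamma\setminus f$ produces vertex-disjoint paths because the remaining terminals and path interiors avoid $v_i,v_{i+1}$. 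You also correctly dispose of the degenerate $k=1$ case and of the irrelevance of which permutation matches the terminals. The one stylistic remark: your case split is on whether the \emph{chosen} realizing system uses $e$, which is fine since each branch independently produces a connection of $\Gamma'$ broken by $f$; you might make explicit that nothing in the argument requires the two cases to be mutually exclusive across all realizing systems. Your parenthetical alternative via lenslessness of the medial graph is closer in spirit to how such facts are usually established in the cited source, but the combinatorial argument you give is complete on its own and arguably better suited to this paper, which defines criticality through connections rather than through the medial graph.
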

\begin{lemma}[Lemma 11.2 from \cite{curtis_ingerman_morrow}]
\label{stillcriticalspike}
Let $\Gamma$ be a critical cprn. Then $\Gamma$ remains critical after contracting a boundary spike.
\end{lemma}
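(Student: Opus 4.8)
The plan is to use the medial-graph characterization of critical cprns from \cite{curtis_ingerman_morrow}: a cprn $\Delta$ is critical if and only if $M(\Delta)$ is \emph{lensless}, i.e.\ no medial strand crosses itself, any two distinct medial strands cross at most once, and $M(\Delta)$ contains no medial circle. (This is the cprn analogue of Theorem~\ref{FirstMedialTheorem} with the interior boundary vertex deleted from the picture, and it is the tool used throughout \cite{curtis_ingerman_morrow}.) Write $e=(v,w)$ for the boundary spike, with $v=v_j$ the degree-one boundary vertex and $w$ internal, and let $\Gamma'=\Gamma/e$, so that $w$ becomes the boundary vertex occupying the position of $v_j$ on the circle. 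The point is that contracting $e$ changes $M(\Gamma)$ only by deleting the degree-four vertex $m_e$ and smoothing the two medial strands passing through it, and such a move cannot destroy lenslessness.

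\textbf{Key steps, in order.} First I would pin down the local structure at $m_e$: since $v$ has degree one, $e$ lies on the boundary of a single face $F$ of $\Gamma$ (appearing twice in its facial walk), and $m_e$ is adjacent in $M(\Gamma)$ precisely to the two strand-endpoints $t_{2j-1},t_{2j}$ flanking $v_j$ on the circle and to the medial vertices of the two edges of $w$ facially adjacent to $e$. Hence the two medial strands through $m_e$ — say $s$ through $t_{2j-1}$ and $s'$ through $t_{2j}$ — cross transversally at $m_e$. Criticality of $\Gamma$ gives $M(\Gamma)$ no self-crossing, so $s\neq s'$, and $s,s'$ cross at most once, hence exactly once (at $m_e$). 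Second, I would verify that $M(\Gamma')$ is obtained from $M(\Gamma)$ by deleting $m_e$ and attaching $t_{2j-1},t_{2j}$ directly to the appropriate medial vertices inside $F$, i.e.\ by replacing the crossing at $m_e$ with an uncrossing of $s$ and $s'$ and leaving everything else alone. Granting this, in $M(\Gamma')$ each pair of distinct strands crosses no more often than in $M(\Gamma)$ (only $\{s,s'\}$ loses its unique crossing), no self-crossing is introduced, and no new medial circle appears (since $s$ and $s'$ remain strands with two circle-endpoints each). Thus $M(\Gamma')$ is lensless, and by the characterization $\Gamma'$ is critical.

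\textbf{Main obstacle.} The hard part is the second step: showing that $M(\Gamma/e)$ is exactly $M(\Gamma)$ with $m_e$ smoothed away. This requires tracking what happens when the interior endpoint $w$ of the spike is pulled out to the boundary circle — that the face $F$ bounding the spike becomes the face incident to the circle on both sides of the new boundary vertex, that the edge set of $\Gamma'$ is $E(\Gamma)\setminus\{e\}$ with facial walks inherited from $\Gamma$ after deleting the two occurrences of $e$, and that the medial adjacency rules then reproduce the claimed picture. One must also handle the degenerate configurations: $\deg w = 2$, so the two facial neighbours of $e$ at $w$ coincide and $m_e$ meets a doubled medial edge, and the case where $\Gamma$ has very few edges. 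An alternative that sidesteps this bookkeeping is planar duality for cprns: a boundary spike of $\Gamma$ is dual to a boundary edge of the dual network, contracting the spike is dual to deleting that boundary edge, and since criticality depends only on the shared medial graph, the statement reduces to Lemma~\ref{stillcriticaledge}; but this requires first setting up cprn duality and its compatibility with medial graphs, which is itself some work.
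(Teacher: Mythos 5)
This lemma is stated in the paper only as a citation (Lemma 11.2 of \cite{curtis_ingerman_morrow}); the paper supplies no proof of its own, so there is nothing internal to compare your argument against. Judged on its own terms, your proof is correct and is essentially the argument one finds in \cite{curtis_ingerman_morrow}: criticality of a cprn is equivalent to lenslessness of its medial graph, and contracting a boundary spike replaces the single transversal crossing at $m_e$ of the two strands through the spike by an uncrossing near the boundary, which cannot create a lens, a self-intersection, or a medial circle. Two small refinements. First, the degenerate configuration $\deg w=2$ that you flag as needing care cannot actually occur: a critical network has no interior vertex of degree two, since its two incident edges would admit a series reduction, so the two facial neighbours of $e$ at $w$ are genuinely distinct. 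Second, the phrase ``only $\{s,s'\}$ loses its unique crossing'' is slightly misleading: the uncrossing does not keep $s$ and $s'$ as strands, it recombines them into two new strands, each consisting of one new boundary medial edge together with a terminal segment of one of $s$ or $s'$ (so, for instance, the new strand at $t_{2j-1}$ ends where $s'$ used to end). The lensless property still survives, because every intersection involving a new strand is inherited from the old strand whose segment it contains, so all pairwise crossing counts and self-crossing counts can only decrease; but the verification must be phrased over the new strand set, not the old one. With that observation, the remaining bookkeeping you identify as the main obstacle (that $M(\Gamma/e)$ is exactly $M(\Gamma)$ with $m_e$ smoothed, the two $t$-points reattaching to the extreme edges of $w$ on their respective sides) is routine and closes the argument.
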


\begin{lemma} \label{weknowconductance}
Let $\Gamma$ be an rnpd with interior boundary vertex $b$, and let $e$ be a boundary edge or a boundary spike. If deleting or contracting $e = pq$ breaks some connection $(P, Q)$ s.t. $b \notin P \cup Q$, then we can derive the conductance of $e$ from $\Lambda(\Gamma)$.
\end{lemma}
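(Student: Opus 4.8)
The plan is to recover $c_e$ as a signed ratio of two minors of the response matrix, following the strategy of Section~11 of~\cite{curtis_ingerman_morrow}. Write $\Lambda(R;S)$ for the submatrix of $\Lambda(\Gamma)$ on rows $R$ and columns $S$. First I would analyze the broken connection. If no path of the system of $k$ vertex-disjoint paths witnessing $(P,Q)$ used the edge $e$, that system would still witness $(P,Q)$ after $e$ is removed; so some path $\alpha$ uses $e=pq$, and since $\alpha$ meets no boundary vertex except at its ends while $p$ and $q$ are both boundary vertices, $\alpha$ runs from $p$ to $q$ and, being a simple path through the edge $pq$, is equal to $e$. After relabeling, $p\in P$ and $q\in Q$, and the remaining $k-1$ paths — disjoint from $p$, $q$, and $e$ — exhibit $(P\setminus\{p\};Q\setminus\{q\})$ as a connection that still avoids $b$, since $b\notin P\cup Q$. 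I would treat the boundary-edge case in full; for a boundary spike $e=pq$ with $p$ a degree-one boundary vertex, the degree-one end forces $q\neq b$ here, and the argument is the mirror image run after contracting $e$, using the standard duality under which contracting a boundary spike corresponds to deleting a boundary edge.

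Assume then $e=pq$ is a boundary edge, so that $p,q$ both lie in the block $A$ of the Kirchhoff matrix indexed by boundary vertices. Deleting $e$ removes exactly its contribution $c_e(\mathbf e_p-\mathbf e_q)(\mathbf e_p-\mathbf e_q)^{\mathsf T}$ to that block and touches no other block, so the Schur complement satisfies
\[
\Lambda(\Gamma)=\Lambda(\Gamma\setminus e)+c_e\,(\mathbf e_p-\mathbf e_q)(\mathbf e_p-\mathbf e_q)^{\mathsf T}.
\]
Restricting to rows $P$ and columns $Q$ and using $q\notin P$, $p\notin Q$, the correction becomes the rank-one term $-c_e\,\mathbf e_p\mathbf e_q^{\mathsf T}$, and expanding the determinant along it (matrix determinant lemma, i.e.\ cofactor expansion in row $p$) yields
\[
\det\Lambda(\Gamma)(P;Q)=\det\Lambda(\Gamma\setminus e)(P;Q)\ \pm\ c_e\,\det\Lambda(\Gamma\setminus e)\bigl(P\setminus\{p\};Q\setminus\{q\}\bigr).
\]
Two facts collapse the right-hand side, both instances of the connection--minor correspondence for $b$-avoiding pairs stated below. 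Since deleting $e$ breaks $(P,Q)$, the pair $(P,Q)$ — which avoids $b$ — is not a connection in $\Gamma\setminus e$, so $\det\Lambda(\Gamma\setminus e)(P;Q)=0$; and since $(P\setminus\{p\};Q\setminus\{q\})$ is a connection avoiding $b$, that same correspondence gives $\det\Lambda(\Gamma)(P\setminus\{p\};Q\setminus\{q\})\neq0$, which moreover equals $\det\Lambda(\Gamma\setminus e)(P\setminus\{p\};Q\setminus\{q\})$ because the rank-one update alters no entry outside rows and columns $\{p,q\}$. Hence $\det\Lambda(\Gamma)(P;Q)=\pm c_e\,\det\Lambda(\Gamma)(P\setminus\{p\};Q\setminus\{q\})$ with nonzero denominator, and $c_e$ is read off from $\Lambda(\Gamma)$ as the signed ratio of these two minors.

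The crux, and the main obstacle, is therefore to establish this $b$-avoiding connection--minor correspondence: for an rnpd $\Gamma$ and $R,S\subseteq B\setminus\{b\}$ arranged as in the definition of a connection, $(R;S)$ is a connection iff $(-1)^{|R|}\det\Lambda(\Gamma)(R;S)>0$, and $\det\Lambda(\Gamma)(R;S)=0$ otherwise. The plan is a Lindström--Gessel--Viennot / grove expansion of the minor, as in the proof of Theorem~4.2 of~\cite{curtis_ingerman_morrow}. The delicate point is the punctured-disk geometry: I must check that excluding $b$ from $R\cup S$ already rules out the pathologies a puncture could introduce, namely that every grove contributing to the minor realizes one and the same non-crossing pairing of $R\cup S$ — a disjoint, planar system of $R$-to-$S$ paths cannot wind around the hole — so the contributing groves all carry the sign $(-1)^{|R|}$. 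With positive conductances on all edges and at least one such grove present whenever $(R;S)$ is a connection, the signed sum is strictly of that sign, hence nonzero, while if $(R;S)$ is not a connection there are no contributing groves and the minor vanishes; this completes the recovery.
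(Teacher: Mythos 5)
Your proposal is correct and follows the same route as the paper, whose entire proof is a citation of Corollaries 4.3 and 4.4 of Curtis--Ingerman--Morrow together with the bare assertion that the argument survives once $P$ and $Q$ avoid $b$; you have essentially reconstructed the content of those corollaries and, valuably, made explicit the one point the paper leaves unsaid, namely that the determinant--connection correspondence still holds for $b$-avoiding $P,Q$ because the Schur-complement walks never pass through the boundary vertex $b$ and disjoint paths between outer boundary points cannot realize a non-identity pairing even in a punctured disk. One caveat: the boundary-spike case is not literally the ``mirror image'' of a rank-one update --- contracting a spike changes $\Lambda$ by a shunt-type (nonlinear in $c_e$) formula, and the connection broken by contraction typically has $p \notin P \cup Q$ rather than $p \in P$ --- but the needed identity is exactly CIM's Corollary 4.4 and rests on the same $b$-avoiding correspondence, so nothing essential is missing.
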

\begin{proof}
Corollaries 4.3 and 4.4 of \cite{curtis_ingerman_morrow} prove this for the cprn case. However, this remains true for rndps if we restrict $P$ and $Q$ to not include the interior boundary vertex.
\end{proof}

\begin{defn}
A \textit{star graph} is a central boundary vertex with only boundary spikes or pendants attached. \textit{Inserting a star graph into a face of a graph} means adding a boundary vertex inside the face and connecting it to any number of vertices on the face.
\end{defn}

\begin{ex}
Consider the following graph:
\begin{center}
\begin{tikzpicture}[scale=0.8]
\draw [line width = 0.25mm, fill = black] (1, 2)--(2,0);
\draw [line width = 0.25mm, fill = black] (-1, 2)--(-2,0);
\draw [line width = 0.25mm, fill = black] (-1, 2)--(1,2);
\draw [line width = 0.25mm, fill = black] (0, -2)--(2,0);
\draw [line width = 0.25mm, fill = black] (0, -2)--(-2,0);
\draw [line width = 0.25mm, fill = black]  (2,0)--(-2,0) ;
\draw [line width=0.25mm, fill=black] (1, 2) circle (1.5mm);
\draw [line width=0.25mm, fill=black] (-1, 2) circle (1.5mm);
\draw [line width=0.25mm, fill=black] (0, -2) circle (1.5mm);
\draw [line width=0.25mm, fill=white] (-2,0) circle (1.5mm);
\draw [line width=0.25mm, fill=white] (2,0) circle (1.5mm);
\end{tikzpicture}
\end{center}

One way to insert a star graph into the upper face is illustrated below:
\begin{center}
\begin{tikzpicture}[scale=0.8]
\draw [line width = 0.25mm, fill = black] (0,1)--(-2,0);
\draw [line width = 0.25mm, fill = black] (0,1)--(-1,2);
\draw [line width = 0.25mm, fill = black] (0,1)--(1,2);
\draw [line width = 0.25mm, fill = black] (1, 2)--(2,0);
\draw [line width = 0.25mm, fill = black] (-1, 2)--(-2,0);
\draw [line width = 0.25mm, fill = black] (-1, 2)--(1,2);
\draw [line width = 0.25mm, fill = black] (0, -2)--(2,0);
\draw [line width = 0.25mm, fill = black] (0, -2)--(-2,0);
\draw [line width = 0.25mm, fill = black]  (2,0)--(-2,0) ;
\draw [line width=0.25mm, fill=black] (1, 2) circle (1.5mm);
\draw [line width=0.25mm, fill=black] (-1, 2) circle (1.5mm);
\draw [line width=0.25mm, fill=black] (0, -2) circle (1.5mm);
\draw [line width=0.25mm, fill=white] (-2,0) circle (1.5mm);
\draw [line width=0.25mm, fill=white] (2,0) circle (1.5mm);
\draw [line width=0.25mm, fill=white] (0,1) circle (1.5mm);
\end{tikzpicture}
\end{center}
\end{ex}

\begin{thm}
Assume $\Gamma$ is a critical cprn. If $\Gamma'$ be the rnpd obtained from inserting a star graph into any face of $\Gamma$, then $\Gamma'$ is recoverable.
\end{thm}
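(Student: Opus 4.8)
The plan is to induct on the number of edges of the critical cprn $\Gamma$. At each stage we peel off a boundary edge or a boundary spike of $\Gamma$, read its conductance off $\Lambda(\Gamma')$, and reduce to a smaller instance of the same shape.

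The key preliminary observation is that inserting a star does not create any relevant new connections: since the disjoint paths witnessing a connection may not pass through boundary vertices and every star edge is incident to the boundary vertex $b$, a tuple $(P,Q)$ with $b\notin P\cup Q$ is a connection of $\Gamma'$ if and only if it is a connection of $\Gamma$, and removing or contracting an edge $e$ of $\Gamma$ breaks such a connection in $\Gamma'$ exactly when it does so in $\Gamma$ (placing $b$ inside a face does not disturb the cyclic order of the original boundary vertices). Now for the inductive step: Lemma~\ref{alwaysedgespike} supplies a boundary edge or boundary spike $e=pq$ of $\Gamma$, and criticality of $\Gamma$ means that removing $e$ breaks some connection $(P,Q)$ of $\Gamma$; since $P$ and $Q$ consist of boundary vertices of $\Gamma$ and $b$ is not a vertex of $\Gamma$, automatically $b\notin P\cup Q$. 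By the observation the same connection is broken in $\Gamma'$, so Lemma~\ref{weknowconductance} recovers $c_e$ from $\Lambda(\Gamma')$. Let $\Gamma_1$ be $\Gamma\setminus e$ if $e$ is a boundary edge and $\Gamma/e$ if $e$ is a boundary spike; by Lemma~\ref{stillcriticaledge} or Lemma~\ref{stillcriticalspike}, $\Gamma_1$ is a critical cprn with strictly fewer edges. Let $\Gamma'_1$ be the corresponding reduction of $\Gamma'$; its response matrix $\Lambda(\Gamma'_1)$ is determined by $\Lambda(\Gamma')$ and $c_e$ through the standard formula for deleting a boundary edge or contracting a boundary spike. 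If $\Gamma'_1$ is again a critical cprn with a star inserted into one of its faces, the inductive hypothesis finishes the proof: $\Gamma'_1$ is recoverable, hence every conductance of $\Gamma'$ except $c_e$ is pinned down, and $c_e$ is already known. The base case is $\Gamma$ a single vertex, where $\Gamma'$ consists of that vertex joined to $b$ by one edge and the conductance is read directly from the $2\times2$ response matrix.

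The step I expect to be the main obstacle is checking that $\Gamma'_1$ remains in the correct class after each reduction. Contracting a boundary spike never merges two faces of $\Gamma$, so the star's face persists and this case is routine. Deleting a boundary edge $e$, on the other hand, merges the outer face with the face $F_e$ bordering $e$ on the inside; if the star had been inserted into $F_e$, then after the reduction the star lies in the outer face and $\Gamma'_1$ is actually a cprn (a degenerate rnpd). One then needs that a critical cprn with a star inserted into its outer face is recoverable. The natural route is to prove such a graph is itself critical and invoke the equivalence of criticality and recoverability for cprns: the old edges remain critical because the connections of $\Gamma$ survive, and a star edge $bv_i$, when deleted, breaks the connection $(\{b\},\{v_i\})$ because every path leaving $b$ immediately passes through a boundary vertex --- at least when the remaining star endpoints are boundary vertices. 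The genuinely delicate point is the situation in which some star endpoint $v_i$ is an internal vertex of $\Gamma$ lying on the merged outer boundary; there one continues peeling boundary edges and spikes of $\Gamma_1$ (their broken connections still avoid $b$, so Lemma~\ref{weknowconductance} continues to apply) until the configuration is reduced to a genuinely critical cprn or to the base case. Some care is likewise required in degenerate situations --- a reduction disconnecting $\Gamma$, or a contraction identifying two star endpoints and creating parallel edges at $b$ --- which is why the bookkeeping should be organized around $\Gamma'$ itself rather than around $\Gamma$.
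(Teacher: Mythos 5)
Your proposal is essentially the paper's own argument: peel off a boundary edge or spike of the underlying critical cprn (Lemma~\ref{alwaysedgespike}), observe that the broken connection avoids $b$ so Lemma~\ref{weknowconductance} yields its conductance from $\Lambda(\Gamma')$, update the response matrix, and keep the peeled cprn critical via Lemmas~\ref{stillcriticaledge} and~\ref{stillcriticalspike}. The paper runs this as a single iteration rather than an induction, so it never needs the intermediate network to stay in the class ``critical cprn with a star in an interior face''; it simply peels until only the star remains and reads its conductances from the remaining response matrix, which then equals the Kirchhoff matrix, so the class-preservation and outer-face worries in your final paragraph do not arise.
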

\begin{proof}
Let $\Gamma' = (V, E)$ and $S=(V_S,E_S)$ be the star graph that was inserted into $\Gamma$ to obtain $\Gamma'$.
Assume we are given $\Lambda(\Gamma')$. First, note that $(V \setminus\{b\}, E \setminus E_S) = \Gamma$, which is known to be critical. So, by Lemma~\ref{alwaysedgespike}, we always have a boundary edge or boundary spike with which to begin the following process. 
First, pick a boundary edge or boundary spike, $e$, to delete or contract. Notice that deleting or contracting $e$ must break a connection $(P, Q)$ in $\Gamma$ by the definition of critical. Since paths in connections cannot pass through other boundary vertices, adding back in $S$ does not repair the broken connection, and $(P, Q)$ is broken in $\Gamma'$. Thus, by Lemma~\ref{weknowconductance}, we know the conductance of $e$. Knowing this allows us to derive the response matrix of the graph after contracting or deleting $e$, which we will call $\Gamma''$ (see section 8 of \cite{curtis_ingerman_morrow}). Now, by Lemmas~\ref{stillcriticaledge} and~\ref{stillcriticalspike}, our $\Gamma''$ without $S$ is still critical. Then, we can continue this process, deleting and contracting boundary edges and spikes one by one, until we are left with $S$. In other words, we will eventually know the conductance of every edge in $\Gamma'$ except for those in $G$, but will know the response matrix of $G$. However, at that point, $S$ will be a cprn with only boundary vertices, and hence its response matrix is exactly its Kirchhoff matrix. Therefore, $\Gamma'$ is recoverable.
\end{proof}

\begin{ex}
Note the $\Xi_n$ is $\Pi_n$ with a star graph inserted into a face.  So, all the spider graphs are recoverable.
\end{ex}

\begin{ex}
By Theorem~\ref{thm:same-z-seq}, all irreducible graphs that have the same z-sequence as a spider graph are recoverable.
\end{ex}

\subsection{Necessary Condition for Recoverablilty of Rnpds}
Before stating the necessary condition, we first need to define the following algorithm for obtaining a cprn from a certain type of rnpd. Suppose we're given an rnpd with the interior boundary vertex $b$.\\

\textbf{Algorithm 1.} 
\begin{enumerate}
    \item Turn all the neighbors of $b$ into boundary vertices.
    \item Remove $b$ from the graph (with all the edges incident to it).
    \item If the resulting graph is a cprn, stop.
    \item Otherwise, apply the above steps to each boundary vertex (in any order) that can't be placed on the disk until it results in a cprn. 
\end{enumerate}

Recall that in any rnpd, the interior boundary vertex has to be inside a polygon (one of the faces of a cprn).

\begin{thm}
Assume $\Gamma'$ is an rnpd where the internal boundary vertex is the center of a star graph in the interior of some face $f$ of the cprn $\Gamma$. Apply Algorithm 1 iteratively starting at the internal boundary vertex, and call the resulting cprn $\Gamma''$. Then $\Gamma''$ is critical if $\Gamma'$ is recoverable.
\end{thm}
\begin{proof}
Assume $\Gamma''$ is not critical. Then the number of edges can be reduced. Note that $\Gamma''$ is isomorphic to a subgraph of $\Gamma'$, but not necessarily to a subnetwork (not respecting the type of vertices). So $\Gamma''$ may have more boundary vertices. Having more boundary vertices may only decrease the number of possible moves. This implies that the number of edges in $\Gamma'$ could be reduced to begin with, and thus $\Gamma'$ is not recoverable.
\end{proof}

\section*{Acknowledgements}
This research was carried out as part of the 2018 REU program at the School of Mathematics at University
of Minnesota, Twin Cities. The authors are grateful for the support
of NSF RTG grant DMS-1745638. They would also like to thank Sunita Chepuri and Pavlo Pylyavskyy for their guidance and
encouragement, Eric Stucky for his many helpful comments on the manuscript, and Alexandra Embry, Sylvia Frank, and John O'Brien for their insights.

\appendix
\titleformat{\section}{\Large\bfseries}{Appendix \thesection}{1em}{}
\section{Formulas for Conditional Local Moves}\label{app:moves-formulas}

Here we list the conditional local moves with formulas.

\begin{itemize}
    \item Triangle Conditional Local Move
    \begin{center}
    \begin{tikzpicture}
    \path[-, line width=0.4mm]
    (0,0) edge node[above]{$b$} (1.5,1)
    (1.5,2.35) edge node[right]{$c$} (1.5,1)
    (3,0) edge node[above]{$a$} (1.5,1)
    (0,0) edge node[left]{$e$} (1.5,2.5)
    (1.5,2.5) edge node[right]{$f$} (3,0)
    (3,0) edge node[below]{$d$} (0,0);
    \draw [line width=0.25mm, fill=gray!60] (0,0) circle (1.5mm);
    \draw [line width=0.25mm, fill=gray!60] (1.5,2.5) circle (1.5mm);
    \draw [line width=0.25mm, fill=gray!60] (3,0) circle (1.5mm);
    \draw [line width=0.25mm, fill=white] (1.5,1) circle (1.5mm);
    \node at (-0.5,1) {\phantom{1}};
    \node at (4.5,1) {\LARGE$\leftrightarrow$};
    \path[-, line width=0.4mm]
    (6.5,0) edge node[above]{$B$} (8,1)
    (8,2.35) edge node[right]{$C$} (8,1)
    (9.5,0) edge node[above]{$A$} (8,1)
    (6.5,0) edge node[left]{$E$} (8,2.5)
    (5,-1) edge node[above]{$F$} (6.5,0)
    (9.5,0) edge node[below]{$D$} (6.5,0);
    \draw [line width=0.25mm, fill=gray!60] (5,-1) circle (1.5mm);
    \draw [line width=0.25mm, fill=black] (6.5,0) circle (1.5mm);
    \draw [line width=0.25mm, fill=gray!60] (8,2.5) circle (1.5mm);
    \draw [line width=0.25mm, fill=gray!60] (9.5,0) circle (1.5mm);
    \draw [line width=0.25mm, fill=white] (8,1) circle (1.5mm);
    \end{tikzpicture}
    \end{center}
    \begin{tabular}{l @{\hspace{0.75in}}l}
    $\displaystyle a=\frac{AB+AD+AE+AF+BD}{B+D+E+F}$ & $\displaystyle A=\frac{ea-bf}{e}$\\
    $\displaystyle b=\frac{BF}{B+D+E+F}$ & $\displaystyle B=\frac{b(bf+de+df+ef)}{de}$\\
    $\displaystyle c=\frac{BC+BE+CD+CE+CF}{B+D+E+F}$ & $\displaystyle C=\frac{cd-bf}{d}$\\
    $\displaystyle d=\frac{DF}{B+D+E+F}$ & $\displaystyle D=\frac{bf+de+df+fe}{e}$\\
    $\displaystyle e=\frac{EF}{B+D+E+F}$ & $\displaystyle E=\frac{bf+de+df+fe}{d}$\\
    $\displaystyle f=\frac{DE}{B+D+E+F}$ & $\displaystyle F=\frac{bf+de+df+fe}{f}$
    \end{tabular}
    \item Square Conditional Local Move
    \begin{center}
    \begin{tikzpicture}
    \path[-, line width=0.4mm]
    (0,0) edge node[above]{$b$} (1,1)
    (0,2) edge node[right]{$c$} (1,1)
    (2,0) edge node[left]{$a$} (1,1)
    (0,0) edge node[below]{$e$} (2,0)
    (0,0) edge node[left]{$f$} (0,2)
    (2,2) edge node[below]{$d$} (1,1)
    (0,2) edge node[above]{$g$} (2,2)
    (2,2) edge node[right]{$h$} (2,0)
    (-1,-1) edge node[below]{$i$} (0,0);
    \draw [line width=0.25mm, fill=black] (0,0) circle (1.5mm);
    \draw [line width=0.25mm, fill=gray!60] (0,2) circle (1.5mm);
    \draw [line width=0.25mm, fill=gray!60] (2,0) circle (1.5mm);
    \draw [line width=0.25mm, fill=gray!60] (2,2) circle (1.5mm);
    \draw [line width=0.25mm, fill=white] (1,1) circle (1.5mm);
    \draw [line width=0.25mm, fill=gray!60] (-1,-1) circle (1.5mm);
    \node at (-0.5,1) {\phantom{1}};
    \node at (3.5,1) {\LARGE$\leftrightarrow$};
    \path[-, line width=0.4mm]
    (5,0) edge node[above]{$B$} (6,1)
    (5,2) edge node[right]{$C$} (6,1)
    (7,0) edge node[left]{$A$} (6,1)
    (5,0) edge node[below]{$E$} (7,0)
    (5,0) edge node[left]{$F$} (5,2)
    (7,2) edge node[below]{$D$} (6,1)
    (5,2) edge node[above]{$G$} (7,2)
    (7,2) edge node[right]{$H$} (7,0)
    (8,3) edge node[below]{$I$} (7,2);
    \draw [line width=0.25mm, fill=gray!60] (5,0) circle (1.5mm);
    \draw [line width=0.25mm, fill=gray!60] (5,2) circle (1.5mm);
    \draw [line width=0.25mm, fill=gray!60] (7,0) circle (1.5mm);
    \draw [line width=0.25mm, fill=black] (7,2) circle (1.5mm);
    \draw [line width=0.25mm, fill=white] (6,1) circle (1.5mm);
    \draw [line width=0.25mm, fill=gray!60] (8,3) circle (1.5mm);
    \end{tikzpicture}
    \end{center}
    In this case the move is symmetric, so we only include formulas for one direction.
    
    \vspace{0.05in}
    $\displaystyle A=\frac{abg+aeg+afg+agi+beg-def}{g(b+e+f+i)}$
    
    \vspace{0.05in}
    $\displaystyle B=\frac{bi}{b+e+f+i}$
    
    \vspace{0.05in}
    $\displaystyle C=\frac{bch+bfh+ceh+cfh+chi-def}{h(b+e+f+i)}$
    
    \vspace{0.05in}
    $\displaystyle D=\frac{d(bgh+def+efg+efh+egh+fgh+ghi)}{gh(b+e+f+i)}$
    
    \vspace{0.05in}
    $\displaystyle E=\frac{ei}{b+e+f+i}$
    
    \vspace{0.05in}
    $\displaystyle F=\frac{fi}{b+e+f+i}$
    
    \vspace{0.05in}
    $\displaystyle G=\frac{bgh+def+efg+efh+egh+fgh+ghi}{h(b+e+f+i)}$
    
    \vspace{0.05in}
    $\displaystyle H=\frac{bgh+def+efg+efh+egh+fgh+ghi}{g(b+e+f+i)}$
    
    \vspace{0.05in}
    $\displaystyle I=\frac{bgh+def+efg+efh+egh+fgh+ghi}{ef}$
    
    \item Pentagon Conditional Local Move
    \begin{center}
    \begin{tikzpicture}
    \path[-, line width=0.4mm]
    (0,0) edge node[right]{$a$} (0.882,-1.213)
    (0,0) edge node[right]{$b$} (-0.882,-1.213)
    (0,0) edge node[below]{$c$} (-1.427,0.464)
    (0,0) edge node[left]{$d$} (0,1.5)
    (0,0) edge node[above]{$e$} (1.427,0.464)
    (0.882,-1.213) edge node[below]{$f$} (-0.882,-1.213)
    (-0.882,-1.213) edge node[left]{$g$} (-1.427,0.464)
    (-1.427,0.464) edge node[above]{$h$} (0,1.5)
    (0,1.5) edge node[above]{$i$} (1.427,0.464)
    (1.427,0.464) edge node[right]{$j$} (0.882,-1.213)
    (1.764,-2.426) edge node[right]{$k$} (0.882,-1.213)
    (-1.764,-2.426) edge node[left]{$\ell$} (-0.882,-1.213)
    (0,3) edge node[right]{$m$} (0,1.5);
    \draw [line width=0.25mm, fill=black] (0,1.5) circle (1.5mm);
    \draw [line width=0.25mm, fill=black] (0.882,-1.213) circle (1.5mm);
    \draw [line width=0.25mm, fill=black] (-0.882,-1.213) circle (1.5mm);
    \draw [line width=0.25mm, fill=gray!60] (1.427,0.464) circle (1.5mm);
    \draw [line width=0.25mm, fill=gray!60] (-1.427,0.464) circle (1.5mm);
    \draw [line width=0.25mm, fill=white] (0,0) circle (1.5mm);
    \draw [line width=0.25mm, fill=gray!60] (0,3) circle (1.5mm);
    \draw [line width=0.25mm, fill=gray!60] (1.764,-2.426) circle (1.5mm);
    \draw [line width=0.25mm, fill=gray!60] (-1.764,-2.426) circle (1.5mm);
    \node at (3.5,0) {\LARGE$\leftrightarrow$};
    \path[-, line width=0.4mm]
    (7,0) edge node[right]{$A$} (7.882,-1.213)
    (7,0) edge node[right]{$B$} (6.118,-1.213)
    (7,0) edge node[below]{$C$} (5.573,0.464)
    (7,0) edge node[left]{$D$} (7,1.5)
    (7,0) edge node[above]{$E$} (8.427,0.464)
    (7.882,-1.213) edge node[below]{$F$} (6.118,-1.213)
    (6.118,-1.213) edge node[left]{$G$} (5.573,0.464)
    (5.573,0.464) edge node[above]{$H$} (7,1.5)
    (7,1.5) edge node[above]{$I$} (8.427,0.464)
    (8.427,0.464) edge node[right]{$J$} (7.882,-1.213)
    (8.764,-2.426) edge node[right]{$K$} (7.882,-1.213)
    (5.236,-2.426) edge node[left]{$L$} (6.118,-1.213)
    (8.764,-2.426) edge node[below]{$M$} (5.236,-2.426);
    \draw [line width=0.25mm, fill=gray!60] (7,1.5) circle (1.5mm);
    \draw [line width=0.25mm, fill=black] (7.882,-1.213) circle (1.5mm);
    \draw [line width=0.25mm, fill=black] (6.118,-1.213) circle (1.5mm);
    \draw [line width=0.25mm, fill=gray!60] (8.427,0.464) circle (1.5mm);
    \draw [line width=0.25mm, fill=gray!60] (5.573,0.464) circle (1.5mm);
    \draw [line width=0.25mm, fill=white] (7,0) circle (1.5mm);
    \draw [line width=0.25mm, fill=gray!60] (8.764,-2.426) circle (1.5mm);
    \draw [line width=0.25mm, fill=gray!60] (5.236,-2.426) circle (1.5mm);
    \end{tikzpicture}
    \end{center}
    Let $\alpha=ABM+AFM+AGM+ALM+BFM+BJM+BKM+FGM+FJM+FKL+FKM+FLM+GJM+GKM+JLM+KLM, \beta=FKL+FKM+FLM+KLM, \gamma=fgj(d+h+i+m)$, and $\delta=abhi+afhi+aghi+bfhi+bhij+fghi+fhij+ghij$.
    
    \vspace{0.05in}
    $\displaystyle a=\frac{A(\beta+BKM+GKM)}{\alpha}$
    
    \vspace{0.05in}
    $\displaystyle b=\frac{B(\beta+ALM+JLM)}{\alpha}$
    
    \vspace{0.05in}
    $\displaystyle c=C+\frac{ABGIM+AFGIM+BFGIM+BGIJM+BGIKM-DFGJM}{I\alpha}$
    
    \vspace{0.05in}
    $\displaystyle d=D+\frac{D(DFGJM+HFGJM+IFGJM)}{IH\alpha}$
    
    \vspace{0.05in}
    $\displaystyle e=E+\frac{ABHJM+AFHJM+AGHJM+AHJLM+BFHJM-DFGJM}{H\alpha}$
    
    \vspace{0.05in}
    $\displaystyle f=\frac{(\beta+BKM+GKM)(\beta+ALM+JLM)}{LK\alpha}$
    
    \vspace{0.05in}
    $\displaystyle g=\frac{G(\beta+ALM+JLM)}{\alpha}$
    
    \vspace{0.05in}
    $\displaystyle h=H+\frac{DFGJM+HFGJM+IFGJM}{I\alpha}$
    
    \vspace{0.05in}
    $\displaystyle i=I+\frac{DFGJM+HFGJM+IFGJM}{H\alpha}$
    
    \vspace{0.05in}
    $\displaystyle j=\frac{J(\beta+BKM+GKM)}{\alpha}$
    
    \vspace{0.05in}
    $\displaystyle k=\frac{\beta+BKM+GKM}{FL}$

    \vspace{0.05in}
    $\displaystyle \ell=\frac{\beta+ALM+JLM}{KF}$
    
    \vspace{0.05in}
    $\displaystyle m=\frac{HI\alpha+DFGJM+HFGJM+IFGJM}{FGJM}$
    
    \vspace{0.05in}
    $\displaystyle A=a+\frac{a(\delta+ahi\ell+hij\ell)}{\gamma}$
    
    \vspace{0.05in}
    $\displaystyle B=b+\frac{b(\delta+bhik+ghik)}{\gamma}$
    
    \vspace{0.05in}
    $\displaystyle C=c+\frac{g(dfhj-abhi-afhi-bfhi-bhij-ihbk)}{\gamma}$
    
    \vspace{0.05in}
    $\displaystyle D=\frac{dfgjm}{\gamma}$
    
    \vspace{0.05in}
    $\displaystyle E=e+\frac{j(dfgi-abhi-afhi-aghi-ahi\ell-bfhi)}{\gamma}$
    
    \vspace{0.05in}
    $\displaystyle F=\frac{f(\gamma+\delta+ahi\ell+hij\ell)(\gamma+\delta+bhik+ghik)}{\gamma(\gamma+\delta+ahi\ell+bhik+fhik+fhi\ell+ghik+hij\ell+hik\ell)}$
    
    \vspace{0.05in}
    $\displaystyle G=g+\frac{g(\delta+bhik+ghik)}{\gamma}$
    
    \vspace{0.05in}
    $\displaystyle H=\frac{fghjm}{\gamma}$
    
    \vspace{0.05in}
    $\displaystyle I=\frac{fgijm}{\gamma}$
    
    \vspace{0.05in}
    $\displaystyle J=j+\frac{j(\delta+ahi\ell+hij\ell)}{\gamma}$
    
    \vspace{0.05in}
    $\displaystyle K=\frac{k(\gamma+\delta+ahi\ell+hij\ell)}{\gamma+\delta+ahi\ell+bhik+fhik+fhi\ell+ghik+hij\ell+hik\ell}$
    
    \vspace{0.05in}
    $\displaystyle L=\frac{\ell(\delta+bhik+ghik)}{\gamma+\delta+ahi\ell+bhik+fhik+fhi\ell+ghik+hij\ell+hik\ell}$
    
    \vspace{0.05in}
    $\displaystyle M=\frac{ghik\ell}{\gamma+\delta+ahi\ell+bhik+fhik+fhi\ell+ghik+hij\ell+hik\ell}$
    
    \item Conjectured Hexagon Conditional Local Move
    \begin{center}
    \begin{tikzpicture}
    \path[-, line width=0.4mm]
    (0,0) edge node[right]{$a$} (0.75,-1.3)
    (0,0) edge node[right]{$b$} (-0.75,-1.3)
    (0,0) edge node[below]{$c$} (-1.5,0)
    (0,0) edge node[left]{$d$} (-0.75,1.3)
    (0,0) edge node[left]{$e$} (0.75,1.3)
    (0,0) edge node[above]{$f$} (1.5,0)
    (0.75,-1.3) edge node[below]{$g$} (-0.75,-1.3)
    (-0.75,-1.3) edge node[left]{$h$} (-1.5,0)
    (-1.5,0) edge node[left]{$i$} (-0.75,1.3)
    (-0.75,1.3) edge node[above]{$j$} (0.75,1.3)
    (0.75,1.3) edge node[right]{$k$} (1.5,0)
    (1.5,0) edge node[right]{$\ell$} (0.75,-1.3)
    (0.75,-1.3) edge node[right]{$m$} (1.5,-2.6)
    (-0.75,-1.3) edge node[left]{$n$} (-1.5,-2.6)
    (-0.75,1.3) edge node[left]{$o$} (-1.5,2.6)
    (0.75,1.3) edge node[right]{$p$} (1.5,2.6)
    (-1.5,-2.6) edge node[below]{$q$} (1.5,-2.6);
    \draw [line width=0.25mm, fill=black] (0.75,1.3) circle (1.5mm);
    \draw [line width=0.25mm, fill=black] (0.75,-1.3) circle (1.5mm);
    \draw [line width=0.25mm, fill=black] (-0.75,1.3) circle (1.5mm);
    \draw [line width=0.25mm, fill=black] (-0.75,-1.3) circle (1.5mm);
    \draw [line width=0.25mm, fill=gray!60] (1.5,2.6) circle (1.5mm);
    \draw [line width=0.25mm, fill=gray!60] (1.5,-2.6) circle (1.5mm);
    \draw [line width=0.25mm, fill=gray!60] (-1.5,2.6) circle (1.5mm);
    \draw [line width=0.25mm, fill=gray!60] (-1.5,-2.6) circle (1.5mm);
    \draw [line width=0.25mm, fill=gray!60] (-1.5,0) circle (1.5mm);
    \draw [line width=0.25mm, fill=white] (0,0) circle (1.5mm);
    \draw [line width=0.25mm, fill=gray!60] (1.5,0) circle (1.5mm);
    \node at (3.5,0) {\LARGE$\leftrightarrow$};
    \path[-, line width=0.4mm]
    (7,0) edge node[right]{$A$} (7.75,-1.3)
    (7,0) edge node[right]{$B$} (6.25,-1.3)
    (7,0) edge node[below]{$C$} (5.5,0)
    (7,0) edge node[left]{$D$} (6.25,1.3)
    (7,0) edge node[left]{$E$} (7.75,1.3)
    (7,0) edge node[above]{$F$} (8.5,0)
    (7.75,-1.3) edge node[below]{$G$} (6.25,-1.3)
    (6.25,-1.3) edge node[left]{$H$} (5.5,0)
    (5.5,0) edge node[left]{$I$} (6.25,1.3)
    (6.25,1.3) edge node[above]{$J$} (7.75,1.3)
    (7.75,1.3) edge node[right]{$K$} (8.5,0)
    (8.5,0) edge node[right]{$L$} (7.75,-1.3)
    (7.75,-1.3) edge node[right]{$M$} (8.5,-2.6)
    (6.25,-1.3) edge node[left]{$N$} (5.5,-2.6)
    (6.25,1.3) edge node[left]{$O$} (5.5,2.6)
    (7.75,1.3) edge node[right]{$P$} (8.5,2.6)
    (5.5,2.6) edge node[above]{$Q$} (8.5,2.6);
    \draw [line width=0.25mm, fill=black] (7.75,1.3) circle (1.5mm);
    \draw [line width=0.25mm, fill=black] (7.75,-1.3) circle (1.5mm);
    \draw [line width=0.25mm, fill=black] (6.25,1.3) circle (1.5mm);
    \draw [line width=0.25mm, fill=black] (6.25,-1.3) circle (1.5mm);
    \draw [line width=0.25mm, fill=gray!60] (8.5,2.6) circle (1.5mm);
    \draw [line width=0.25mm, fill=gray!60] (8.5,-2.6) circle (1.5mm);
    \draw [line width=0.25mm, fill=gray!60] (5.5,2.6) circle (1.5mm);
    \draw [line width=0.25mm, fill=gray!60] (5.5,-2.6) circle (1.5mm);
    \draw [line width=0.25mm, fill=gray!60] (5.5,0) circle (1.5mm);
    \draw [line width=0.25mm, fill=white] (7,0) circle (1.5mm);
    \draw [line width=0.25mm, fill=gray!60] (8.5,0) circle (1.5mm);
    \end{tikzpicture}
    \end{center}
    In this case the move is symmetric, so we only include formulas for one direction.
    
    Let $\alpha=gmn+gmq+gnq+mnq, \beta=abq+agq+ahq+anq+bgq+b\ell q+bmq+ghq+g\ell q+h\ell q+hmq+\ell nq, \gamma=degh\ell q+dghj\ell q+dghk\ell q+eghi\ell q+eghj\ell q+ghij\ell q+ghik\ell q+ghijk\ell q$, and $\delta=dgh\ell pq+ghi\ell pq+egh\ell oq+ghk\ell oq+ghj\ell oq+ghj\ell pq+ghlopq$.
    
    \vspace{0.05in}
    $\displaystyle A=\frac{a(\alpha+bmq+hmq)}{\alpha+\beta}$
    
    \vspace{0.05in}
    $\displaystyle B=\frac{b(\alpha+anq+\ell nq)}{\alpha+\beta}$
    
    \vspace{0.05in}
    $\displaystyle 
    C=c+\frac{hjk(abq+agq+bgq+b\ell q+bmq)-degh\ell q-dghj\ell q-dghk\ell q-dgh\ell pq-eghj\ell q}{jk(\alpha+\beta)}$
    
    \vspace{0.05in}
    $\displaystyle D=d+\frac{d(\gamma+dgh\ell pq+ghi\ell pq)}{ijk(\alpha+\beta)}$
    
    \vspace{0.05in}
    $\displaystyle E=e+\frac{e(\gamma+egh\ell oq+ghk\ell oq)}{ijk(\alpha+\beta)}$
    
    \vspace{0.05in}
    $\displaystyle F=f+\frac{ij\ell(abq+agq+ahq+anq+bgq)-degh\ell q-dghj\ell q-eghi\ell q-eghj\ell q-egh\ell oq}{ij(\alpha+\beta)}$
    
    \vspace{0.05in}
    $\displaystyle G=\frac{(\alpha+bmq+hmq)(\alpha+anq+\ell nq)}{mn(\alpha+\beta)}$
    
    \vspace{0.05in}
    $\displaystyle H=\frac{h(\alpha+anq+\ell nq)}{\alpha+\beta}$
    
    \vspace{0.05in}
    $\displaystyle I=i+\frac{\gamma+dgh\ell pq+ghi\ell pq}{jk(\alpha+\beta)}$
    
    \vspace{0.05in}
    $\displaystyle J=\frac{(ijk(\alpha+\beta)+\gamma+dgh\ell pq+ghi\ell pq)(ijk(\alpha+\beta)+\gamma+egh\ell oq+ghk\ell oq)}{ik(\alpha+\beta)(ijk(\alpha+\beta)+\gamma+\delta)}$
    
    \vspace{0.05in}
    $\displaystyle K=k+\frac{k(\gamma+egh\ell oq+ghk\ell oq)}{ijk(\alpha+\beta)}$
    
    \vspace{0.05in}
    $\displaystyle L=\frac{\ell(\alpha+bmq+hmq)}{\alpha+\beta}$
    
    \vspace{0.05in}
    $\displaystyle M=\frac{\alpha+bmq+hmq}{gn}$
    
    \vspace{0.05in}
    $\displaystyle N=\frac{\alpha+anq+\ell nq}{gm}$
    
    \vspace{0.05in}
    $\displaystyle O=\frac{o(ijk(\alpha+\beta)+\gamma+dgh\ell pq+ghi\ell pq)}{ijk(\alpha+\beta)+\gamma+\delta}$
    
    \vspace{0.05in}
    $\displaystyle P=\frac{p(ijk(\alpha+\beta)+\gamma+egh\ell oq+ghk\ell oq)}{ijk(\alpha+\beta)+\gamma+\delta}$
    
    \vspace{0.05in}
    $\displaystyle Q=\frac{hgj\ell opq}{ijk(\alpha+\beta)+\gamma+\delta}$
\end{itemize}

\bibliographystyle{amsplain}

\end{document}